\newtheorem{teorema}{Theorem}[section]
\newtheorem{lema}[teorema]{Lemma}
\newtheorem{prop}[teorema]{Proposition}
\theoremstyle{remark}
\newtheorem{afirmativa}{Claim}
\title[Quasilinear elliptic problems with cylindrical singularities]{Quasilinear elliptic problems with cylindrical singularities and multiple critical nonlinearities: existence, regularity, nonexistence}
\author[Assun\c{c}\~{a}o]{R.\@ B.\@ Assun\c{c}\~{a}o}
\address{R.\@ B.\@ Assun\c{c}\~{a}o \hfill\break\indent
Departamento de Matem\'{a}tica\,---\,Universidade Federal de Minas Gerais, UFMG\hfill\break\indent
Av.~Ant\^{o}nio Carlos, 6627\,---\,CEP 30161-970\,---\,Belo Horizonte, MG, Brasil}
\email[Corresponding author]{ronaldo@mat.ufmg.br}
\author[Santos]{W.\@ W.\@ dos Santos}
\thanks{W.\@ W.\@ dos Santos was partially supported by CAPES/Reuni.}
\address{W.\@ W.\@ dos Santos \hfill\break\indent
Departamento de Ciências Exatas e Biológicas \hfill\break\indent
Universidade Federal de S\~{a}o Jo\~{a}o del-Rei
\,---\, Campus Sete Lagoas, CSL \hfill\break\indent
Rodovia MG 424\,-\, Km 47\,---\,CEP 35701-970\,---\,Sete Lagoas, MG, Brasil}
\email{weler@ufsj.edu.br}
\author[Miyagaki]{O.\@ H.\@ Miyagaki}
\thanks{O.\@ H.\@ Miyagaki was partially supported by CNPq/Brasil and INCTMAT/Brasil.}
\address{O.\@ H.\@ Miyagaki \hfill\break\indent
Departamento de Matem\'{a}tica\,---\,Universidade Federal de Juiz de Fora, UFJF \hfill\break\indent
Cidade Universit\'{a}ria\,---\,CEP 36036-330\,---\,Juiz de Fora, MG, Brasil} 
\email{ohmiyagaki@gmail.com}
\date{Belo Horizonte, \today}
\keywords{%
Quasilinear elliptic equations, 
$p$-Laplacian operator,
variational methods, 
mountain pass theorem,
multiple nonlinearities,
existence, regularity and nonexistence results}
\subjclass[2010]{%
Primary:   %
35J20,     % Variational methods for second-order elliptic equations
35J62.	   % Quasilinear elliptic equations
Secondary: %
%35J10,    % Schrödinger operator [See also 35Pxx]
35B09,     % Positive solutions
35B38,     % Critical points
35B45.     % A priori estimates
35J92,     % Quasilinear elliptic equations with $p$-Laplacian
35J75.  	   % Singular elliptic equations
}
\begin{document}

\begin{abstract}
This work deals with existence of solutions for the 
class of quasilinear elliptic problems with 
cylindrical singularities 
and multiple critical nonlinearities
that can be written in the form
\begin{align*}
-\operatorname{div}\left[\frac{|\nabla u|^{p-2}}{|y|^{ap}}\nabla u\right]
-\mu\,\frac{u^{p-1}}{|y|^{p(a+1)}}
= \frac{u^{p^*(a,b)-1}}{|y|^{bp^*(a,b)}}
+ \frac{u^{p^*(a,c)-1}}{|y|^{cp^*(a,c)}}, 
\qquad (x,y) \in \mathbb{R}^{N-k}\times\mathbb{R}^k.
\end{align*}
We consider $N \geqslant 3$,
$1 \leqslant k \leqslant N$,
$1 < p < N$,
$\mu <\bar{\mu} \equiv \left\{[k-p(a+1)]/p\right\}^p$,
$ a < (k-p)/p$,
$a \leqslant b < c < a+1$, 
$p^*(a,b)=Np/[N-p(a+1-b)]$, 
and 
$p^*(a,c) \equiv Np/[N-p(a+1-c)]$;
in particular, if $\mu = 0$ we can include the cases 
$(k-p)/p \leqslant a < k(N-p)/Np$ and
$a < b< c<k(N-p(a+1))/p(N-k) < a+1$.
The existence of a positive, weak solution 
\( u \in \mathcal{D}_a^{1,p}(\mathbb{R}^N\backslash\{|y|=0\}) \) is proved with the help of the 
mountain pass theorem.
We also prove a regularity result, that is, using Moser's iteration scheme we show that
 \( u \in L\sb{\mathrm{loc}}\sp{\infty}(\Omega)\) 
 for domains 
 \( \Omega \subset \mathbb{R}\sp{N-k}\times\mathbb{R}\sp{k}\backslash \{ |y|=0 \} \) not necessarily bounded. 
Finally we show that if 
\( u \in \mathcal{D}_a^{1,p}(\mathbb{R}^N\backslash\{|y|=0\}) \)
is a weak solution to the related problem
\begin{align*}
-\operatorname{div}\left[\frac{|\nabla u|^{p-2}}{|y|^{ap}}\nabla u\right]
-\mu\,\frac{|u|^{p-2}u}{|y|^{p(a+1)}}
= \frac{|u|^{q-2}u}{|y|^{bp^*(a,b)}}
+ \frac{|u|^{p^*(a,c)-2}u}{|y|^{cp^*(a,c)}}, 
\qquad (x,y) \in \mathbb{R}^{N-k}\times\mathbb{R}^k,
\end{align*}
then \( u \equiv 0 \) when either
\( 1 < q < p^*(a,b) \), or 
\( q > p^*(a,b) \) and 
\( u \in L\sb{bp^*(a,b)/q, \operatorname{loc}}\sp{q}
(\mathbb{R}^N\backslash\{|y|=0\}) 
\cap L\sb{\mathrm{loc}}\sp{\infty}(\mathbb{R}\sp{N-k}\times \mathbb{R}\sp{k} \backslash \{ |y| = 0\})\).
This nonexistence of nontrivial solution is proved by using a Pohozaev-type identity.
\end{abstract}

\maketitle

\section{Introduction and main results}
\label{sec:intmain}

The main goal of this work is to prove existence
results for a class of quasilinear elliptic problems 
with cylindrical singularities and multiple critical 
nonlinearities that can be written in the form
\begin{align}
\label{problema}
-\operatorname{div}\left[\frac{|\nabla u|^{p-2}}{|y|^{ap}}\nabla u\right]
-\mu\, \frac{u^{p-1}}{|y|^{p(a+1)}}=\frac{u^{p^*(a,b)-1}}{|y|^{bp^*(a,b)}}+\frac{u^{p^*(a,c)-1}}{|y|^{cp^*(a,c)}}, \qquad (x,y) \in \mathbb{R}^{N-k}\times\mathbb{R}^k.
\end{align}
We consider $N \geqslant 3$,
$1 \leqslant k \leqslant N$,
$1 < p < N$,
$0 \leqslant \mu <\bar{\mu} \equiv \left\{[k-p(a+1)]/p\right\}^p$,
$0 \leqslant a < (k-p)/p$,
$a \leqslant b < c < a+1$, 
$p^*(a,b)=Np/[N-p(a+1-b)]$, 
and 
$p^*(a,c) \equiv Np/[N-p(a+1-c)]$;
in particular, if $\mu = 0$ we can include the cases 
$(k-p)/p \leqslant a < k(N-p)/Np$ and
$a < b< c<k(N-p(a+1))/p(N-k) < a+1$.

This class of problems arises in the study of 
standing waves in the anisotropic Schr\"{o}dinger equation. It also appears in models of physical 
phenomena related to the 
equilibrium of the temperature in an
anisotropic media that can be a 
\lq perfect insulator\rq\ at 
some points, represented by 
the degenerate case
\( \inf\sb{|y| \to \infty } 1/|y|\sp{ap} = 0\),
and can be a \lq perfect conductor\rq\ at other points,
represented by the singular case
\( \sup\sb{|y| \to 0 } 1/|y|\sp{ap} = \infty\).
Problem~\eqref{problema} also has some interest in astrophysics, where the dynamics of some galaxies is modeled with the use of cylindrical weights due to their axial symmetry. 
For more details, see 
Dautray and Lions~\cite{MR1036731},
Wang and Willem~\cite{MR1744147}, 
Catrina and Wang~\cite{MR1794994},
Badiale and Tarantello~\cite{MR1918928},
Dr\'{a}bek~\cite{MR2331056},
Ghergu and R\u{a}dulescu~\cite{MR2865669}, 
and references therein.
The pure mathematical interest in 
this class of problems is due to the fact that
problem~\eqref{problema} can be regarded as a model 
for a general class of quasilinear elliptic problems 
with a cylindrical weight in the $p$-Laplacian 
differential operator and also involving 
multiple nonlinearities with cylindrical weights and 
critical Maz'ya's exponents.

The choice for the intervals for the several parameters already specified is motivated by the following Maz'ya's inequality,
which plays a crucial role in our work since it allows the variational formulation of problem~\eqref{problema}.
Let
$N \geqslant 3$,
$1 \leqslant k \leqslant N$,
$z=(x,y) \in \mathbb{R}\sp{N-k}\times\mathbb{R}\sp{k}$,
$1 < p < N$,
and either 
$ a < (k-p)/p$ and
$a \leqslant b \leqslant a+1$, 
or 
$(k-p)/p \leqslant a < k(N-p)/Np$ and
$a \leqslant b < k(N-p(a+1))/p(N-k) < a+1$.
Then there exists a positive constant
$C>0$ such that
\begin{align}
\label{eq:maz}
\left(\int\sb{\mathbb{R}\sp{N}}  \frac{|u(z)|\sp{p^*(a,b)}}{|y|\sp{bp^*(a,b)}} \,dz \right)\sp{p/p^*(a,b)}
\leqslant  C
\int\sb{\mathbb{R}\sp{N}}  \frac{|\nabla u(z)|^p}{|y|\sp{ap}} \,dz
\end{align}
for every function 
$u \in C\sp{\infty}(\mathbb{R}\sp{N}\backslash \{ |y|=0 \})$, 
where
$p^*(a,b)=Np/[N-p(a+1-b)]$
is the critical Maz'ya's exponent.

The proof of inequality~\eqref{eq:maz} can be found in the book by 
Maz'ya~\cite[Section 2.1.6]{maz'ya}; 
the particular case $k=N$ of inequality~\eqref{eq:maz} was proved by 
Caffarelli, Kohn and Nirenberg~\cite{MR768824}; 
see also Lin~\cite{MR864416} for an inequality involving higher order derivatives in the case $k=N$.

In what follows we present a very brief 
historical sketch for these types of problems, mainly concerning existence results. 
To avoid unnecessary repetitions, 
we write the class of problems in the form
\begin{align}
\label{classeproblema}
-\operatorname{div}\left[\frac{|\nabla u|^{p-2}}{|y|^{ap}}\nabla u\right]
-\mu \,\frac{u^{p-1}}{|y|^{p(a+1)}}=\frac{u^{p^*(a,b)-1}}{|y|^{bp^*(a,b)}}
+\lambda\,\frac{u^{p^*(a,c)-1}}{|y|^{cp^*(a,c)}}, \qquad (x,y) \in \mathbb{R}^{N-k}\times\mathbb{R}^k.
\end{align}
We also define the infimum
\begin{align}
\label{imersaolp*(a,b)}
\displaystyle \frac{1}{K(N,p,\mu,a,b)} \equiv 
\inf\sb{\substack{u \in 
C\sp{\infty}(\mathbb{R}\sp{N}\backslash \{ |y|=0 \})
\\ u \not\equiv 0}} 
\frac{\displaystyle\int\sb{\mathbb{R}\sp{N}} \frac{|\nabla u(z)|\sp{p}}{|y|\sp{ap}}\,dz
-\mu\displaystyle\int\sb{\mathbb{R}\sp{N}} \frac{|u(z)|\sp{p}}{|y|\sp{p(a+1)}}\,dz}
{\left(\displaystyle\int\sb{\mathbb{R}\sp{N}} \frac{|u(z)|\sp{p^*(a,b)}}{|y|\sp{bp^*(a,b)}}\,dz\right)\sp{p/p^*(a,b)}},
\end{align}
which is positive for $\mu < \bar{\mu} \equiv 1/K(N,p,0,a,a+1)$
and has an independent interest.
For the determination of the optimal constant $\bar{\mu} = \left[(k-p(a+1))/p \right]^p$, see the paper by
Secchi, Smets and Willem~\cite{MR1990020}.

First we consider the case \( \lambda = 0 \).
For \( k = N \), 
which represents spherical weights, 
\( 1 < p < N \), 
\( \mu = 0 \), 
\( a = 0 \), \( b = 0 \), and
$p^*(a,a) = p^* = Np/(N-p)$, 
problem~\eqref{classeproblema} was treated in the well known papers by 
Aubin~\cite{MR0448404} 
and Talenti~\cite{MR0463908};
they computed the value of the best constant \(K(N,p,0,0,0)\) for the Sobolev inequality
 and presented the class of the functions that assume this infimum. For more information on best constants, see also the papers by 
Chou and Chu~\cite{MR1223899},
Horiuchi~\cite{MR1731336}
and references therein. 
Wang and Willem~\cite{MR1744147}
showed the existence of solution to 
problem~\eqref{classeproblema} 
in the case \( p = 2 \), 
which is the Laplacian operator,
with critical spherical weights represented by  
\( k = N \), 
\( 0 \leqslant a < (N-2)/2 \),
\( a \leqslant b < a+1 \),
with a homogeneous linear term, represented by \( 0 \leqslant \mu < \bar{\mu} \),
and
\( 2^*(a,b) < 2\sp{*}\equiv 2N/(N-2) \). 
Catrina and Wang~\cite{MR1794994} also considered these cases but with \( a < (N-2)/2 \) and obtained several existence, nonexistence, as well as symmetry breaking of solutions to problem~\eqref{classeproblema}. 
Problem~\eqref{classeproblema} in the case
of the $p$-Laplacian operator
represented by \( 1 < p < N \),
with critical spherical weights represented by
\( k = N \),
\( a \leqslant b < a+1 \),
and a homogeneous nonlinearity,
represented by
\( \mu < \bar{\mu} \), was also studied by 
Assun\c{c}\~{a}o, Carri\~{a}o and Miyagaki~\cite{MR2277772},
who extended the results by 
Wang and Willem~\cite{MR1744147}.
For problems with cylindrical weights, 
represented by 
$1 \leqslant k \leqslant N$,
we cite the paper by
Musina~\cite{MR2416099}, who studied the case 
$N\geqslant 3$, 
$1 \leqslant k \leqslant N$, 
$p=2$, 
$a=0$, 
$0 < b \leqslant 1$, 
$\mu < \bar{\mu}$
and proved that problem~\eqref{classeproblema} has a ground state solution; 
in particular, if $k=1$ then the support of this solution is the half-space 
$y\geqslant 0$. 
Additionally, if $b=0$ and $0 < \mu < \bar{\mu}$, problem~\eqref{classeproblema} has a ground state solution when 
either 
$2 < k \leqslant N$,
or $k = 1$ and $N \geqslant 4$; 
in this last case the ground state also has support in a half-space. 
Gazzini and Musina~\cite{MR2499889} studied problem~\eqref{classeproblema} in the case 
$ N \geqslant 3$, 
\( 1 \leqslant k \leqslant N \),
\( 1 < p < N \),  
\( \mu = 0 \), 
and either 
\( 0 < a < (k-p)/p \),
and
\( a \leqslant b < a +1 \),
or 
$a \leqslant 0$ and $a < b < a+1$,
and obtained an existence result;
they also proved an existence result to problem~\eqref{classeproblema} when  
$(k-p)/p \leqslant a < k(N-p)/(Np) < a+1$ and 
\( a < b < k[N-p(a+1)]/p(N-k) \).
In these cases, the infimum
$1/K(N,p,0,a,b)$ is attained when either 
$p^*(a,b) < p\sp{*}$,
or  
$p^*(a,a) = p\sp{*}$ and 
$1/K(N,p,0,a,a) < 1/K(N,p,0,0,0)$.
Bhakta~\cite{MR2934676} generalized these results by considering
problem~\eqref{classeproblema}
with a homogenenous  nonlinearity, represented by \( \mu \neq 0 \),
with 
\(N \geqslant 3\),
\( 1 \leqslant k \leqslant N\),
\( 1 < p < N \),
and either
\( 0 = a = b \) and \( 0 \leqslant \mu < \bar{\mu} \),
or
\( a < (k-p)/p \),
\( a < b < a+1 \) and
$\mu <\bar{\mu}$, 
or still
\( 0 < a = b \) and \( \mu^* < \mu < \bar{\mu} \),
where
$\mu^* < \bar{\mu} 
[(N-1)/(N-p)][-ap^2/(N-ap)] <0$. 
In each one of these cases, there exists solution to problem~\eqref{classeproblema}.
For the case of a nonlinearity with a cylindrical weight
that is not a pure power we cite the papers by 
Badiale and Tarantello~\cite{MR1918928}
and by Sintzoff~\cite{MR2262258}.

On the other hand, in the case 
\( \lambda \neq 0 \) we cite the paper by
Filippucci, Pucci and Robert~\cite{MR2498753}, 
where they proved some existence results for the problem~\eqref{classeproblema}
with \( k = N \), \( 1 < p < N \), 
without singularities in the differential operator, represented by 
\( a = 0 \), but with a homogeneous nonlinearity, that is, \( 0 \leqslant \mu < \bar{\mu} \),
and multiple critical nonlinearities, represented by
\( b = 0 \) and \(0 < c <1 \), that is, only one of them with spherical weight. 
For a generalization of this result, see 
Xuan and Wang~\cite{MR2606810},
where the case
\( N \geqslant 3 \),
\( k = N \),
\( 1 < p < N \),
\( 0 \leqslant \mu < \bar{\mu} \),
\( 0 \leqslant a < (N-p)/p \),
\( a \leqslant b < a+1 \), 
and \( a \leqslant c < a+1 \)
is studied; see also
Sun~\cite{MR3072144}, who studied the case
\( N \geqslant 3 \),
\( 2 < k < N \),
\( 1 < p < k \),
\( 0 \leqslant \mu < \bar{\mu} \),
\( a = 0 \),
\( b = 0 \), 
and \( 0 \leqslant c < 1 \). See also Ambrosetti, Br\'{e}zis and Cerami~\cite{MR1276168} for a related problem with sublinear and 
superlinear linearities in the case of the Laplacian operator without singular weights.

Inspired by Gazzini and Musina~\cite{MR2499889} and by  
Bhakta~\cite{MR2934676} regarding the nature of the cylindrical singularities, 
and by 
Filippucci, Pucci and Robert~\cite{MR2498753},
by Xuan and Wang~\cite{MR2606810},
and by Sun~\cite{MR3072144} 
with respect to the presence of multiple critical nonlinearities, 
our first result deals with existence of a positive, weak solution 
to problem~\eqref{problema}. In its statement, we mention the Sobolev space
$\mathcal{D}_a^{1,p}(\mathbb{R}^N
\backslash\{|y|=0\})$, whose definition appears at the beginning of 
section~\ref{sec:passodamontanha}.

Our first result reads as follows.

\begin{teorema} \label{teo:existencia}
Let $2\leqslant k\leqslant N$, $1 < p < N$ and $a <(k-p)/p$;
let $\bar{\mu} \equiv \left[(k-p(a+1))/p \right]^p$. 
Suppose that the parameters
$b$ and $c$ verify one of the following cases.
\begin{enumerate}[label={\upshape(\arabic*)}, 
align=left, widest=3, leftmargin=*]
\item $0=a = b < c < 1$ and $\mu<\bar{\mu}$;
\label{teoexistenciacaso2}
\item $a < b < c < a+1$, $\mu<\bar{\mu}$;
in particular, if $\mu = 0$ we can include the cases 
$(k-p)/p \leqslant a < k(N-p)/Np$ and
$a < b< c<k(N-p(a+1))/p(N-k) < a+1$.
\label{teoexistenciacaso1}
\item $0<a = b < c < a+1$, $\mu^*<\mu<\bar{\mu}$,
\label{teoexistenciacaso3}
where
$\mu^* < \bar{\mu}[-ap^2/(N-ap)][(N-1)/(N-p)] <0$; 
in particular, if 
$\mu = 0$ we can include the same special cases of 
item~\ref{teoexistenciacaso1}. 
\end{enumerate}
Then there exists a function 
$u\in \mathcal{D}_a^{1,p}(\mathbb{R}^N
\backslash\{|y|=0\})$
such that $u>0$ in $\mathbb{R}^N\backslash \{|y|=0\}$ 
and $u$ is a weak solution to problem~\eqref{problema} 
in $\mathbb{R}^N\backslash \{|y|=0\}$.
\end{teorema}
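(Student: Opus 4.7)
The plan is to apply the Mountain Pass Theorem to the energy functional naturally associated with problem~\eqref{problema}, namely
\[
I(u) = \frac{1}{p}\int_{\mathbb{R}^N}\frac{|\nabla u|^p}{|y|^{ap}}\,dz - \frac{\mu}{p}\int_{\mathbb{R}^N}\frac{|u|^p}{|y|^{p(a+1)}}\,dz - \frac{1}{p^*(a,b)}\int_{\mathbb{R}^N}\frac{u_+^{p^*(a,b)}}{|y|^{bp^*(a,b)}}\,dz - \frac{1}{p^*(a,c)}\int_{\mathbb{R}^N}\frac{u_+^{p^*(a,c)}}{|y|^{cp^*(a,c)}}\,dz,
\]
defined on $\mathcal{D}_a^{1,p}(\mathbb{R}^N\setminus\{|y|=0\})$ with $u_+=\max\{u,0\}$; the positive-part truncation is used so that critical points are automatically non-negative after testing with $u_-$. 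Maz'ya's inequality~\eqref{eq:maz} together with $\mu<\bar\mu$, read through the infimum~\eqref{imersaolp*(a,b)}, guarantees that $I\in C^1$ and that $\|u\|^p:=\int|\nabla u|^p/|y|^{ap}\,dz-\mu\int|u|^p/|y|^{p(a+1)}\,dz$ is an equivalent norm on $\mathcal{D}_a^{1,p}$.

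The mountain pass geometry is routine. Applying~\eqref{eq:maz} to both critical terms yields $I(u)\geqslant\tfrac{1}{p}\|u\|^p - C_1\|u\|^{p^*(a,b)} - C_2\|u\|^{p^*(a,c)}$, so $I(u)\geqslant\alpha>0$ on a small sphere about the origin since $p^*(a,b),p^*(a,c)>p$. For any fixed non-negative $u_0\not\equiv 0$ the higher homogeneity of the critical terms makes $I(tu_0)\to-\infty$ as $t\to\infty$, producing some $e$ with $I(e)<0$.

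The delicate step is the Palais--Smale condition. A PS sequence $(u_n)$ is bounded by the standard combination of $I(u_n)$ and $I'(u_n)u_n$, and passing to a weak limit $u_n\rightharpoonup u$ produces a weak solution of~\eqref{problema}. The issue is that the defect of compactness may concentrate on the singular set $\{|y|=0\}$ or escape to infinity. By a concentration--compactness analysis adapted to cylindrical weights, each bubble carries an energy quantum bounded below by
\[
c_\ast := \min\Bigl\{\tfrac{a+1-b}{N-p(a+1-b)}\,K(N,p,\mu,a,b)^{-N/(p(a+1-b))},\; \tfrac{a+1-c}{N-p(a+1-c)}\,K(N,p,\mu,a,c)^{-N/(p(a+1-c))}\Bigr\},
\]
so PS holds at any level $c<c_\ast$. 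The main obstacle, exactly as in Filippucci--Pucci--Robert~\cite{MR2498753} and Xuan--Wang~\cite{MR2606810}, is to certify that the mountain pass level satisfies $c_{\mathrm{MP}}<c_\ast$. I would realize this with a test path $t\mapsto tU_\varepsilon$, where $U_\varepsilon$ is a suitable rescaling of an extremal, or of an almost-extremal concentrating family, for the embedding that determines the smaller of the two quanta. A careful computation of $\max_{t\geqslant 0}I(tU_\varepsilon)$ must exhibit a strictly negative correction from the subdominant critical term, of the form $-C\varepsilon^\theta$ with $\theta>0$, enough to push the level below $c_\ast$. The case distinction~\ref{teoexistenciacaso2}--\ref{teoexistenciacaso3} and the restriction $k\geqslant 2$ enter precisely here, as they govern which critical term is dominant and whether the candidate extremals are admissible in the weighted Hardy--Maz'ya inequality.

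Once a non-trivial critical point $u$ is obtained, testing the equation against $u_-$ yields $u\geqslant 0$; the strong maximum principle for degenerate quasilinear operators of V\'azquez type, applied away from the singular set $\{|y|=0\}$, then gives $u>0$ throughout $\mathbb{R}^N\setminus\{|y|=0\}$, completing the proof.
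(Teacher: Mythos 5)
Your overall strategy (mountain pass plus a level restriction that restores compactness) matches the paper's, but the three steps you leave as assertions are precisely where the work lies, and two of them are genuine gaps as written. First, the level estimate. You propose a concentrating family $U_\varepsilon$ of almost-extremals and an expansion $\max_t I(tU_\varepsilon) \leqslant c_* - C\varepsilon^\theta$. The paper does not do this: it invokes Bhakta's theorems to obtain an \emph{actual} extremal $u$ of the Maz'ya quotient $1/K(N,p,\mu,a,b)$ (this is exactly where the case distinction \ref{teoexistenciacaso2}--\ref{teoexistenciacaso3} and $k\geqslant 2$ come from), and then gets strictness for free: $\sup_t\varphi(tu)=\varphi(t_0u)=f_1(t_0)-t_0^{p^*(a,c)}\int|u|^{p^*(a,c)}/|y|^{cp^*(a,c)}\,dz/p^*(a,c)$, so equality with $f_1(t_{\max})$ would force $f_1(t_{\max})<f_1(t_0)$, a contradiction. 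Your $\varepsilon$-expansion route would require explicit asymptotics of extremals for the cylindrical Hardy--Maz'ya inequality, which are not available in this generality; as proposed it cannot be completed. (Also, your quantum $c_*$ has the wrong coefficient: $1/p-1/p^*(a,m)=(a+1-m)/N$, not $(a+1-m)/[N-p(a+1-m)]$.)

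Second, ``passing to a weak limit produces a weak solution'' hides two separate difficulties that the paper treats at length. (i) For $p\neq 2$, weak convergence of $u_n$ does not let you pass to the limit in $\int|\nabla u_n|^{p-2}\langle\nabla u_n,\nabla v\rangle|y|^{-ap}\,dz$; one needs a.e.\ convergence of the gradients, proved in the paper via a truncation argument (Lemmas~\ref{benmouloudlema2} and~\ref{benmouloudlema3}). (ii) The weak limit could be zero. The paper rules this out not by proving the Palais--Smale condition below $c_*$ (it never proves PS), but by a dichotomy for sequences weakly convergent to zero (Proposition~\ref{convergenciapara0}): either \emph{both} critical masses vanish locally or \emph{both} exceed a fixed $\epsilon_0>0$ --- this is where the ``asymptotic competition'' between the two nonlinearities is controlled, using that $d$ is below the minimum of the two one-term thresholds --- and then renormalizes the sequence by the scaling group~\eqref{eq:uhtilde} so that the $b$-critical mass in $B_1(0)$ equals a fixed $\epsilon<\epsilon_0/2$, which is incompatible with both horns of the dichotomy if the limit were zero. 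Your single sentence ``PS holds at any level $c<c_*$'' asserts the conclusion of all of this without supplying the mechanism.
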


There are several difficulties to prove this existence result. 
In our case we consider
$1<p<N$, and the Sobolev space $\mathcal{D}_a^{1,p}(\mathbb{R}^N\backslash\{|y|=0\})$ does not have the structure of Hilbert spaces except in the particular case $p = 2$. 
Moreover,
we have to deal with cylindrical critical singularities both on the differential operator and on the homogeneous nonlinearity, as well as on the multiple critical nonlinearities; hence, the operator is not uniformly elliptic.
We also have to overcome with the lack of compactness because we consider critical exponents. Indeed, let 
$(u\sb{n})\sb{n \in \mathbb{N}} \subset \mathcal{D}_a^{1,p}(\mathbb{R}^N\backslash\{|y|=0\})$ 
be a minimizing sequence to 
$1/K(N,p,\mu,a,b)$. Then, for arbitrary sequences
$(t\sb{n})\sb{n \in \mathbb{N}} \subset \mathbb{R}\sp{+}$ and
$(\eta\sb{n})\sb{n \in \mathbb{N}} \subset \mathbb{R}\sp{N-k}$, 
the sequence of functions
$(\tilde{u}\sb{n})\sb{n \in \mathbb{N}} \subset \mathcal{D}_a^{1,p}(\mathbb{R}^N\backslash\{|y|=0\})$ 
defined by
\begin{align}
\label{eq:uhtilde}
\tilde{u}\sb{n}(x,y) & \equiv t\sb{n}\sp{(N-p(a+1))/p} \,u\sb{n}(t\sb{n}x+\eta\sb{n},t\sb{n}y)
\end{align}
is also a minimizing sequence for
$1/K(N,p,\mu,a,b)$ because all the integrals 
involved in the definition of the infimum are invariant 
under the action of this group of transformations.
Hence, there exist non-compact minimizing sequences for $1/K(N,p,\mu,a,b)$; this means, for example, that the mountain pass theorem yields Palais-Smale sequences, but not necessarily critical points. To overcome this difficulty, we have to establish sufficient conditions under which the Palais-Smale sequences have strongly convergent subsequences.
Another difficulty is to prove the almost everywhere convergence of the sequences involving integrals of the gradients of the functions, since this result does not follow directly from the already established ones in the literature. Due to these several aspects of problem~\eqref{problema}, the classical methods of critical point theory of the calculus of variations cannot be applied directly.

Additionally, the combination of multiple nonlinearities
with critical exponents and cylindrical weights yields a more subtle and difficult problem
because we have to perform a very detailed analysis of the terms of the energy functional associated to them. 
The main difficulty in this step is to understand the behavior of the Palais-Smale sequences. Indeed, 
in our problem there is a phenomenon that
Filippucci, Pucci and Robert~\cite{MR2498753} named
\lq asymptotic competition\rq\ between the energies carried by the two critical nonlinearities.
And when one of them dominates the other, there is the vanishing of the weakest one and we obtain solutions to problems with only one critical nonlinearity; in other words, we do not obtain nontrivial solutions to problem~\eqref{problema}.
Therefore, we have to avoid the dominance of one term over the other. 
To accomplished this goal, we will maintain the correspondence between each nonlinearity and its singularity 
with the exponents given by the Maz'ya's inequality, and we will choose a suitable level for the mountain pass theorem 
which involves the best Maz'ya's constant defined in~\eqref{imersaolp*(a,b)} and the functions that assume this value.
This constitutes the major contribution of our work.

To prove Theorem~\ref{teo:existencia}, in section~\ref{sec:passodamontanha} 
we show that the energy functional verifies the hypotheses of the mountain pass theorem; 
consequently, there exist Palais-Smale sequences for this functional. 
Then, under appropriate hypotheses, we show that the energy level of these 
Palais-Smale sequences are such that we can recover their strong convergence, 
up to passage to a subsequence. 
In section~\ref{sec:convergenciafracazero} we study the structure of the 
Palais-Smale sequences that are weakly convergent to zero; in this way we can 
identify an appropriate level to avoid the dominance of the energy carried 
by one of the critical nonlinearities over the other. 
Finally, in section~\ref{conclusaoteoexistencia} we show that the limit of this 
sequence is a nontrivial solution to problem~\eqref{problema}.

\bigskip

To complement our existence theorem, we also study a regularity result of weak, positive solution to a problem related to problem~\eqref{problema}.
As is usual in the theory of nonlinear elliptic equations, to show the class of differentiability of the solution we use the iteration scheme introduced by Moser~\cite{MR0132859}. This technique is also described in the books by 
Gilbarg and Trudinger~\cite[Seção~8.6]{MR1814364} and 
by Struwe~\cite[Appendix B]{MR1736116};
see also the paper by 
Br\'{e}zis and Kato~\cite{MR539217}.
Applications of this method can be found in the papers by 
Egnell~\cite{MR956567},
Chou and Chu~\cite{MR1223899}, 
Chou and Geng~\cite{MR1386127}, 
Xuan~\cite{MR2036200},
Alves and Souto~\cite{MR2902126}, 
Vassilev~\cite{MR2719670},
and
Bastos, Miyagaki and Vieira~\cite{bastos}.

More precisely, consider the class of quasilinear 
elliptic equations with cylindrical singularities and multiple nonlinearities
\begin{align}
\label{problemapucciservadei}
-\operatorname{div}\left[\frac{|\nabla u|^{p-2}}{|y|^{ap}}\nabla u\right]-\mu \, \frac{|u|^{p-2}u}{|y|^{p(a+1)}}
& =\frac{(u_+)^{p^*(a,b)-1}}{|y|^{bp^*(a,b)}}+\frac{(u_+)^{p^*(a,c)-1}}{|y|^{cp^*(a,c)}} \qquad (x,y) \in \Omega,
\end{align}
where the domain
\( \Omega \subset \mathbb{R}^{N-k}\times \mathbb{R}^{k}\backslash\{|y|=0\} \) is not necessarily bounded.

Our regularity result can be stated in the following way.

\begin{teorema}
\label{teo:reg}
Suppose that 
\( 1 \leqslant k \leqslant N \),
\( 1 < p < N \),
\(a < (k-p)/p\),
and
\( a \leqslant b < c < a+1 \) and consider the domain 
$\Omega \subset \mathbb{R}\sp{N-k}\times \mathbb{R}\sp{k}\backslash\{|y|=0\}$, not necessarily bounded.
If $u\in\mathcal{D}_a^{1,p}(\Omega)$
is a weak solution to problem~\eqref{problemapucciservadei}, then 
$u \in L_{\mathrm{loc}}^\infty(\Omega)$.
\end{teorema}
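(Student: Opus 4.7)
The plan is to carry out Moser's iteration on relatively compact subsets $\Omega' \subset\subset \Omega$. Since $\Omega$ stays off the singular cylinder $\{|y|=0\}$, all the weights $|y|^{ap}$, $|y|^{p(a+1)}$, $|y|^{bp^*(a,b)}$, $|y|^{cp^*(a,c)}$ are uniformly bounded above and away from zero on $\Omega'$, so locally~\eqref{problemapucciservadei} becomes a non-degenerate, non-singular quasilinear equation of $p$-Laplacian type; what truly requires care is the critical growth of the two nonlinearities on the right-hand side.

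I would fix a cutoff $\eta \in C_c^\infty(\Omega)$ with $\eta \equiv 1$ on $\Omega'$, set $v := u_+$ and $v_L := \min(v, L)$ for a truncation level $L > 0$, and for each exponent $\beta \geqslant 1$ test the weak formulation of~\eqref{problemapucciservadei} against $\varphi := \eta^p\, u\, v_L^{p(\beta-1)}\in\mathcal{D}_a^{1,p}(\Omega)$. Expanding $\nabla\varphi$ yields a coercive principal term of order $\beta\int \eta^p v_L^{p(\beta-1)}|\nabla u|^p/|y|^{ap}\,dz$, a cross term with $|\nabla\eta|$ controlled by Young's inequality, and a Hardy-type contribution from the $\mu$ term that is dominated on $\Omega'$ by the coercive term using the uniform bounds on $|y|$. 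Applying Maz'ya's inequality~\eqref{eq:maz} to $\eta\, u\, v_L^{\beta-1}$ converts the resulting gradient estimate into a weighted $L^{p^*(a,b)}$ bound of the schematic form
\begin{align*}
\bigl\|\eta\, u\, v_L^{\beta-1}\bigr\|_{L^{p^*(a,b)}(|y|^{-bp^*(a,b)})}^{p} \leqslant C\beta^{p}\Bigl[\text{(critical RHS)} + \|\nabla\eta\|_\infty^{p}\!\int \eta^{p} u^{p} v_L^{p(\beta-1)}\,dz\Bigr].
\end{align*}

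The main obstacle is the critical right-hand side, which produces integrals $\int \eta^p u^{p^*(a,d)} v_L^{p(\beta-1)}/|y|^{dp^*(a,d)}\,dz$ for $d\in\{b,c\}$ that cannot be absorbed directly. To deal with them I would use a Br\'ezis--Kato truncation: for $R>0$, split the integration over $\{u\leqslant R\}$ and $\{u > R\}$. On the former, $u^{p^*(a,d)-p}\leqslant R^{p^*(a,d)-p}$ gives a lower order term that feeds the iteration. On the latter, H\"older's inequality with exponents $(p^*(a,b)/p,\, p^*(a,b)/(p^*(a,b)-p))$ isolates a small prefactor
\begin{align*}
\left(\int_{\{u>R\}\cap\operatorname{supp}\eta} \frac{u^{p^*(a,b)}}{|y|^{bp^*(a,b)}}\,dz\right)^{(p^*(a,b)-p)/p^*(a,b)}
\end{align*}
which tends to $0$ as $R\to\infty$ by absolute continuity of the integral, since $u\in\mathcal{D}_a^{1,p}$ embeds into the weighted $L^{p^*(a,b)}$ space by~\eqref{eq:maz}. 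Choosing $R$ large enough, the critical $b$-term is absorbed into the left-hand side; the $c$-term is treated analogously, using $p^*(a,c)<p^*(a,b)$ (a consequence of $b<c$) to reduce it to a power of the $b$-weighted critical norm via H\"older. Passing $L \to \infty$ by monotone convergence yields the Moser-type inequality $\|v^\beta \eta\|_{L^{p^*(a,b)}(|y|^{-bp^*(a,b)})} \leqslant C\beta\,\|v^\beta \eta_1\|_{L^p(|y|^{-ap})}$ with a slightly larger cutoff $\eta_1$. Iterating with $\beta_n := (p^*(a,b)/p)^n$ along a nested family of cutoffs supported in a fixed neighborhood $\Omega''\subset\subset\Omega$ of $\Omega'$, the standard Moser telescoping yields $\|u_+\|_{L^\infty(\Omega')} \leqslant C\,\|u_+\|_{L^{p^*(a,b)}(|y|^{-bp^*(a,b)};\,\Omega'')}$, finite by~\eqref{eq:maz}; the analogous argument applied to $u_-$ (which satisfies a homogeneous inequality, since the right-hand side of~\eqref{problemapucciservadei} involves only $u_+$) then gives $u \in L^\infty_{\mathrm{loc}}(\Omega)$.
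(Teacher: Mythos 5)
Your proposal is correct and follows essentially the same route as the paper: truncated power-type test functions, Maz'ya's inequality to pass to the weighted critical norm, absorption of the two critical right-hand-side terms by a smallness argument, and the standard Moser telescoping over exponents $(p^*(a,b)/p)^n$. The only inessential differences are that the paper obtains the needed smallness of the critical prefactors by covering $\operatorname{supp}\eta$ with sufficiently small balls (inequality~\eqref{fpr2.26}) rather than by your level-set truncation over $\{u>R\}$, and it keeps the two critical terms as a coupled pair of inequalities with coefficients $\tfrac{1}{3}$ which it then adds, instead of reducing the $c$-term to the $b$-term by H\"older.
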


The corresponding regularity theorem by 
Filippucci, Pucci and Robert~\cite{MR2498753} is a direct application 
of the results by 
Pucci and Servadei~\cite{MR2492235}, 
by 
Druet~\cite{MR1776675},
and by Guedda and Ver\'{o}n~\cite{MR1009077}.
In our case we cannot apply these conclusions due to the presence of the singularity on the differential operator and we have to prove the result independently.
Inspired by Pucci and Servadei~\cite{MR2492235}, 
in section~\ref{regularidadesecao1} we show, through an inductive step, that 
$u \in L_{\gamma,\textrm{loc}}^m(\Omega)$ 
for every $m\in [1,\infty)$ and for some appropriate weight $\gamma=\gamma(m) \in \mathbb{R}^+$. The main difficulties involved in this part of the proof are related to the required estimates not only for one 
but for multiple critical nonlinearities with cylindrical weights; we also have to make estimates for the term of the energy functional involving the gradient which also has a cylindrical weight. 
In section~\ref{regularidadesecao2} we show, using the Moser's iteration scheme, that
$\lim_{m\to \infty}\|u\|_{L_{\gamma,\textrm{loc}}^m(\Omega)}$ is finite; finally, we conclude that $u\in L_\textrm{loc}^\infty (\Omega)$. 

\bigskip

We note that in Theorem~\ref{teo:existencia} both critical exponents are the ones that make problem~\eqref{problema} invariant under the group of transformations defined by~\eqref{eq:uhtilde}. 
A natural question is what happens when one of the nonlinearities has a different exponent. 
In other words, we consider the class of problems where the exponent in one of the nonlinearities is not critical as determined by Maz'ya's inequality.
In this case, we also observe the \lq asymptotic competition\rq\ phenomenon, and there exists only the trivial solution to the problem. 

More precisely, consider the problem
\begin{align}\label{fprteorema3}
-\operatorname{div}\left[\frac{|\nabla u|^{p-2}}{|y|^{ap}}\nabla u\right]-\mu \, \frac{|u|^{p-2}u}{|y|^{p(a+1)}}
=\frac{|u|^{q-2}u}{|y|^{bp^*(a,b)}} +\frac{|u|^{p^*(a,c)-2}u}{|y|^{cp^*(a,c)}}.
\end{align}

Our nonexistence result of nontrivial solution reads as follows.

\begin{teorema}\label{teo:naoexistencia}
Let $1 \leqslant k \leqslant N$, 
$1<p<N$, $a < (k-p)/p$, 
$a\leqslant b<c<a+1$, 
$0\leqslant \mu < \bar{\mu}$. 
If $u \in \mathcal{D}_{a}^{1,p}(\mathbb{R}^N\backslash\{|y|=0\})$ is a weak solution to problem~\eqref{fprteorema3}, 
then $u \equiv 0$ when 
either $1<q<p^*(a,b)$,  
or $q > p^*(a,b)$ and 
$u \in 
L\sb{bp^*(a,b)/q, \operatorname{loc}}\sp{q}
(\mathbb{R}^N\backslash\{|y|=0\}) 
\cap
L_{\mathrm{loc}}^\infty(\mathbb{R}^N
\backslash\{|y|=0\})$.
\end{teorema}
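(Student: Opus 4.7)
The plan is to derive a Pohozaev-type identity by testing~\eqref{fprteorema3} against $z\cdot\nabla u$, with $z=(x,y)$, and then to combine it linearly with the energy identity obtained by testing~\eqref{fprteorema3} against $u$ itself. The motivation is that both critical nonlinearities and the weighted gradient term are invariant under the dilation~\eqref{eq:uhtilde}, so they contribute proportionally in the two identities and can be cancelled simultaneously; what remains is only the noncritical term with exponent $q$, whose coefficient vanishes exactly when $q=p^\ast(a,b)$. For all other values of $q$ this coefficient is nonzero and so the $q$-integral must itself be zero, forcing $u\equiv 0$.

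The formal computation proceeds as follows. Multiplying~\eqref{fprteorema3} by $z\cdot\nabla u$ and integrating by parts, using the basic identity $\operatorname{div}(z/|y|^{\tau})=(N-\tau)/|y|^{\tau}$ for each $\tau\in\{ap,\,p(a+1),\,bp^\ast(a,b),\,cp^\ast(a,c)\}$, together with $|\nabla u|^{p-2}\nabla u\cdot(z\cdot\nabla)\nabla u=(1/p)\,z\cdot\nabla(|\nabla u|^p)$ and the key algebraic identity $(N-cp^\ast(a,c))/p^\ast(a,c)=(N-p(a+1))/p$, one arrives at
\begin{align*}
\frac{N-p(a+1)}{p}\left[\int\frac{|\nabla u|^p}{|y|^{ap}}\,dz-\mu\int\frac{|u|^p}{|y|^{p(a+1)}}\,dz\right]
&=\frac{N-bp^\ast(a,b)}{q}\int\frac{|u|^q}{|y|^{bp^\ast(a,b)}}\,dz\\
&\quad+\frac{N-p(a+1)}{p}\int\frac{|u|^{p^\ast(a,c)}}{|y|^{cp^\ast(a,c)}}\,dz.
\end{align*}
Subtracting $(N-p(a+1))/p$ times the energy identity obtained by testing against $u$, every term collapses except
\[
\frac{(N-p(a+1))\bigl(p^\ast(a,b)-q\bigr)}{pq}\int\frac{|u|^q}{|y|^{bp^\ast(a,b)}}\,dz=0.
\]
Since $a<(k-p)/p$ implies $N-p(a+1)>0$, the prefactor is nonzero whenever $q\neq p^\ast(a,b)$, and the vanishing of the weighted $L^q$-integral yields $u\equiv 0$.

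All integrals appearing in the energy identity are finite because $u\in\mathcal{D}_a^{1,p}$ combined with Maz'ya's inequality~\eqref{eq:maz} controls the critical term; for $1<q<p^\ast(a,b)$ this forces $\int|u|^q/|y|^{bp^\ast(a,b)}\,dz<\infty$ as a difference of finite quantities, whereas for $q>p^\ast(a,b)$ the additional hypothesis $u\in L^q_{bp^\ast(a,b)/q,\operatorname{loc}}\cap L^\infty_{\operatorname{loc}}$ is exactly what supplies both the integrability and the pointwise regularity needed on compact subsets.

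The main obstacle is rigorously justifying the Pohozaev identity, since $u$ is only a weak solution and the singular cylindrical weights, together with the noncompactness of $\mathbb{R}^N$, rule out a direct integration by parts. I plan to proceed by approximation: introduce a product cutoff $\eta_{R,\varepsilon}(z)=\chi_R(|z|)\,\psi_\varepsilon(|y|)$ that vanishes in an $\varepsilon$-neighborhood of $\{|y|=0\}$ and outside the ball of radius $R$, test the equation against the admissible function $\eta_{R,\varepsilon}\,(z\cdot\nabla u)$, and control the resulting commutator terms through the weighted integrability of $|\nabla u|^p/|y|^{ap}$ and of the nonlinearities, using the local boundedness furnished by (a variant of) Theorem~\ref{teo:reg}. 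The assumption $a<(k-p)/p$ guarantees summability of the weighted measures in the collar $\{|y|\leqslant\varepsilon\}$ needed to pass to the limit $\varepsilon\to 0$, while the limit $R\to\infty$ is handled by selecting a sequence of radii along which the natural flux-type boundary terms vanish, as in the classical Pucci--Serrin scheme adapted to the present singular, cylindrical setting.
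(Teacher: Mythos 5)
Your proposal follows essentially the same route as the paper: a Pohozaev-type identity obtained by testing against $\langle z,\nabla(\eta u)\rangle$ with cutoffs excising the singular set and infinity, combined with the energy identity $\langle\varphi'(u),u\rangle=0$, so that the scaling relation $(N-bp^*(a,b))/p^*(a,b)=(N-p(a+1))/p$ cancels the gradient, Hardy and critical terms and leaves exactly the coefficient $(N-bp^*(a,b))\left(1/q-1/p^*(a,b)\right)$ in front of the weighted $L^q$-integral. The one point where your plan is under-specified is the regularity needed to make $z\cdot\nabla u$ an admissible test function: local boundedness alone does not suffice, and the paper first upgrades $u$ to $C^1\cap W^{2,1}_{\mathrm{loc}}$ away from $\{|y|=0\}$ via Tolksdorf's regularity theory before running the density and cutoff argument.
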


As is usual in the proofs of nonexistence results, in section~\ref{naoexistenciasecao1} we prove a 
Pohozaev-type identity. The main difficulty is to identify an expression involving at least two terms for the gradient and its corresponding cylindrical singularity due to the fact that we work in 
\( \mathbb{R}\sp{N-k} \times \mathbb{R}\sp{k}\). 
Then, in section~\ref{naoexistenciasecao2} 
we show that applying the Pohozaev-type identity to a solution of a problem related to problem~\eqref{fprteorema3} leads to the vanishing of a somewhat involved integral. Finally, we show that if the dimensional balance involving one of the nonlinearities and its corresponding singular term does not verify the Maz'ya's relation, then the norm of the  solution is zero and we conclude that problem~\eqref{fprteorema3} only has the trivial solution. 

\section{Existence of Palais-Smale sequences}
\label{sec:passodamontanha}

To use the direct method of the calculus of variations, 
we look for solutions to problem~\eqref{problema} 
in the Sobolev space 
$\mathcal{D}_a^{1,p}(\mathbb{R}^N\backslash\{|y|=0\})$ 
defined as the completion of the space
$C\sb{c}\sp{\infty}(\mathbb{R}\sp{N})$ 
of smooth functions with compact support
with respect to the norm defined by
\begin{align}
\label{eq:normagrad}
\left\|\nabla u\right\|_{L_a^p(\mathbb{R}^N)} \equiv \left(\int_{\mathbb{R}^N}\frac{|\nabla u|^p}{|y|^{ap}}\, dz\right)^{1/p}.
\end{align}

It is a well known fact that 
$\mathcal{D}_a^{1,p}(\mathbb{R}^N\backslash\{|y|=0\})$ 
is a reflexive Banach space and that its elements
can be identified with measurable functions up to subsets of measure zero.
Additionaly, from inequality~\eqref{eq:maz} we can deduce that the embedding  
$\mathcal{D}_a^{1,p}(\mathbb{R}^N\backslash\{|y|=0\}) 
\hookrightarrow L\sb{b}\sp{p^*(a,b)}(\mathbb{R}\sp{N})$ is continuous, where 
$L\sb{b}\sp{p^*(a,b)}(\mathbb{R}\sp{N})$ 
denotes the Lebesgue space
$L\sp{p^*(a,b)}(\mathbb{R}\sp{N})$ with weight
$|y|\sp{-bp^*(a,b)}$
and norm defined by
\begin{align*}
\|u\|\sb{L\sb{b}\sp{p^*(a,b)}} 
& \equiv \left( 
\displaystyle\int\sb{\mathbb{R}\sp{N}} 
\frac{|u(z)|\sp{p^*(a,b)}}{|y|\sp{bp^*(a,b)}} 
\, dz \right)^{1/p^*(a,b)}.
\end{align*}

Using Maz'ya's inequality~\eqref{eq:maz} we can show that the embedding 
$\mathcal{D}_a^{1,p}(\mathbb{R}^N\backslash\{|y|=0\}) 
\hookrightarrow L^p_{a+1}(\mathbb{R}\sp{N})$
is continuous for the parameters in the specified 
intervals. More precisely, the inequality
\begin{align*}
\bar{\mu}\int_{\mathbb{R}\sp{N}}
\frac{|u|^p}{|y|^{p(a+1)}}\, dz 
\leqslant 
\int_{\mathbb{R}\sp{N}}
\frac{|\nabla u|^p}{|y|^{ap}}\, dz  
\end{align*}
is valid for every function 
$u\in\mathcal{D}_a^{1,p}(\mathbb{R}^N\backslash\{|y|=0\})$. 
From this inequality,
for $\mu<\bar{\mu}$ we can define a norm 
$\|\cdot\| \colon 
\mathcal{D}_a^{1,p}(\mathbb{R}^N\backslash\{|y|=0\}) 
\to \mathbb{R}$ 
by
\begin{align}
\label{norma}
\|u\|  \equiv \left(
\int_{\mathbb{R}\sp{N}}\frac{|\nabla u|^p}{|y|^{ap}}\, dz 
- \mu\int_{\mathbb{R}\sp{N}}\frac{|u|^p}{|y|^{p(a+1)}}\, dz\right)^\frac{1}{p}
\end{align}
which is well defined in the Sobolev space  
$\mathcal{D}_a^{1,p}(\mathbb{R}^N\backslash\{|y|=0\})$. 
We note that 
the inequalities
\begin{align*}
\left(1-\frac{\mu_+}{\bar{\mu}}\right)\left\|\nabla u\right\|_{L^p_a(\mathbb{R}^N)}^p \leqslant \|u\|^p \leqslant 
\left(1 + \frac{\mu_-}{\bar{\mu}}\right)\left\|\nabla u\right\|_{L^p_a(\mathbb{R}^N)}^p
\end{align*}
are valid for every function 
$u \in \mathcal{D}_a^{1,p}(\mathbb{R}^N\backslash\{|y|=0\})$, 
where 
$\mu_+ = \max\{\mu,0\}$ and 
$\mu_-= \max\{-\mu,0\}$. 
For that reason, the norms defined by~\eqref{eq:normagrad} and~\eqref{norma} are equivalent.

A weak solution to problem~\eqref{problema}
is a function
$u\in \mathcal{D}_a^{1,p}(\mathbb{R}^N\backslash\{|y|=0\})$ 
such that the relation
\begin{align}\label{solucaofraca}
&\int_{\mathbb{R}\sp{N}}\frac{|\nabla u|^{p-2}}{|y|^{ap}}\nabla u\nabla v \, dz 
-\mu\int_{\mathbb{R}\sp{N}} \frac{u^{p-1}}{|y|^{p(a+1)}}v \,dz
=\int_{\mathbb{R}\sp{N}}\frac{(u)_+^{p^*(a,b)-1}}{|y|^{bp^*(a,b)}}v\, dz 
+ \int_{\mathbb{R}\sp{N}}\frac{(u)_+^{p^*(a,c)-1}}{|y|^{cp^*(a,c)}}v\, dz
\end{align}
is valid for every function 
$v \in \mathcal{D}_a^{1,p}(\mathbb{R}^N\backslash\{|y|=0\})$.
Now we define the energy functional
$\varphi\colon \mathcal{D}_a^{1,p}(\mathbb{R}^N\backslash\{|y|=0\}) \to \mathbb{R}$ by
\begin{align}
\label{eq:funcenergia}
\begin{split}
\varphi(u) 
& =\frac{1}{p}\int\sb{\mathbb{R}^N}\frac{|\nabla u|^{p}}{|y|^{ap}}\, dz
-\frac{\mu}{p}\int\sb{\mathbb{R}^N}\frac{|u|^p}{|y|^{p(a+1)}}\, dz \\
& \qquad -\frac{1}{p^*(a,b)}\int\sb{\mathbb{R}^N}\frac{(u_+)^{p^*(a,b)}}{|y|^{bp^*(a,b)}}\, dz
-\frac{1}{p^*(a,c)}\int\sb{\mathbb{R}^N}\frac{(u_+)^{p^*(a,c)}}{|y|^{cp^*(a,c)}}\, dz,
\end{split}
\end{align}
where we use the notation $u_+(x,y)=\max\{u(x,y),0\}$.
It is standard to verify that its G\^{a}teaux derivative is given by 
\begin{align*}
\begin{split}
\langle \varphi' (u), v \rangle
& = \int\sb{\mathbb{R}^N}\frac{|\nabla u|^{p-2}}{|y|^{ap}} 
\langle \nabla u , \nabla v \rangle \, dz
-\mu \int\sb{\mathbb{R}^N}\frac{|u|^{p-2}}{|y|^{p(a+1)}} u v \, dz \\
& \qquad - \int\sb{\mathbb{R}^N}\frac{(u_+)^{p^*(a,b)-2}}{|y|^{bp^*(a,b)}} u v \, dz
-\lambda \int\sb{\mathbb{R}^N}\frac{(u_+)^{q-2}u}{|y|^{cp^*(a,c)}} u v\, dz
\end{split}
\end{align*}
for every $u,v \in \mathcal{D}\sb{a}\sp{1,p}(\mathbb{R}\sp{N} \backslash \{|y|=0\})$. 
Therefore, critical points of this functional are weak solutions to problem~\eqref{problema}.

To prove Theorem~\ref{teo:existencia}, in the first place we show the existence of Palais-Smale sequences for 
suitable levels that will allow us to recover the compactness.
\begin{prop}\label{sequenciaps}
Suppose that the hypotheses of 
Theorem~\ref{teo:existencia} 
are valid and let
$\varphi: \mathcal{D}_a^{1,p}(\mathbb{R}^N\backslash\{|y|=0\}) \to \mathbb{R}$ be the energy functional 
defined in~\eqref{eq:funcenergia}.
Then there exist a Palais-Smale sequence for
$\varphi$ at a level 
\begin{align}\label{intervalod}
0 < d < d_* & \equiv  
\min\sb{m \in \{ b,c \}}
\left\{\left(\frac{1}{p}-\frac{1}{p^*(a,m)}\right)K(N,p,\mu,a,m)^{-\frac{p^*(a,m)}{p^*(a,m)-p}}\right\}.
\end{align}
More specifically, there exists a sequence $(u_n)_{n\in\mathbb{N}} \in \mathcal{D}_a^{1,p}(\mathbb{R}^N\backslash\{|y|=0\})$ such that
\begin{align*}
0 <\lim_{n\to \infty} \varphi(u_n)=d < d_*
\quad \mbox{and} \quad \lim_{n \to \infty} \varphi'(u_n) = 0  
\mbox{ strongly in} 
\left(\mathcal{D}_a^{1,p}(\mathbb{R}^N\backslash\{|y|=0\})\right)^*.
\end{align*}
\end{prop}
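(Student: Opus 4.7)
The plan is to apply a mountain-pass argument of Ambrosetti--Rabinowitz type, in the version that does not assume the Palais--Smale condition and produces a Palais--Smale sequence at the minimax level via the quantitative deformation lemma, and then to estimate that minimax level strictly from above by $d_{*}$ using a test ray built from a minimizer of~\eqref{imersaolp*(a,b)}.

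First I would verify the mountain-pass geometry of $\varphi$. Since $\varphi(0)=0$, two applications of Maz'ya's inequality~\eqref{eq:maz} combined with the norm equivalence between~\eqref{eq:normagrad} and~\eqref{norma} produce constants $C_b,C_c>0$ with
\begin{align*}
\varphi(u)\geqslant \tfrac{1}{p}\|u\|^{p}-C_b\|u\|^{p^{*}(a,b)}-C_c\|u\|^{p^{*}(a,c)}.
\end{align*}
Because $p<\min\{p^{*}(a,b),p^{*}(a,c)\}$, there exist $\rho,\alpha>0$ such that $\varphi(u)\geqslant\alpha$ whenever $\|u\|=\rho$. For any nontrivial $u_{0}\geqslant 0$ in $\mathcal{D}_a^{1,p}(\mathbb{R}^N\backslash\{|y|=0\})$, one has $\varphi(tu_{0})\to-\infty$ as $t\to\infty$ because both critical terms dominate the term of order $t^{p}$. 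Setting
\[
d\equiv\inf_{\gamma\in\Gamma}\max_{t\in[0,1]}\varphi(\gamma(t)),\qquad \Gamma\equiv\{\gamma\in C([0,1],\mathcal{D}_a^{1,p}(\mathbb{R}^N\backslash\{|y|=0\}))\,:\,\gamma(0)=0,\ \varphi(\gamma(1))<0\},
\]
the quantitative deformation lemma yields a sequence $(u_{n})$ with $\varphi(u_{n})\to d\geqslant\alpha>0$ and $\varphi'(u_{n})\to 0$ in the dual space. It remains to show $d<d_{*}$.

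Let $m_{0}\in\{b,c\}$ realize the minimum in~\eqref{intervalod}, write $m_{1}$ for the other index, and let $U$ be an extremal of~\eqref{imersaolp*(a,b)} with $m=m_{0}$, normalized so that $\int_{\mathbb{R}^N}(U_{+})^{p^{*}(a,m_{0})}/|y|^{m_{0}p^{*}(a,m_{0})}\,dz=1$, whose existence in each parameter regime of Theorem~\ref{teo:existencia} is guaranteed by the constructions of Gazzini--Musina and Bhakta cited in the Introduction. Let $\varphi_{0}$ denote the functional~\eqref{eq:funcenergia} with the $m_{1}$-critical term removed. A direct optimization gives
\begin{align*}
\max_{t\geqslant 0}\varphi_{0}(tU)
=\left(\tfrac{1}{p}-\tfrac{1}{p^{*}(a,m_{0})}\right)K(N,p,\mu,a,m_{0})^{-p^{*}(a,m_{0})/(p^{*}(a,m_{0})-p)}
=d_{*}.
\end{align*}
Since $U$ is nontrivial and nonnegative, $\varphi(tU)<\varphi_{0}(tU)$ for every $t>0$; the function $t\mapsto\varphi(tU)$ attains its positive maximum at some interior $t^{*}>0$, so $\max_{t\geqslant 0}\varphi(tU)=\varphi(t^{*}U)<\varphi_{0}(t^{*}U)\leqslant d_{*}$. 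Choosing $T>0$ so large that $\varphi(TU)<0$, the path $\gamma(s)=sTU$ lies in $\Gamma$ and witnesses $d\leqslant\max_{s\in[0,1]}\varphi(\gamma(s))<d_{*}$.

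The hardest step is securing the strict inequality $d<d_{*}$ whenever a genuine minimizer of~\eqref{imersaolp*(a,b)} with $m=m_{0}$ is not known to exist: in that situation $U$ must be replaced by a family of approximate minimizers $U_{\varepsilon}$, and the comparison above becomes an asymptotic estimate requiring simultaneously (a) the almost-extremality of $U_{\varepsilon}$ for the $m_{0}$-critical term and (b) a uniform positive lower bound, as $\varepsilon\to 0^{+}$, on the integral $\int_{\mathbb{R}^N}(U_{\varepsilon})_{+}^{p^{*}(a,m_{1})}/|y|^{m_{1}p^{*}(a,m_{1})}\,dz$. Balancing these two requirements is the \emph{asymptotic competition} phenomenon pointed out in the Introduction, and its resolution depends on the explicit form of the extremal profiles, which changes with the parameter regime $0=a=b$, $a<b<c$ or $0<a=b<c$; this is the reason why Theorem~\ref{teo:existencia} is split into the three separate cases~\ref{teoexistenciacaso2},~\ref{teoexistenciacaso1},~\ref{teoexistenciacaso3}.
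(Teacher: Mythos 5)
Your proposal is correct and follows essentially the same route as the paper: verify the mountain-pass geometry via Maz'ya's inequality, then take an extremal of the quotient~\eqref{imersaolp*(a,b)} for the index realizing the minimum in~\eqref{intervalod} (existence supplied by Bhakta's theorems) and use the positivity of the remaining critical term along the ray $t\mapsto tU$ to force the strict inequality $d<d_*$ — exactly the content of Lemmas~\ref{claim2.1},~\ref{claim2.2} and~\ref{claim2.3}, with your direct comparison $\varphi(t^*U)<\varphi_0(t^*U)\leqslant d_*$ replacing the paper's equivalent contradiction argument. Your closing caveat about approximate minimizers is not needed here, since extremals exist in every parameter regime covered by Theorem~\ref{teo:existencia}.
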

To prove Proposition~\ref{sequenciaps}, we begin by showing that we can apply Ambrosetti and Rabinowitz's mountain pass theorem.
See Willem~\cite[Theorem~2.10]{MR1400007}.

\begin{lema}\label{claim2.1} 
For the parameters in the specified intervals, the energy functional~\eqref{eq:funcenergia} verifies the hypotheses of the mountain pass theorem for every
$u \in \mathcal{D}_a^{1,p}(\mathbb{R}^N\backslash\{|y|=0\})$ such that $u_+ \not\equiv 0$, that is,
\begin{enumerate}[label={\upshape(\arabic*)}, 
align=left, widest=2, leftmargin=*]
\item
$\varphi(0)=0$ and there exist $R, \lambda >0$ such that $\varphi|_{\partial B_R(0)} \geqslant \lambda > 0$.
\item
For any $u \in \mathcal{D}_a^{1,p}(\mathbb{R}^N\backslash\{|y|=0\})$, there exists $t_u >0$, such that $\varphi(t_uu) \leqslant 0$, and $\|t_uu\| \geqslant R$.
\end{enumerate}

\end{lema}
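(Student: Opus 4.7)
The plan is to verify the two standard mountain pass conditions by combining the Maz'ya inequality~\eqref{eq:maz} with the equivalence of the norms $\|\cdot\|$ and $\|\nabla\cdot\|_{L^p_a}$ established above, and to exploit the fact that both critical exponents satisfy $p^*(a,b), p^*(a,c) > p$, which follows from $b,c < a+1$.

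For item~(1), we have $\varphi(0)=0$ directly from the definition~\eqref{eq:funcenergia}. To obtain a positive lower bound on a small sphere, I would apply Maz'ya's inequality~\eqref{eq:maz} (once with exponent $b$ and once with exponent $c$) together with the equivalence of the norms, in order to dominate each of the two critical integral terms by a constant multiple of $\|u\|^{p^*(a,b)}$ and $\|u\|^{p^*(a,c)}$ respectively. The resulting estimate has the schematic form
\begin{align*}
\varphi(u) \;\geqslant\; \frac{1}{p}\|u\|^{p} - C_{b}\,\|u\|^{p^*(a,b)} - C_{c}\,\|u\|^{p^*(a,c)}.
\end{align*}
Since $p^*(a,b), p^*(a,c) > p$, the right-hand side is strictly positive for all sufficiently small $\|u\|$; choosing $R>0$ small enough and setting $\lambda$ equal to the positive minimum of the polynomial on the right at $\|u\|=R$ concludes this part.

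For item~(2), fix $u\in \mathcal{D}_a^{1,p}(\mathbb{R}^N\setminus\{|y|=0\})$ with $u_+\not\equiv 0$. A direct computation using the homogeneity of each term in~\eqref{eq:funcenergia} yields
\begin{align*}
\varphi(t u) \;=\; \frac{t^{p}}{p}\|u\|^{p}
- \frac{t^{p^*(a,b)}}{p^*(a,b)}\int_{\mathbb{R}^{N}}\frac{(u_+)^{p^*(a,b)}}{|y|^{bp^*(a,b)}}\,dz
- \frac{t^{p^*(a,c)}}{p^*(a,c)}\int_{\mathbb{R}^{N}}\frac{(u_+)^{p^*(a,c)}}{|y|^{cp^*(a,c)}}\,dz.
\end{align*}
Because $u_+\not\equiv 0$, both weighted integrals are strictly positive; since $p^*(a,b), p^*(a,c) > p$, the nonlinear terms dominate the quadratic-type term as $t\to\infty$, so $\varphi(tu)\to -\infty$. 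Consequently we may choose $t_u>0$ sufficiently large that simultaneously $\varphi(t_u u)\leqslant 0$ and $\|t_u u\| = t_u\|u\| \geqslant R$.

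There is no genuine obstacle here; the entire argument is a routine consequence of the continuous embedding provided by Maz'ya's inequality and the strict supercriticality of both exponents with respect to $p$. The only subtlety worth flagging is that the positivity of the integral $\int (u_+)^{p^*(a,m)}|y|^{-mp^*(a,m)}\,dz$ does require $u_+\not\equiv 0$, which is precisely the hypothesis imposed in the statement; the density of $C_c^\infty(\mathbb{R}^N)$ in $\mathcal{D}_a^{1,p}(\mathbb{R}^N\setminus\{|y|=0\})$ guarantees the existence of such $u$, so item~(2) is nonvacuous.
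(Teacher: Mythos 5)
Your proposal is correct and follows essentially the same route as the paper: both bound $\varphi(u)$ from below by $\tfrac{1}{p}\|u\|^{p}-C_{b}\|u\|^{p^*(a,b)}-C_{c}\|u\|^{p^*(a,c)}$ via Maz'ya's inequality (equivalently, the best constants $K(N,p,\mu,a,b)$ and $K(N,p,\mu,a,c)$) and then use $p<p^*(a,c)<p^*(a,b)$ to get the small-sphere lower bound and the decay $\varphi(tu)\to-\infty$ as $t\to\infty$ when $u_+\not\equiv 0$. The only material the paper includes beyond this is the definition of the minimax level $d_u$, which is setup for the subsequent lemmas rather than part of verifying the mountain pass geometry.
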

\begin{proof}
Clearly we have $\varphi (0) = 0$; moreover, we can prove that
$\varphi \in C^1(\mathcal{D}_a^{1,p}(\mathbb{R}^N\backslash\{|y|=0\}))$
by using standard arguments. 
Using the definition~\eqref{imersaolp*(a,b)} of the optimal constant of the Sobolev embedding, we obtain
\begin{align*}
\varphi(u) 
&\geqslant \frac{1}{p}||u||^p-\frac{(K(N,p,\mu,a,b))^\frac{p^*(a,b)}{p}}{p^*(a,b)}||u||^{p^*(a,b)}
-\frac{(K(N,p,\mu,a,c))^\frac{p^*(a,c)}{p}}{p^*(a,c)}||u||^{p^*(a,c)}.
\end{align*}
Since $p<p^*(a,c)<p^*(a,b)$, there exist $R,\lambda>0$ such that 
$\varphi(u)\geqslant \lambda$ for every 
$u\in \mathcal{D}_a^{1,p}(\mathbb{R}^N\backslash\{|y|=0\})$ 
that verifies the condition $\|u\|=R$;
moreover, 
$\lim_{t \to +\infty} \varphi (tu) = -\infty$. 

Now let $t_u > 0$ be a number such that 
$\varphi(tu) <0$ for every $t \geqslant t_u$, and $\|t_uu\|>R$.
To determine the minimax level, we consider the class of paths connecting the zero function to $t_uu$, that is,
\begin{align*}
\Gamma_u \equiv \left\{\gamma \in C^0\left([0,1], \mathcal{D}_a^{1,p}(\mathbb{R}^N\backslash\{|y|=0\})\right) | \gamma(0)=0 
\mbox{ and } \gamma(1)=t_uu\right\};
\end{align*}
finally, the energy level is given by
\begin{align}\label{eq:du}
d_u \equiv \inf_{\gamma \in \Gamma_u}\sup_{t\in[0,1]}\varphi(\gamma(t))> 0.
\end{align}
Thus, all the hypotheses of the mountain pass theorem are verified by the 
functional \( \varphi \).
\end{proof}
Using the mountain pass theorem, there exists a sequence 
$(u_n)_{n\in\mathbb{N}} \in \mathcal{D}_a^{1,p}(\mathbb{R}^N\backslash\{|y|=0\})$ 
such that
\begin{align*}
\lim_{n\to \infty} \varphi(u_n)=d\sb{u} > 0
\quad \mbox{and} \quad \lim_{n \to \infty} \varphi'(u_n) = 0  
\mbox{ strongly in} 
\left(\mathcal{D}_a^{1,p}(\mathbb{R}^N\backslash\{|y|=0\})\right)^*.
\end{align*}

In the next two lemmas we show that the number $d_u$ is below an appropriate level, for which we can recover the compactness of the Palais-Smale. 
\begin{lema}\label{claim2.2}
Suppose that the hypotheses on one of the items of
Theorems~\ref{teo:existencia} are valid. 
Then there exists a function
$u \in \mathcal{D}_a^{1,p}(\mathbb{R}^N\backslash \{|y|=0\})$ 
such that $u \geqslant 0$ and
\begin{align*}
d_u < \left(\frac{1}{p}-\frac{1}{p^*(a,b)}\right)K(N,p,\mu,a,b)^{-\frac{p^*(a,b)}{p^*(a,b)-p}}.
\end{align*}
\end{lema}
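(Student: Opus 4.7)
The plan is to test the energy functional along the ray generated by a non-negative extremal for the Maz'ya infimum $1/K(N,p,\mu,a,b)$ defined in~\eqref{imersaolp*(a,b)}. Under the case-dependent hypotheses of Theorem~\ref{teo:existencia}, such an extremal $U \in \mathcal{D}_a^{1,p}(\mathbb{R}^N\backslash\{|y|=0\})$ is furnished by the existence results cited in the introduction (notably Gazzini--Musina~\cite{MR2499889} and Bhakta~\cite{MR2934676}); passing to $|U|$ we may assume $U\geq 0$. With $t_U>0$ as in Lemma~\ref{claim2.1} and the straight-line path $\gamma(s)=s\,t_U U\in\Gamma_U$, the definition~\eqref{eq:du} immediately gives
\begin{align*}
d_U\;\leq\;\sup_{s\in[0,1]}\varphi(s\,t_U U)\;\leq\;\sup_{t\geq 0}\varphi(tU),
\end{align*}
so it suffices to estimate this one-variable supremum.

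Setting $A:=\|U\|^p$, $A_b:=\int_{\mathbb{R}^N} U^{p^*(a,b)}/|y|^{bp^*(a,b)}\,dz$ and $A_c:=\int_{\mathbb{R}^N} U^{p^*(a,c)}/|y|^{cp^*(a,c)}\,dz$ (the last one finite by~\eqref{eq:maz} applied with $c$ in place of $b$), one has
\begin{align*}
\varphi(tU)\;=\;\frac{t^p}{p}A\;-\;\frac{t^{p^*(a,b)}}{p^*(a,b)}A_b\;-\;\frac{t^{p^*(a,c)}}{p^*(a,c)}A_c\;=:\;f(t)-h(t),
\end{align*}
with $h(t)>0$ for $t>0$. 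The elementary maximization
\begin{align*}
\sup_{t\geq 0}\Bigl(\tfrac{t^p}{p}A-\tfrac{t^q}{q}B\Bigr)\;=\;\Bigl(\tfrac{1}{p}-\tfrac{1}{q}\Bigr)A^{q/(q-p)}B^{-p/(q-p)}\qquad(q>p,\ A,B>0),
\end{align*}
applied with $q=p^*(a,b)$ and $B=A_b$, combined with the extremal identity $A=K(N,p,\mu,a,b)^{-1}A_b^{p/p^*(a,b)}$, reduces $\sup_{t\geq 0}f(t)$ to exactly the right-hand side of the claim. Since $f$ attains its maximum at a unique $t_\star>0$ and $h(t_\star)>0$, one concludes $\sup_{t\geq 0}(f(t)-h(t))<\sup_{t\geq 0}f(t)$, yielding the strict inequality.

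The main obstacle is guaranteeing a non-negative extremal $U$ in each of the three parameter regimes of Theorem~\ref{teo:existencia}; this is precisely why the hypotheses split into the three cases, which have been arranged to match the existence results available in the literature. In any borderline configuration where the Maz'ya infimum were only approximated, one would instead work with a minimizing family $(U_\varepsilon)$ of suitable rescalings (or truncations of a bubble adapted to the cylindrical weight) and verify that $A_c(U_\varepsilon)$ is bounded below uniformly as $\varepsilon\to 0^+$, so that the negative contribution $h(t_{\varepsilon,\star})$ still dominates the vanishing gap $\sup f_\varepsilon - K^{-p^*(a,b)/(p^*(a,b)-p)}$. Establishing this uniform non-degeneracy of $A_c$ along the approximating family would be the only genuinely delicate step.
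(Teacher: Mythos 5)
Your proposal is correct and follows essentially the same route as the paper: test $\varphi$ along the ray through a nonnegative Maz'ya extremal (whose existence the paper takes from Bhakta~\cite{MR2934676}), bound $\sup_t\varphi(tU)$ by the maximum of the two-term auxiliary function $f$, identify that maximum with the right-hand side via the extremal identity, and obtain strictness from the positivity of the discarded $c$-term at the maximizer of $\varphi(tU)$. The final paragraph about approximating minimizing families is unnecessary, since in all three regimes the infimum is actually attained.
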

\begin{proof}
Let $u \in \mathcal{D}_a^{1,p}(\mathbb{R}^N\backslash\{|y|=0\})$ be a nonnegative function 
that attains the infimum \( 1/K(N,p,\mu,a,b) \) defined by~\eqref{imersaolp*(a,b)}; the proof of 
the existence of such a function can be seen in the paper by 
Bhakta~\cite[Theorems~1.1 and~1.2]{MR2934676}. Using the definition of $d_u$ given in the proof of Lemma~\ref{claim2.1}, we obtain
$$
d_u \leqslant \sup_{t \geqslant 0} \varphi(tu).
$$

Now let the function $f\sb{1} \colon \mathbb{R}\sb{*}\sp{+} \to \mathbb{R}$ be defined by
\begin{align*}
f\sb{1}(t)
&\equiv \frac{t^p}{p}\left(\int\sb{\mathbb{R}^N}\frac{|\nabla u|^{p}}{|y|^{ap}}\, dz
-\frac{\mu}{p}\int\sb{\mathbb{R}^N}\frac{|u|^p}{|y|^{p(a+1)}}\, dz\right)
- \frac{t^{p^*(a,b)}}{p^*(a,b)}\left(\int_{\mathbb{R}^N}\frac{|u|^{p^*(a,b)}}{|y|^{bp^*(a,b)}}\, dz\right).
\end{align*}
Denoting by \( t\sb{\max} \) the point of maximum for \( f\sb{1} \), we obtain
\begin{align*}
d_u \leqslant \sup_{t \geqslant 0} \varphi(tu) 
\leqslant \sup_{t \geqslant 0} f_1(t)
= f\sb{1}(t\sb{\max})
= \left(\frac{1}{p}-\frac{1}{p^*(a,b)}\right)K(N,p,\mu,a,b)^{-\frac{p^*(a,b)}{p^*(a,b)-p}}.
\end{align*}

To show that this inequality is strict, we argue by contradiction and we suppose that the equality is valid.
Denoting by $t_0 >0$ the factor of the extremal $u$ where the supremum of the energy functional is attained, 
we obtain 
%the zeros of the equation 
%$\varphi'(tu)=0$ by $t_1=0$ and $t_2> 0$,
%it follows from our hypothesis that 
%$\varphi(t_2u)=f_1(t_1)$, that is,
\begin{align*}
d_u = \varphi(t_0 u) = f_1(t_0) - \frac{t_0^{p^*(a,c)}}{p^*(a,c)}\int_{\mathbb{R}^N}\frac{|u|^{p^*(a,c)}}{|y|^{cp^*(a,c)}}\, dz
=f_1(t\sb{\max}).
\end{align*}
This means that $f_1(t\sb{\max}) < f_1(t_0)$, which is a contradiction.
The result follows.
\end{proof}

\begin{lema}\label{claim2.3}
Suppose that the hypotheses on one of the items of
Theorems~\ref{teo:existencia} are valid. 
Then there exists a function 
$u \in \mathcal{D}_a^{1,p}(\mathbb{R}^N\backslash\{|y|=0\})$ 
such that $u \geqslant 0$ and $0 < d_u < d_*$.
\end{lema}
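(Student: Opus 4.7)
The plan is to exploit the symmetry between the exponents $b$ and $c$ in the definition of $d_*$ and apply Lemma~\ref{claim2.2} to whichever of the two realizes the minimum. Setting
\begin{align*}
A_m \equiv \left(\frac{1}{p}-\frac{1}{p^*(a,m)}\right)K(N,p,\mu,a,m)^{-p^*(a,m)/(p^*(a,m)-p)},
\end{align*}
so that $d_* = \min\{A_b,A_c\}$, the case $A_b \leqslant A_c$ is handled directly by Lemma~\ref{claim2.2}: the function $u$ produced there satisfies $d_u < A_b = d_*$, and there is nothing left to prove. The remaining case $A_c < A_b$ requires the analog of Lemma~\ref{claim2.2} with the roles of $b$ and $c$ interchanged.

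For that analog, I would first invoke Bhakta's existence theorem~\cite{MR2934676} to obtain a nonnegative extremal $u_c \in \mathcal{D}_a^{1,p}(\mathbb{R}^N\backslash\{|y|=0\})$ attaining the infimum $1/K(N,p,\mu,a,c)$. With the normalization $\|u_c\| = 1$, I would then repeat the computation of Lemma~\ref{claim2.2} verbatim after swapping $b \leftrightarrow c$: introduce the auxiliary function
\begin{align*}
f_2(t) \equiv \frac{t^p}{p}-\frac{t^{p^*(a,c)}}{p^*(a,c)}\int_{\mathbb{R}^N}\frac{u_c^{p^*(a,c)}}{|y|^{cp^*(a,c)}}\,dz,
\end{align*}
whose supremum over $t \geqslant 0$ equals $A_c$, and observe that
\begin{align*}
\varphi(tu_c) = f_2(t) - \frac{t^{p^*(a,b)}}{p^*(a,b)}\int_{\mathbb{R}^N}\frac{u_c^{p^*(a,b)}}{|y|^{bp^*(a,b)}}\,dz.
\end{align*}
Since the subtracted term is strictly positive for $t > 0$, the same contradiction argument used in Lemma~\ref{claim2.2} (assuming the equality $d_{u_c} = A_c$ holds at the maximizer $t_0$ forces $f_2(t_{\max}) < f_2(t_0)$, contradicting the maximality of $t_{\max}$) yields the strict inequality $d_{u_c} < A_c = d_*$. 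Positivity $d_{u_c} > 0$ is already built into the mountain-pass definition of $d_u$ given in~\eqref{eq:du}.

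The main technical point to verify is that Bhakta's existence result genuinely applies to $1/K(N,p,\mu,a,c)$ in each of the three regimes of Theorem~\ref{teo:existencia}. Since $a < c < a+1$ in all three cases, we are always in Bhakta's regime $a < b < a+1$ (reading $b$ as $c$), and the restrictive lower bound $\mu^*$\,---\,which enters only when the weight exponent of the nonlinearity coincides with that of the differential operator\,---\,does not intervene. Thus the extremal $u_c$ exists whenever $\mu < \bar{\mu}$, which is part of the hypotheses of Theorem~\ref{teo:existencia}, and the proof concludes by choosing $u = u_b$ as in Lemma~\ref{claim2.2} when $A_b \leqslant A_c$ and $u = u_c$ when $A_c < A_b$.
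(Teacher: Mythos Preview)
Your proposal is correct and follows essentially the same approach as the paper: split into the two cases $d_* = A_b$ and $d_* = A_c$, apply Lemma~\ref{claim2.2} directly in the first case, and in the second case use a nonnegative extremal for $1/K(N,p,\mu,a,c)$ together with the auxiliary function $f_2$ to repeat the argument with $b$ and $c$ interchanged. Your explicit verification that Bhakta's existence theorem applies to the $c$-infimum (because $a<c<a+1$ in all three regimes, so the restrictive bound $\mu^*$ does not enter) is a detail the paper leaves implicit.
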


\begin{proof}
If
\(
d\sb{*} = [1/p -1/p^*(a,b)] K(N,p,\mu,a,b)^{-p^*(a,b)/(p^*(a,b)-p)}
\)
then it suffices to consider the function 
$u \in \mathcal{D}_a^{1,p}(\mathbb{R}^N\backslash\{|y|=0\})\backslash\{0\}$ 
given in Lemma~\ref{claim2.2} and we get $d_u < d^*$.
Otherwise,
let $u \in \mathcal{D}_a^{1,p}(\mathbb{R}^N\backslash\{|y|=0\})$ be a nonnegative function such 
that the infimum \( 1/K(N,p,\mu,a,c) \) defined by~\eqref{imersaolp*(a,b)} is attained and let the function \( f\sb{2} \colon \mathbb{R}\sb{*}\sp{+} \to \mathbb{R} \) be defined by 
\begin{align*}
f\sb{2}(t) \equiv \frac{t^p}{p}\left(\int\sb{\mathbb{R}^N}\frac{|\nabla u|^{p}}{|y|^{ap}}\, dz
-\frac{\mu}{p}\int\sb{\mathbb{R}^N}\frac{|u|^p}{|y|^{p(a+1)}}\, dz\right)
-\frac{t^{p^*(a,c)}}{p^*(a,c)}\int\sb{\mathbb{R}^N}\frac{|u|^{p^*(a,c)}}{|y|^{cp^*(a,c)}}\, dz.
\end{align*}
Arguing as in the proof of Lemma~\ref{claim2.2} we obtain the inequality $d_u < d^*$. 
Finally, the mountain pass theorem guarantees that $d_u > 0$. 
This concludes the proof of the lemma.
\end{proof}
\begin{proof}[Proof of Proposition~\ref{sequenciaps}]
By Lemma~\ref{claim2.1} the energy functional $\varphi$ verifies the hypotheses of the mountain pass theorem. 
Hence, there exists a Palais-Smale sequence 
$(u_n)_{n\in\mathbb{N}}\subset \mathcal{D}_a^{1,p}(\mathbb{R}^N\backslash\{|y|=0\})$  
for the functional $\varphi$ at the level $d$; 
and by Lemma~\ref{claim2.3}, we conclude that $d<d_*$. The proposition is proved.
\end{proof}

\section{Sequences weakly convergent to zero}
\label{sec:convergenciafracazero}
Now we are going to study the behavior of the Palais-Smale sequences.

\begin{prop}
\label{convergenciapara0}
Let \( (u_n)_{n\in\mathbb{N}} \subset \mathcal{D}_a^{1,p}(\mathbb{R}^N\backslash\{|y|=0\}) \)
be a Palais-Smale sequence for the functional \( \varphi \) 
at a level \( d \) such that 
\(0 < d < d_* \) as in Proposition~\ref{sequenciaps} 
and suppose that the hypotheses on one of the items of 
Theorem~\ref{teo:existencia} are valid. 
If $u_n \rightharpoonup 0$ weakly in $\mathcal{D}_a^{1,p}(\mathbb{R}^N\backslash\{|y|=0\})$ 
as $n \to \infty$, then
for every  $\delta >0$ one of the following claims is valid.
\begin{enumerate}[label={\upshape(\arabic*)}, 
align=left, widest=2, leftmargin=*]
\item
\(\displaystyle\lim_{n \to \infty} \int_{B_\delta(0)}\frac{(u_n)_+^{p*(a,b)}}{|y|^{bp*(a,b)}}\, dz  =0\) 
and
\(\displaystyle\lim_{n \to \infty} \int_{B_\delta(0)}\frac{(u_n)_+^{p*(a,c)}}{|y|^{cp*(a,c)}}\, dz =0\).
\item
\(\displaystyle\limsup_{n \to \infty} \int_{B_\delta(0)}\frac{(u_n)_+^{p*(a,b)}}{|y|^{bp*(a,b)}}\, dz  \geqslant \epsilon_0\)
and  
\(\displaystyle\limsup_{n \to \infty} \int_{B_\delta(0)}\frac{(u_n)_+^{p*(a,c)}}{|y|^{cp*(a,c)}}\, dz  \geqslant \epsilon_0\)
for some number
\(\epsilon_0 = \epsilon_0(N,p,\mu,c,d) > 0\).
\end{enumerate}
\end{prop}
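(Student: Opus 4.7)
The plan is to analyze the Palais-Smale sequence $(u_n)$ via a concentration-compactness decomposition on $\overline{B_\delta(0)}$, and to exploit the level bound $d<d_*$ to rule out \emph{one-sided} concentration where only one of the two critical integrands develops mass. Since $\|u_n\|$ is bounded, after passing to a subsequence I may assume $u_n\to 0$ a.e.\ and that the Radon measures
\[
\sigma_n^{(m)} := \frac{(u_n)_+^{p^*(a,m)}}{|y|^{mp^*(a,m)}}\,dz\ (m=b,c),\qquad \rho_n := \left(\frac{|\nabla u_n|^p}{|y|^{ap}}-\mu\frac{|u_n|^p}{|y|^{p(a+1)}}\right)dz
\]
converge weakly-$\ast$ on $\overline{B_\delta(0)}$ to limits $\nu_b,\nu_c,\rho$. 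Weighted Rellich compactness away from $\{|y|=0\}$ together with the a.e.\ convergence forces $\nu_b,\nu_c$ to be supported on $\{|y|=0\}\cap\overline{B_\delta(0)}$, and a Lions-type second-concentration-compactness argument based on Maz'ya's inequality~\eqref{eq:maz} would yield an at-most-countable family of concentration points $\{z_j\}\subset\{|y|=0\}\cap\overline{B_\delta(0)}$ with masses $\nu_{b,j},\nu_{c,j},\rho_j\geq 0$ satisfying $\nu_{m,j}\leq K(N,p,\mu,a,m)^{p^*(a,m)/p}\rho_j^{p^*(a,m)/p}$ for $m\in\{b,c\}$.

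The heart of the argument is to derive the local energy identity $\rho_j=\nu_{b,j}+\nu_{c,j}$ at each concentration point, by testing $\langle \varphi'(u_n),\eta_j^p u_n\rangle = o(1)$ with $\eta_j\in C_c^\infty(B_r(z_j))$ equal to $1$ on $B_{r/2}(z_j)$, letting first $n\to\infty$ and then $r\to 0^+$. Granted this identity, suppose for contradiction that some $z_j\in\overline{B_\delta(0)}$ has $\nu_{b,j}>0$ but $\nu_{c,j}=0$. Then $\rho_j=\nu_{b,j}$, and the Maz'ya bound forces $\nu_{b,j}\geq K(N,p,\mu,a,b)^{-p^*(a,b)/(p^*(a,b)-p)}$. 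Passing to the limit in $\varphi(u_n)-(1/p)\langle\varphi'(u_n),u_n\rangle=d+o(1)$ and isolating the contribution of this point yields
\[
d\geq \left(\frac{1}{p}-\frac{1}{p^*(a,b)}\right)\nu_{b,j}\geq \left(\frac{1}{p}-\frac{1}{p^*(a,b)}\right)K(N,p,\mu,a,b)^{-\frac{p^*(a,b)}{p^*(a,b)-p}}\geq d_*,
\]
contradicting $d<d_*$. The symmetric case $\nu_{c,j}>0$, $\nu_{b,j}=0$ is ruled out the same way using the $c$-term in the minimum defining $d_*$.

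With this dichotomy at every concentration point in place, the proposition follows. If no concentration point lies in $\overline{B_\delta(0)}$ (and the diffuse components of $\nu_b,\nu_c$ vanish there, by the same Maz'ya bound applied pointwise on their supports), then both integrals over $B_\delta(0)$ tend to zero, yielding alternative~(1). Otherwise some $z_j\in\overline{B_\delta(0)}$ has $\nu_{b,j},\nu_{c,j}>0$; combining both Maz'ya bounds with $\rho_j=\nu_{b,j}+\nu_{c,j}$ gives the quantization $\rho_j\geq \rho_*(N,p,\mu,b,c)>0$, while $d\geq (1/p-1/p^*(a,c))\rho_j$ together with $d<d_*\leq (1/p-1/p^*(a,c))K(N,p,\mu,a,c)^{-p^*(a,c)/(p^*(a,c)-p)}$ gives $\rho_j<K(N,p,\mu,a,c)^{-p^*(a,c)/(p^*(a,c)-p)}$; these two inequalities combined with the upper bound $\nu_{c,j}\leq K(N,p,\mu,a,c)^{p^*(a,c)/p}\rho_j^{p^*(a,c)/p}$ produce a strictly positive $\epsilon_0=\epsilon_0(N,p,\mu,c,d)$ with $\nu_{b,j},\nu_{c,j}\geq\epsilon_0$, which yields alternative~(2). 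The main obstacle will be the rigorous derivation of the local identity $\rho_j=\nu_{b,j}+\nu_{c,j}$, because the support of $\nabla\eta_j$ meets the singular cylinder $\{|y|=0\}$ and so the cross term $\int\eta_j^{p-1}u_n|\nabla u_n|^{p-2}\nabla u_n\cdot\nabla\eta_j\,|y|^{-ap}\,dz$ is delicate; I would control it by combining Hölder's inequality with the Hardy-type bound $\bar\mu\int|u|^p/|y|^{p(a+1)}\leq\int|\nabla u|^p/|y|^{ap}$ and a diagonal passage $r\to 0^+$ after $n\to\infty$.
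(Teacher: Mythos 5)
Your strategy is a genuine alternative to the paper's. The paper never performs a concentration-compactness decomposition: it works directly with the three global quantities
\(\alpha=\limsup_n\int_{B_\delta(0)}(u_n)_+^{p^*(a,b)}|y|^{-bp^*(a,b)}\,dz\),
\(\beta=\limsup_n\int_{B_\delta(0)}(u_n)_+^{p^*(a,c)}|y|^{-cp^*(a,c)}\,dz\) and the corresponding gradient/Hardy quantity \(\gamma\), proves local compactness away from the cylinder (Lemma~\ref{claim3.1}), the Maz'ya-type bounds \(\alpha^{p/p^*(a,b)}\le K(N,p,\mu,a,b)\gamma\), \(\beta^{p/p^*(a,c)}\le K(N,p,\mu,a,c)\gamma\) (Lemma~\ref{claim3.2}) and the energy inequality \(\gamma\le\alpha+\beta\) (Lemma~\ref{claim3.3}), and then closes with exactly the algebra you run at a single concentration point, using the level bound \(\alpha\le d(1/p-1/p^*(a,b))^{-1}\) from~\eqref{17}. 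Your route, via an atomic Lions decomposition with local masses \(\nu_{b,j},\nu_{c,j},\rho_j\) and the local identity \(\rho_j=\nu_{b,j}+\nu_{c,j}\), would give finer information (the location and quantization of the concentration), but it costs you two nontrivial lemmas the paper never needs: a weighted cylindrical version of Lions' second concentration-compactness lemma (atomicity of \(\nu_b,\nu_c\) must come from the reverse H\"older inequality with exponent \(p^*(a,m)/p>1\), since the concentration set \(\{|y|=0\}\) is a whole subspace, not a point), and the localized energy identity. You also need some care with subsequences, since the statement is about limsups of the original sequence while the measure decomposition is extracted along a subsequence; the paper's global formulation avoids this.

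The one step I would not accept as written is your treatment of the cross term in the local identity. Your proposed bound via H\"older and the Hardy inequality gives, for \(z_j\in\{|y|=0\}\) and \(\operatorname{supp}\nabla\eta_j\subset B_r(z_j)\),
\begin{align*}
\left|\int \eta_j^{p-1}u_n\,\frac{|\nabla u_n|^{p-2}}{|y|^{ap}}\,\langle\nabla u_n,\nabla\eta_j\rangle\,dz\right|
\leqslant C\,\|\nabla u_n\|_{L^p_a}^{p-1}\left(\int_{B_r(z_j)}\frac{|u_n|^p}{|y|^{p(a+1)}}\,dz\right)^{1/p},
\end{align*}
because \(|\nabla\eta_j|^p|y|^{-ap}\le (C/r)^p r^p |y|^{-p(a+1)}=C^p|y|^{-p(a+1)}\) on the support. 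This is only a uniform \(O(1)\) bound: it does not tend to zero as \(n\to\infty\) and then \(r\to 0\) unless you additionally show that the Hardy quotient \(|u_n|^p|y|^{-p(a+1)}\,dz\) does not concentrate at \(z_j\), which is precisely the kind of concentration that can occur when \(\mu\neq 0\). So you need a further ingredient here (e.g.\ a third limit measure for the Hardy term and a separate argument controlling its atoms), or you should abandon the pointwise identity and work, as the paper does, with the global inequality \(\gamma\le\alpha+\beta\) obtained from \(\langle\varphi'(u_n),\eta u_n\rangle\to 0\), which suffices for the dichotomy.
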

To prove Proposition~\ref{convergenciapara0} we establish some lemmas.

\begin{lema}
\label{claim3.1}
Let $(u_n)_{n\in\mathbb{N}} \subset \mathcal{D}_a^{1,p}(\mathbb{R}^N\backslash\{|y|=0\})$ 
be a Palais-Smale sequence as in Proposition~\ref{convergenciapara0}. 
If $u_n \rightharpoonup 0$ weakly in $\mathcal{D}_a^{1,p}(\mathbb{R}^N\backslash\{|y|=0\})$ 
as $n \to \infty$, then for every compact subset
$\omega \Subset \mathbb{R}^N\backslash\{|y|=0\}$,
up to passage to a subsequence we have
\begin{alignat}{3}
\label{10}
\lim_{n \to \infty} \int_\omega\frac{|u_n|^p}{|y|^{p(a+1)}}\, dz
& = 0 
& \quad \textrm{ and } && \quad 
\lim_{n \to \infty} \int_\omega\frac{|u_n|^{p*(a,c)}}{|y|^{cp*(a,c)}}\, dz
& = 0,\\
\lim_{n \to \infty} \int_\omega\frac{|u_n|^{p*(a,b)}}{|y|^{bp*(a,b)}}\, dz
& = 0 
& \quad \textrm{ and } && \quad
\lim_{n \to \infty} \int_\omega\frac{|\nabla u_n|^p}{|y|^{ap}}\, dz 
& = 0. \label{11}
\end{alignat}
\end{lema}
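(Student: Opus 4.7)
The strategy combines the local Rellich–Kondrachov embedding (since weights are bounded on compact sets disjoint from $\{|y|=0\}$) with a Palais–Smale test against a cutoff of $u_n$, Maz'ya's inequality~\eqref{eq:maz}, and the level bound $d<d_*$ from Proposition~\ref{sequenciaps}, in order to exclude concentration at interior points of $\omega$ and so obtain strong local convergence.

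Fix a compact $\tilde\omega$ with $\omega\Subset\tilde\omega\Subset\mathbb{R}^N\setminus\{|y|=0\}$ (so that the weights $|y|^{-\alpha}$ are uniformly bounded on $\tilde\omega$) and a cutoff $\eta\in C_c^\infty(\mathbb{R}^N)$ with $0\leq\eta\leq 1$, $\eta\equiv 1$ on $\omega$, $\mathrm{supp}(\eta)\subset\tilde\omega$. Boundedness of the weights on $\tilde\omega$ yields a continuous embedding $\mathcal{D}_a^{1,p}(\mathbb{R}^N\setminus\{|y|=0\})\hookrightarrow W^{1,p}(\tilde\omega)$, so $u_n\rightharpoonup 0$ weakly in $W^{1,p}(\tilde\omega)$; by Rellich–Kondrachov, after extraction $u_n\to 0$ strongly in $L^q(\tilde\omega)$ for every $q\in[1,p^*)$, $p^*=Np/(N-p)$. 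Since $p<p^*$, this gives $\int_\omega|u_n|^p/|y|^{p(a+1)}\,dz\to 0$. Because the hypotheses of Theorem~\ref{teo:existencia} force $c>a$ in every case, we have $p^*(a,c)<p^*$ and likewise $\int_\omega|u_n|^{p^*(a,c)}/|y|^{cp^*(a,c)}\,dz\to 0$. This establishes~\eqref{10}.

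For the critical $b$-term and the gradient term in~\eqref{11}, I test the Palais–Smale condition with $\eta^{p^*(a,b)}u_n$, a function of uniformly bounded norm in $\mathcal{D}_a^{1,p}$. Expanding $\langle\varphi'(u_n),\eta^{p^*(a,b)}u_n\rangle\to 0$, estimating the cross-derivative term by Hölder's inequality together with the strong $L^p$-convergence above, and using the vanishing of the $\mu$-term and of the $L^{p^*(a,c)}$-term, one obtains
\[
\int_{\mathbb{R}^N}\eta^{p^*(a,b)}\frac{|\nabla u_n|^p}{|y|^{ap}}\,dz=\int_{\mathbb{R}^N}\eta^{p^*(a,b)}\frac{(u_n)_+^{p^*(a,b)}}{|y|^{bp^*(a,b)}}\,dz+o(1).
\]
Applying Maz'ya's inequality~\eqref{eq:maz} to $\eta u_n$ and expanding $|\nabla(\eta u_n)|^p$ via the pointwise bound $\bigl|\,|a+b|^p-|a|^p\bigr|\leq C_p\bigl(|a|^{p-1}|b|+|b|^p\bigr)$ combined with the $L^p$-strong convergence (a Brezis–Lieb-type argument) yields
\[
\left(\int_{\mathbb{R}^N}\eta^{p^*(a,b)}\frac{|u_n|^{p^*(a,b)}}{|y|^{bp^*(a,b)}}\,dz\right)^{\!p/p^*(a,b)}\leq K(N,p,\mu,a,b)\int_{\mathbb{R}^N}\eta^p\frac{|\nabla u_n|^p}{|y|^{ap}}\,dz+o(1).
\]

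Writing $A_n=\int\eta^{p^*(a,b)}(u_n)_+^{p^*(a,b)}/|y|^{bp^*(a,b)}\,dz$ and reconciling the mismatch between $\eta^p$ and $\eta^{p^*(a,b)}$ via a nested pair of cutoffs $\eta\leq\eta_1$ with $\eta_1\equiv 1$ on $\mathrm{supp}(\eta)$, the two preceding displays combine to give the dichotomy: either $A_n\to 0$, or $\liminf_n A_n\geq K(N,p,\mu,a,b)^{-p^*(a,b)/(p^*(a,b)-p)}$. The second alternative is excluded by the Palais–Smale level identity
\[
d+o(1)=\varphi(u_n)-\tfrac{1}{p}\langle\varphi'(u_n),u_n\rangle=\sum_{m\in\{b,c\}}\left(\tfrac{1}{p}-\tfrac{1}{p^*(a,m)}\right)\int_{\mathbb{R}^N}\frac{(u_n)_+^{p^*(a,m)}}{|y|^{mp^*(a,m)}}\,dz,
\]
which, combined with $A_n\leq\int_{\mathbb{R}^N}(u_n)_+^{p^*(a,b)}/|y|^{bp^*(a,b)}\,dz$ and the definition of $d_*$ in~\eqref{intervalod}, would force $d\geq d_*$, contradicting Proposition~\ref{sequenciaps}. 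Hence $A_n\to 0$, and since $\eta^{p^*(a,b)}\equiv 1$ on $\omega$, $\int_\omega(u_n)_+^{p^*(a,b)}/|y|^{bp^*(a,b)}\,dz\leq A_n\to 0$; returning to the Palais–Smale display then yields $\int_\omega|\nabla u_n|^p/|y|^{ap}\,dz\to 0$, completing~\eqref{11}. The principal obstacle is the bookkeeping between the two cutoff exponents $\eta^p$ and $\eta^{p^*(a,b)}$ arising respectively from Maz'ya's inequality and the Palais–Smale test, together with the Brezis–Lieb expansion of $|\nabla(\eta u_n)|^p$ for general $1<p<N$, both of which require care to make the dichotomy rigorous.
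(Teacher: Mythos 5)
Your treatment of~\eqref{10} is correct and is essentially the paper's argument: on a compact set away from $\{|y|=0\}$ all weights are comparable to $1$, so local Rellich--Kondrachov compactness applies, and $p<p^*$ together with $p^*(a,c)<p^*$ (which holds because $c>a$ in every case of Theorem~\ref{teo:existencia}) gives both limits.

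For~\eqref{11}, however, the key combination step has a genuine gap. Your Palais--Smale test yields $\int\eta^{p^*(a,b)}|\nabla u_n|^p|y|^{-ap}\,dz=A_n+o(1)$, while Maz'ya's inequality applied to $\eta u_n$ yields $A_n^{p/p^*(a,b)}\leq K\int\eta^{p}|\nabla u_n|^p|y|^{-ap}\,dz+o(1)$. Since $0\leq\eta\leq1$ and $p^*(a,b)>p$, one has $\eta^{p^*(a,b)}\leq\eta^{p}$, so the first display controls the \emph{smaller} of the two gradient integrals and cannot be substituted into the second. The nested cutoff only gives $\int\eta^p|\nabla u_n|^p|y|^{-ap}\,dz\leq A_n^{(1)}+o(1)$ with $A_n^{(1)}$ the analogous quantity for the \emph{larger} cutoff $\eta_1$; this produces $A_n^{p/p^*(a,b)}\leq KA_n^{(1)}+o(1)$, which relates two different quantities and never closes into the self-referential inequality $A_n^{p/p^*(a,b)}\bigl(1-KA_n^{(p^*(a,b)-p)/p^*(a,b)}\bigr)\leq o(1)$ that your dichotomy requires; iterating the nesting does not terminate, and bounding $A_n^{(1)}$ by the global integral and the level identity only gives boundedness of $A_n$, not the alternative ``$A_n\to 0$ or $\liminf A_n\geq K^{-p^*(a,b)/(p^*(a,b)-p)}$''. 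The paper avoids this by testing with $\eta^p u_n$ and applying H\"{o}lder's inequality in the factorized form $\eta^p(u_n)_+^{p^*(a,b)}=\bigl[(u_n)_+^{p^*(a,b)}\bigr]^{(p^*(a,b)-p)/p^*(a,b)}\bigl[(\eta u_n)_+^{p^*(a,b)}\bigr]^{p/p^*(a,b)}$ (with the weight split the same way), so that the \emph{same} localized quantity $\int|\nabla(\eta u_n)|^p|y|^{-ap}\,dz$ appears on both sides, multiplied by the \emph{global} factor $\bigl(\int(u_n)_+^{p^*(a,b)}|y|^{-bp^*(a,b)}\,dz\bigr)^{(p^*(a,b)-p)/p^*(a,b)}$; the level identity and $d<d_*$ then make the prefactor $1-K(\cdot)$ bounded below by a positive constant, and both limits in~\eqref{11} follow. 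You should replace the $\eta^{p^*(a,b)}$ test and the nested-cutoff dichotomy by this H\"{o}lder step (or else carry out the full concentration--compactness argument on the limit measure, which is what your dichotomy implicitly presupposes and which requires more than a single pair of cutoffs).
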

\begin{proof}
Let $\omega \Subset \mathbb{R}^N\backslash\{|y|=0\}$ be a fixed compact subset. Thus, the expression $|y|+|y|^{-1}$ is bounded for every $ z=(x,y) \in \omega$. 
Since $p^*(a,a+1)=p$, by Maz'ya's inequality~\eqref{eq:maz} and by
the compact embedding 
$\mathcal{D}_a^{1,p}(\mathbb{R}^N\backslash\{|y|=0\}) 
\hookrightarrow L_{a+1}^p(\mathbb{R}^N\backslash\{|y|=0\})$, 
it follows that the first limit in~\eqref{10} is valid. 
Likewise, using the fact that \( a \leqslant c < a+1 \) we can show that
the second limit in~\eqref{10} is valid also.

To show that the limits~\eqref{11} are valid we make some estimates.
Let $\eta \in C_c^\infty (\mathbb{R}^N\backslash\{|y|=0\}$) be a cut off function such that $0 \leqslant \eta \leqslant 1$, with 
\( \operatorname{supp} \nabla \eta \equiv \omega \). 

\begin{afirmativa}
\label{afirmativa14}
It is valid the relation
\begin{align*}
\int_{\mathbb{R}^N}\frac{|\nabla (\eta u_n)|^p}{|y|^{ap}}\, dz
= \int_{\mathbb{R}^N}\frac{|\eta \nabla u_n|^p}{|y|^{ap}}\, dz + o(1).
\end{align*}
\end{afirmativa}

\begin{proof}
Applying the inequality
$\left ||X+Y|^P-|X|^P\right | \leqslant C_p\left(|X|^{p-1}+|Y|^{p-1}\right)|Y|$
to the values
$X=|\eta\nabla u_n|/|y|^a$ and 
$Y=|u_n\nabla \eta|/|y|^{a}$, we obtain
\begin{align}
\label{desigualdadedaafirmativa1}
\left| \left| \frac{\nabla (\eta u_n)}{|y|^{a}}\right|^p-\left|\frac{|\eta\nabla u_n|}{|y|^{a}}\right|^p\right|
&\leqslant C_p\frac{|\eta\nabla u_n|^{p-1}}{|y|^{a(p-1)}}\frac{|u_n\nabla \eta|}{|y|^a}
+C_p\frac{|u_n\nabla \eta|^{p}}{|y|^{ap}}.
\end{align}

To proceed, we apply H\"{o}lder's inequality to the integral over 
\( \mathbb{R}\sp{N} \) of the first term on the right-hand side of 
inequality~\eqref{desigualdadedaafirmativa1} to obtain
\begin{align*}
\int_{\mathbb{R}^N}\frac{|\eta\nabla u_n|^{p-1}}{|y|^{a(p-1)}}\frac{|u_n\nabla \eta|}{|y|^a}\, dz
&\leqslant C\sb{p} \left(\int_{\mathbb{R}^N}\frac{|\nabla u_n|^p}{|y|^{ap}}\, dz\right)^\frac{p-1}{p} 
\left(\int_{\omega}\frac{|u_n|^p}{|y|^{p(a+1)}}\, dz\right)^\frac{1}{p} 
= o(1).
\end{align*}
On the other hand, using the first limit in~\eqref{10}, 
it follows that the integral over \( \mathbb{R}\sp{N} \)
of the second term on the right-hand side of inequality~\eqref{desigualdadedaafirmativa1} is such that
\begin{align*}
\int_{\mathbb{R}^N} \frac{|u_n\nabla \eta|^{p}}{|y|^{ap}}\, dz 
\leqslant C\int_{\omega}\frac{|u_n|^{p}}{|y|^{p(a+1)}}\, dz = o(1).
\end{align*}
for some positive constant \( C > 0 \).
Combining these results the claim follows.
\end{proof}

Recall that
$(u_n)_{n\in\mathbb{N}}$ is a Palais-Smale sequence 
and that $\eta^p u_n \in \mathcal{D}_a^{1,p}(\mathbb{R}^N\backslash\{|y|=0\})$; 
for that reason, as $n \to \infty$ we have
\begin{align}\label{ode1}
\left\langle \varphi'(u_n),\eta^p u_n \right\rangle
= o(1).
\end{align}
Now we note that
$$
\lim_{n\to+\infty} \left\|\frac{u_n}{|y|^{a+1}}\right\|_{L^p(\omega)}
=\lim_{n\to+\infty}\int_{\omega}\frac{|u_n|^p}{|y|^{p(a+1)}}\, dz=0
$$
by the first limit in~\eqref{10}, where we used 
$\omega=\operatorname{supp} \nabla \eta 
\Subset \mathbb{R}^N\backslash \{|y|=0\}$. 
Furthermore, the sequence of the norms of the gradients
$(\|\nabla u_n\|_{L^p_a(\mathbb{R}^N\backslash \{|y|=0\})})_{n\in\mathbb{N}} 
\subset \mathbb{R}$ 
is bounded due to the weak convergence
$u_n \rightharpoonup 0$ in 
$\mathcal{D}_a^{1,p}(\mathbb{R}^n\backslash\{|y|=0\})$. 
Hence, using H\"{o}lder's inequality and the fact that
$|y|$ is bounded in 
$\operatorname{supp}|\nabla\eta| = \omega $, we obtain
\begin{align*}
\int_{\mathbb{R}^N}\frac{|\nabla u_n|^{p-1}}{|y|^{ap}}|\nabla \eta^p||u_n|\, dz
&\leqslant C\left(\int_{\mathbb{R}^N}\frac{|\nabla u_n|^p}{|y|^{ap}}\, dz\right)^\frac{p-1}{p}
\left(\int_{\omega}\frac{|u_n|^p}{|y|^{p(a+1)}}\, dz\right)^\frac{1}{p} = o(1),
\end{align*}
as $n \to +\infty$. 
Then, by the limits~\eqref{10} and~\eqref{ode1}, by the previous inequality together with Claim~\ref{afirmativa14} and H\"{o}lder's inequality, it follows that
\begin{align}\label{13}
\int_{\mathbb{R}^N}\frac{|\nabla (\eta u_n)|^p}{|y|^{ap}}\, dz
& = 
\int_{\mathbb{R}^N}\frac{(u_n)_+^{p^*(a,b)}\eta ^p}{|y|^{bp^*(a,b)}}\, dz 
+ o(1) \nonumber \\
& \leqslant 
\left(\int_{\mathbb{R}^N}\frac{(u_n)_+^{p^*(a,b)}}{|y|^{bp^*(a,b)}}\, dz\right)^\frac{p^*(a,b)-p}{p^*(a,b)}
\left(\int_{\mathbb{R}^N}\frac{|\eta u_n|^{p^*(a,b)}}{|y|^{bp^*(a,b)}}\, dz\right)^\frac{p}{p^*(a,b)}+o(1) \nonumber \\
& \leqslant \left(\int_{\mathbb{R}^N}\frac{(u_n)_+^{p^*(a,b)}}{|y|^{bp^*(a,b)}}\, dz\right)^\frac{p^*(a,b)-p}{p^*(a,b)}
K(N,p,\mu,a,b)\int_{\mathbb{R}^N}\frac{|\nabla (\eta u_n)|^p}{|y|^{ap}}\, dz + o(1).
\end{align}
Consequently,
\begin{align}\label{15}
\Bigg(1-\left(\int_{\mathbb{R}^N}\frac{(u_n)_+^{p^*(a,b)}}{|y|^{bp^*(a,b)}}\, dz\right)^\frac{p^*(a,b)-p}{p^*(a,b)}
K(N,p,\mu,a,b)\Bigg)\int_{\mathbb{R}^N}\frac{|\nabla (\eta u_n)|^p}{|y|^{ap}}\, dz \leqslant o(1).
\end{align}

On the other hand, since 
$(u_n)_{n\in\mathbb{N}}\subset 
\mathcal{D}_a^{1,p}(\mathbb{R}^N\backslash\{|y|=0\})$
is a Palais-Smale sequence at a level $d$, 
direct computations show that
\begin{align}
\label{16}
d + o(1) & =
\varphi(u_n) - \dfrac{1}{p}\left\langle \varphi'(u_n),u_n \right\rangle \nonumber \\
& =
\left(\frac{1}{p}-\frac{1}{p^*(a,b)}\right)\int_{\mathbb{R}^N}\frac{(u_n)_+^{p^*(a,b)}}{|y|^{bp^*(a,b)}}\, dz
+\left(\frac{1}{p}-\frac{1}{p^*(a,c)}\right)\int_{\mathbb{R}^N}\frac{(u_n)_+^{p^*(a,c)}}{|y|^{cp^*(a,c)}}\, dz + o(1) \\
& \geqslant 
\left(\frac{1}{p}-\frac{1}{p^*(a,c)}\right)\int_{\mathbb{R}^N}\frac{(u_n)_+^{p^*(a,c)}}{|y|^{cp^*(a,c)}}\, dz + o(1) \nonumber.
\end{align}
This implies that 
\begin{align}\label{17}
&\int_{\mathbb{R}^N}\frac{(u_n)_+^{p^*(a,b)}}{|y|^{bp^*(a,b)}}\, dz 
\leqslant d\left(\frac{1}{p}-\frac{1}{p^*(a,b)}\right)^{-1} + o(1).
\end{align}
Replacing inequality~\eqref{17} in the relation~\eqref{15}, it follows that
\begin{align*}
&\Bigg( 1 - \Bigg( d \Bigg(\frac{1}{p}-\frac{1}{p^*(a,b)}\Bigg)^{-1} \Bigg)^{\frac{p*(a,b)-p}{p^*(a,b)}}K(N,p,\mu,a,b)\Bigg)
\int_{\mathbb{R}^N}\frac{|\nabla(\eta u_n)|^p}{|y|^{ap}}\, dz \leqslant o(1).
\end{align*}
Finally, to show that the factor that multiplies the previous integral is positive, we use inequality~\eqref{intervalod}.
Hence the second limit in~\eqref{11} is valid and this concludes the proof of the lemma.
\end{proof}

For a given $\delta > 0$, we define
\begin{align}
\label{18}
  \begin{array}{r@{{}\equiv{}}l@{\qquad}r@{{}\equiv{}}l}
    \alpha 
    & \displaystyle\limsup_{n \to +\infty} \int_{B_\delta (0)}\frac{(u_n)_+^{p^*(a,b)}}{|y|^{bp^*(a,b)}}\, dz, 
    & \beta 
    & \displaystyle\limsup_{n \to +\infty} \int_{B_\delta (0)}\frac{(u_n)_+^{p^*(a,c)}}{|y|^{cp^*(a,c)}}\, dz, \\[\jot]
\multicolumn{4}{c}{\gamma \equiv
\limsup_{n \to +\infty}\bigg( 
\displaystyle\int_{B_\delta (0)}\left(\frac{|\nabla u_n|^p}{|y|^{ap}} \, dz \right)
- \mu \displaystyle\int_{B_\delta (0)}
\left(\frac{|u_n|^p}{|y|^{p(a+1)}}\right) \, dz 
\bigg).}
\end{array}
\end{align}
It follows from Lemma~\ref{claim3.1} that these values are well defined and do not depend on the choice of  $\delta > 0$.

\begin{lema}\label{claim3.2}
Let $(u_n)_{n\in\mathbb{N}} \subset \mathcal{D}_a^{1,p}(\mathbb{R}^N\backslash\{|y|=0\})$ 
be Palais-Smale sequence as in 
Proposition~\ref{convergenciapara0} 
and let $\alpha$, $\beta$, and $\gamma$ 
be defined as in~\eqref{18}. 
If $u_n \rightharpoonup 0$ weakly in  $\mathcal{D}_a^{1,p}(\mathbb{R}^N\backslash\{|y|=0\})$ 
as $n \to +\infty$, then
\begin{align}\label{19}
\alpha^\frac{p}{p^*(a,b)} \leqslant K(N,p,\mu,a,b)\gamma \qquad \mbox{and} \qquad
\beta^\frac{p}{p^*(a,c)} \leqslant K(N,p,\mu,a,c)\gamma.
\end{align}
\end{lema}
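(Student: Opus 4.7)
The plan is to test the variational definition~\eqref{imersaolp*(a,b)} of $1/K(N,p,\mu,a,b)$ against the localized function $\eta_\delta u_n$, where $\eta_\delta \in C_c^\infty(\mathbb{R}^N)$ is a cutoff satisfying $0 \leqslant \eta_\delta \leqslant 1$, $\eta_\delta \equiv 1$ on $B_\delta(0)$, $\eta_\delta \equiv 0$ outside $B_{2\delta}(0)$, and $|\nabla \eta_\delta| \leqslant C/\delta$. By density of $C_c^\infty(\mathbb{R}^N)$ in $\mathcal{D}_a^{1,p}(\mathbb{R}^N\backslash\{|y|=0\})$, the infimum~\eqref{imersaolp*(a,b)} extends to this function and gives
\begin{align*}
\left(\int_{\mathbb{R}^N} \frac{|\eta_\delta u_n|^{p^*(a,b)}}{|y|^{bp^*(a,b)}}\,dz\right)^{p/p^*(a,b)}
\leqslant K(N,p,\mu,a,b)\left[\int_{\mathbb{R}^N} \frac{|\nabla(\eta_\delta u_n)|^p}{|y|^{ap}}\,dz - \mu \int_{\mathbb{R}^N} \frac{|\eta_\delta u_n|^p}{|y|^{p(a+1)}}\,dz\right].
\end{align*}
I would then pass to $\limsup_{n \to \infty}$ and identify the two sides with $\alpha^{p/p^*(a,b)}$ and $K(N,p,\mu,a,b)\gamma$, respectively. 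The companion bound $\beta^{p/p^*(a,c)} \leqslant K(N,p,\mu,a,c)\gamma$ follows by repeating the argument with $(b,p^*(a,b))$ replaced by $(c,p^*(a,c))$.

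For the left-hand side, the pointwise estimate $|\eta_\delta u_n|^{p^*(a,b)} \geqslant (\eta_\delta)^{p^*(a,b)}(u_n)_+^{p^*(a,b)} \geqslant \mathbf{1}_{B_\delta(0)}(u_n)_+^{p^*(a,b)}$ produces exactly the integral whose $\limsup$ is $\alpha$ by definition~\eqref{18}. For the right-hand side, I would expand the gradient as in Claim~\ref{afirmativa14}, applying the elementary inequality $\bigl||X+Y|^p - |X|^p\bigr| \leqslant C_p(|X|^{p-1}+|Y|^{p-1})|Y|$ with $X = \eta_\delta\nabla u_n/|y|^a$ and $Y = u_n\nabla\eta_\delta/|y|^a$ in order to replace $\int |\nabla(\eta_\delta u_n)|^p/|y|^{ap}$ by $\int (\eta_\delta)^p |\nabla u_n|^p/|y|^{ap}$ up to error terms controlled by H\"older's inequality and the strong $L^p_{a+1}$-convergence $u_n \to 0$ on compact subsets of $\mathbb{R}^N\backslash\{|y|=0\}$ provided by Lemma~\ref{claim3.1}. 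The sandwich $\mathbf{1}_{B_\delta(0)} \leqslant (\eta_\delta)^p \leqslant \mathbf{1}_{B_{2\delta}(0)}$, applied according to the sign of $\mu$, then yields the upper bound
\[
\int_{B_{2\delta}(0)} \frac{|\nabla u_n|^p}{|y|^{ap}}\,dz - \mu \int_{B_{\sigma(\mu)}(0)} \frac{|u_n|^p}{|y|^{p(a+1)}}\,dz + o(1),
\]
where $\sigma(\mu) = \delta$ if $\mu \geqslant 0$ and $\sigma(\mu) = 2\delta$ if $\mu < 0$; invoking the $\delta$-independence of $\gamma$ asserted after~\eqref{18} identifies this limit with $\gamma$.

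The main obstacle is the error-term estimate just described: when $k < N$, the set $B_{2\delta}(0)\setminus B_\delta(0)$ containing $\operatorname{supp}\nabla\eta_\delta$ still meets the singular set $\{|y|=0\}$, so Lemma~\ref{claim3.1} and the reasoning of Claim~\ref{afirmativa14} do not apply verbatim. I would resolve this by introducing a second cutoff $\chi_\epsilon \in C^\infty(\mathbb{R}^N)$ depending only on $|y|$, with $\chi_\epsilon \equiv 0$ on $\{|y| \leqslant \epsilon\}$ and $\chi_\epsilon \equiv 1$ on $\{|y| \geqslant 2\epsilon\}$, and running the argument with $\eta_\delta\chi_\epsilon u_n$ in place of $\eta_\delta u_n$; now $\operatorname{supp}\nabla(\eta_\delta\chi_\epsilon)$ is genuinely compact in $\mathbb{R}^N\backslash\{|y|=0\}$, so Lemma~\ref{claim3.1} applies directly. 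Taking first $n\to\infty$ produces the desired inequality up to $\epsilon$-dependent remainders in which the integrands are dominated by fixed $L^1$ functions (via Maz'ya's inequality~\eqref{eq:maz}), and letting $\epsilon \to 0$ by dominated convergence delivers $\alpha^{p/p^*(a,b)} \leqslant K(N,p,\mu,a,b)\gamma$, completing the proof.
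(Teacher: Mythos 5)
Your argument is the paper's own: test the quotient defining $1/K(N,p,\mu,a,b)$ in~\eqref{imersaolp*(a,b)} against the truncation $\eta u_n$, pass the cutoff through the gradient via the elementary inequality of Claim~\ref{afirmativa14}, absorb the error terms using the local convergences of Lemma~\ref{claim3.1}, and take $\limsup_{n\to\infty}$ to read off~\eqref{19}; this is exactly how the paper arrives at its intermediate inequality~\eqref{19.3}, and the companion bound for $\beta$ is obtained there in the same way.

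Where you go beyond the paper is the observation that, for $k<N$, $\operatorname{supp}\nabla\eta_\delta$ is an annulus in $z$ that meets $\{|y|=0\}$, so Claim~\ref{afirmativa14} and Lemma~\ref{claim3.1} do not literally apply to it; the paper passes over this point in silence. Your double cutoff $\eta_\delta\chi_\epsilon$ is the natural repair and does make the gradient support compact in $\mathbb{R}^N\backslash\{|y|=0\}$, but the closing step \lq\lq let $\epsilon\to0$ by dominated convergence\rq\rq\ is not justified as stated: the limits in $n$ and in $\epsilon$ do not commute for free, and after $\limsup_{n\to\infty}$ has been taken the discarded contributions over $\{|y|<2\epsilon\}$ --- the missing piece of $\int (u_n)_+^{p^*(a,b)}|y|^{-bp^*(a,b)}\,dz$ needed to recover $\alpha$, and, when $\mu>0$, the Hardy remainder $\mu\int_{\{|y|<2\epsilon\}}|u_n|^p|y|^{-p(a+1)}\,dz$ --- are controlled only by the scale-invariant, non-compact inequality~\eqref{eq:maz}, so they need not vanish uniformly in $n$ as $\epsilon\to0$, and there is no fixed dominating $L^1$ function because the integrands depend on $n$. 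In short, the body of your proof faithfully reproduces the paper's, and the subtlety you flag is genuine, but the patch as described is no more rigorous than the step it is meant to repair.
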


\begin{proof}
Let $R >\delta >0$ ant let
$\eta \in C_c^\infty(\mathbb{R}^N)$ 
be a cut off function such that
$\eta|_{B_\delta(0)} \equiv 1$ and 
$\eta|_{\mathbb{R}^N\backslash B_R(0)} \equiv 0$. 
By the definition of the infimum~\eqref{imersaolp*(a,b)} 
we have
\begin{align*}
{\left(\displaystyle\int_{\mathbb{R}^N}\frac{(\eta u_n)_+^{p^*(a,b)}}{|y|^{bp^*(a,b)}}\, dz\right)^\frac{p}{p^*(a,b)}}
\leqslant K(N,p,\mu,a,b)\left( \displaystyle\int_{\mathbb{R}^N}\frac{|\nabla (\eta u_n)|^p}{|y|^{ap}} \, dz 
-\mu\int_{\mathbb{R}^N}\frac{|\eta u_n|^p}{|y|^{p(a+1)}} \, dz\right).
\end{align*}
This inequality, together with
Claim~\ref{afirmativa14} and Lemma~\ref{claim3.1} 
imply that
\begin{align}\label{19.3}
\left(\displaystyle\int_{B_\delta(0)}\frac{(u_n)_+^{p^*(a,b)}}{|y|^{bp^*(a,b)}}\, dz\right)^\frac{p}{p^*(a,b)}
& \leqslant K(N,p,\mu,a,b)
\left(\displaystyle\int_{B_\delta(0)}
\frac{|\nabla u_n|^p}{|y|^{ap}} \, dz
-\mu\int_{B_\delta(0)}\frac{|u_n|^p}{|y|^{p(a+1)}} \, dz\right) + o(1).
\end{align}
Using this inequality, we conclude that
\begin{align*}
\alpha^\frac{p}{p*(a,b)} &= \limsup_{n \to \infty} \left(\displaystyle\int_{B_\delta(0)}\frac{(u_n)_+^{p^*(a,b)}}{|y|^{bp^*(a,b)}}\, dz\right)^\frac{p}{p^*(a,b)}\\
& \leqslant \limsup_{n \to \infty} K(N,p,\mu,a,b)\left(\displaystyle\int_{B_\delta(0)}\frac{|\nabla u_n|^p}{|y|^{ap}} \, dz
-\mu\int_{B_\delta(0)}\frac{|u_n|^p}{|y|^{p(a+1)}} \, dz\right)\\
& =  K(N,p,\mu,a,b)\gamma,
\end{align*}
which is the first inequality in~\eqref{19}. The proof of the other inequality in~\eqref{19} is similar.
\end{proof}

\begin{lema}\label{claim3.3}
Let $(u_n)_{n\in\mathbb{N}} 
\subset \mathcal{D}_a^{1,p}(\mathbb{R}^N\backslash\{|y|=0\})$ be a 
Palais-Smale for the functional $\varphi$ 
at the level $d \in (0,d_*)$ 
and let $\alpha, \beta$, and $\gamma$ 
be defined as in em~\eqref{18}. 
If $u_n \rightharpoonup 0$ weakly in
$\mathcal{D}_a^{1,p}(\mathbb{R}^N\backslash\{|y|=0\})$ as $n \to +\infty$,
then $\gamma \leqslant \alpha + \beta$.
\end{lema}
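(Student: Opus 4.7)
My plan is to apply the Palais--Smale condition to the test function $\eta^p u_n$, where $\eta$ is a cutoff adapted to $B_\delta(0)$ exactly as in the proofs of Lemma~\ref{claim3.1} and Lemma~\ref{claim3.2}, and then pass to the limit.

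First, fix $R>\delta>0$ and choose $\eta\in C_c^\infty(\mathbb{R}^N)$ with $0\leqslant\eta\leqslant 1$, $\eta\equiv 1$ on $B_\delta(0)$, and $\eta\equiv 0$ outside $B_R(0)$. Since $\eta^p u_n\in\mathcal{D}_a^{1,p}(\mathbb{R}^N\backslash\{|y|=0\})$ and $(u_n)_{n\in\mathbb{N}}$ is a Palais--Smale sequence, $\langle\varphi'(u_n),\eta^p u_n\rangle=o(1)$. Expanding $\nabla(\eta^p u_n)=\eta^p\nabla u_n+p\eta^{p-1}u_n\nabla\eta$ and rearranging yields
\begin{align*}
\int_{\mathbb{R}^N}\eta^p\Bigl(\frac{|\nabla u_n|^p}{|y|^{ap}}-\mu\frac{|u_n|^p}{|y|^{p(a+1)}}\Bigr)\,dz
&=\int_{\mathbb{R}^N}\eta^p\frac{(u_n)_+^{p^*(a,b)}}{|y|^{bp^*(a,b)}}\,dz
+\int_{\mathbb{R}^N}\eta^p\frac{(u_n)_+^{p^*(a,c)}}{|y|^{cp^*(a,c)}}\,dz - E_n + o(1),
\end{align*}
where $E_n\equiv p\int_{\mathbb{R}^N}|y|^{-ap}|\nabla u_n|^{p-2}\eta^{p-1}u_n\,\nabla u_n\cdot\nabla\eta\,dz$.

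Next, I show $E_n=o(1)$. Applying H\"{o}lder's inequality exactly as in the derivation of Claim~\ref{afirmativa14},
\begin{align*}
|E_n|\leqslant C_p\Bigl(\int_{\mathbb{R}^N}\frac{|\nabla u_n|^p}{|y|^{ap}}\,dz\Bigr)^{(p-1)/p}\Bigl(\int_{\omega}\frac{|u_n|^p}{|y|^{p(a+1)}}\,dz\Bigr)^{1/p},
\end{align*}
where $\omega=\operatorname{supp}\nabla\eta$. The first factor is bounded since the weak convergence $u_n\rightharpoonup 0$ in $\mathcal{D}_a^{1,p}(\mathbb{R}^N\backslash\{|y|=0\})$ implies that $(\|\nabla u_n\|_{L^p_a})_{n\in\mathbb{N}}$ is bounded, while the second factor is $o(1)$ by Lemma~\ref{claim3.1}. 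Invoking Lemma~\ref{claim3.1} once more, each of the four integrands appearing in the identity above, when restricted to the set $B_R(0)\backslash B_\delta(0)$, has integral $o(1)$ as $n\to\infty$; combining this with $\eta^p\equiv 1$ on $B_\delta(0)$, the identity reduces to
\begin{align*}
\int_{B_\delta(0)}\Bigl(\frac{|\nabla u_n|^p}{|y|^{ap}}-\mu\frac{|u_n|^p}{|y|^{p(a+1)}}\Bigr)\,dz
=\int_{B_\delta(0)}\frac{(u_n)_+^{p^*(a,b)}}{|y|^{bp^*(a,b)}}\,dz+\int_{B_\delta(0)}\frac{(u_n)_+^{p^*(a,c)}}{|y|^{cp^*(a,c)}}\,dz+o(1).
\end{align*}

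Finally, taking $\limsup_{n\to\infty}$ of both sides, the left-hand side converges to $\gamma$ by definition~\eqref{18}, and the subadditivity of $\limsup$ applied to the right-hand side gives $\gamma\leqslant\alpha+\beta$, as claimed. The main obstacle is controlling the error term $E_n$ and the four annular contributions on $B_R(0)\backslash B_\delta(0)$; both are handled by the same mechanism as in Claim~\ref{afirmativa14} and Lemma~\ref{claim3.1}, so once the Palais--Smale relation is written out, the proof is essentially an application of those earlier results combined with the subadditivity of $\limsup$.
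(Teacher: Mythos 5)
Your proof follows essentially the same route as the paper's: pair $\varphi'(u_n)$ with a cutoff multiple of $u_n$ ($\eta^p u_n$ in your version, $\eta u_n$ in the paper's), dispose of the cross term and the contributions off $B_\delta(0)$ via the mechanism of Claim~\ref{afirmativa14} and Lemma~\ref{claim3.1}, and take $\limsup$ using subadditivity. Your write-up is actually more explicit than the paper's about the error term $E_n$; the one loose point --- invoking Lemma~\ref{claim3.1} on $B_R(0)\backslash B_\delta(0)$, which is not compactly contained in $\mathbb{R}^N\backslash\{|y|=0\}$ --- is a looseness already present in the paper's own argument (it underlies the assertion that $\alpha$, $\beta$, $\gamma$ in~\eqref{18} do not depend on $\delta$).
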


\begin{proof}
By the hypothesis on the sequence
$(u_n)_{n\in\mathbb{N}} 
\subset \mathcal{D}_a^{1,p}(\mathbb{R}^N\backslash\{|y|=0\})$,
we have
\begin{align*}
0 
&=\limsup_{n\to \infty} \left\langle \varphi'(u_n),\eta u_n
\right\rangle\\
&=\limsup_{n\to \infty} \left\{
\splitfrac{\displaystyle\int_{B_\delta(0)}
\frac{|\nabla u_n|^{p-2}}{|y|^{ap}}
\nabla u_n\nabla(\eta u_n) \, dz
-\mu \displaystyle\int_{B_\delta(0)}
\frac{|u_n|^{p-2}}{|y|^{p(a+1)}}u_n(\eta u_n) \, dz }
{- \displaystyle\int_{B_\delta(0)}
\frac{(u_n)_+^{p^*(a,b)-2}}
{|y|^{bp^*(a,b)}}(u_n)_+(\eta u_n)\, dz
- \displaystyle\int_{B_\delta(0)}
\frac{(u_n)_+^{p^*(a,c)-2}}{|y|^{cp^*(a,c)}}
(u_n)_+(\eta u_n)\, dz } \right\}.
\end{align*}
Hence, as $\eta \equiv 1$ in $B_\delta(0)$, 
we obtain $\gamma \leqslant \alpha + \beta$,
which concludes the proof of the lemma.
\end{proof}

\begin{proof}[Proof of 
Proposition~\ref{convergenciapara0}]
Let $(u_n)_{n\in\mathbb{N}} 
\subset \mathcal{D}_a^{1,p}(\mathbb{R}^N\backslash\{|y|=0\})$ 
be a Palais-Smale sequence for the functional $\varphi$
at the level $d \in (0,d_*)$. 
Lemmas~\ref{claim3.2} and~\ref{claim3.3} imply that
\begin{align*}
&\alpha^\frac{p}{p^*(a,b)}\left(1-K(N,p,\mu,a,b)\alpha^\frac{p^*(a,b)-p}{p^*(a,b)}\right)\leqslant K(N,p,\mu,a,b)\beta.
\end{align*}
Moreover, passing to the limit superior in both sides of inequality~\eqref{17}, we obtain
\begin{align*}
\alpha \leqslant d\left(\frac{1}{p}-\frac{1}{p^*(a,b)}\right)^{-1}.
\end{align*}
Combining these two inequalities, we get
$$
\alpha^\frac{p}{p^*(a,b)}
\Bigg(1-K(N,p,\mu,a,b)
\Bigg(d\left(\frac{1}{p}
-\frac{1}{p^*(a,b)}\right)^{-1}
\Bigg)^\frac{p^*(a,b)-p}{p^*(a,b)}
\Bigg)
\leqslant K(N,p,\mu,a,b)\beta.
$$
Finally, to show that the factor that multiplies the integral is positive, again we use inequality~\eqref{intervalod}.
Hence, there exists a positive constant
$\delta_1=\delta_1(N,p,\mu,a,b,d) > 0$ 
such that
$\alpha^\frac{p}{p^*(a,b)}\leqslant \delta_1 \beta$. Repeating the computations for 
$\beta$ in the place of  $\alpha$, 
we conclude that there exists a positive constant
$\delta_2=\delta_2(N,p,\mu,a,b,d) > 0$ 
such that
$\beta^\frac{p}{p^*(a,c)} \leqslant \delta_2 \alpha$. 
In particular, from these inequalities it follows that there exists a positive constant 
$\epsilon_0 =\epsilon_0(N,p,\mu,b,c,d) > 0$ 
such that
either
$\alpha = 0 $ and $\beta = 0$, 
or $\alpha \geqslant \epsilon_0$ and $\beta \geqslant \epsilon_0$. 
This concludes the proof of the proposition.
\end{proof}

\section{Conclusion of the proof of 
Theorem~\ref{teo:existencia}}
\label{conclusaoteoexistencia}

To prove Theorem~\ref{teo:existencia} we have to 
estimate both the limits superior in Proposition~\ref{convergenciapara0}.

\begin{lema}\label{claim4.1}
Let $(u_n)_{n\in\mathbb{N}} \subset \mathcal{D}_a^{1,p}(\mathbb{R}^N\backslash\{|y|=0\})$ be a Palais-Smale sequence for the functional $\varphi$ at the level 
$d \in (0,d_*)$. Then
\begin{align*}
\min\sb{m \in \{ b,c \}}
\left\{\limsup_{n\to +\infty} \int_{\mathbb{R}^N}
\frac{(u_n)_+^{p^*(a,m)}}{|y|^{bp^*(a,m)}}\, dz
\right\} > 0.
\end{align*}
\end{lema}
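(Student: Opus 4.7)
The plan is to argue by contradiction, assuming that one of the two $\limsup$'s in the statement vanishes. By the symmetry between $b$ and $c$, I would assume without loss of generality that
\[
\limsup_{n \to \infty} \int_{\mathbb{R}^N} \frac{(u_n)_+^{p^*(a,b)}}{|y|^{bp^*(a,b)}}\, dz = 0,
\]
which, since the integrand is nonnegative, forces the sequence to tend to zero. Write $I_b^{(n)}$ and $I_c^{(n)}$ for the two critical integrals $\int_{\mathbb{R}^N}(u_n)_+^{p^*(a,m)}/|y|^{mp^*(a,m)}\, dz$ with $m = b, c$.

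First I would establish that $(u_n)$ is bounded in $\mathcal{D}_a^{1,p}(\mathbb{R}^N\backslash\{|y|=0\})$. The algebraic identity already exploited in~\eqref{16},
\[
\varphi(u_n) - \tfrac{1}{p}\langle\varphi'(u_n),u_n\rangle = \left(\tfrac{1}{p} - \tfrac{1}{p^*(a,b)}\right)I_b^{(n)} + \left(\tfrac{1}{p} - \tfrac{1}{p^*(a,c)}\right)I_c^{(n)},
\]
together with $\varphi(u_n) \to d$ and $\varphi'(u_n) \to 0$ in the dual, yields $I_b^{(n)} + I_c^{(n)} \leq C + o(1)\|u_n\|$ since both coefficients are strictly positive. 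Substituting this bound into the identity $\|u_n\|^p = I_b^{(n)} + I_c^{(n)} + o(1)\|u_n\|$ (obtained by expanding $\langle\varphi'(u_n),u_n\rangle = o(1)\|u_n\|$) and using $p > 1$ forces $(\|u_n\|)$ to be bounded. Thereafter the remainders $o(1)\|u_n\|$ collapse to genuine $o(1)$.

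With $I_b^{(n)} \to 0$, the displayed identity then forces $I_c^{(n)} \to L := d/(1/p - 1/p^*(a,c)) > 0$, while $\|u_n\|^p \to L$. Because $(u_n)_+ \leq |u_n|$, the Sobolev-type inequality that arises from the definition~\eqref{imersaolp*(a,b)} with parameter $c$ gives
\[
\bigl(I_c^{(n)}\bigr)^{p/p^*(a,c)} \leq K(N,p,\mu,a,c)\,\|u_n\|^p.
\]
Passing to the limit, with $L > 0$ and $p < p^*(a,c)$, rearrangement yields $L \geq K(N,p,\mu,a,c)^{-p^*(a,c)/(p^*(a,c)-p)}$, whence
\[
d = \left(\tfrac{1}{p} - \tfrac{1}{p^*(a,c)}\right) L \geq \left(\tfrac{1}{p} - \tfrac{1}{p^*(a,c)}\right) K(N,p,\mu,a,c)^{-\frac{p^*(a,c)}{p^*(a,c)-p}} \geq d_*
\]
by the definition~\eqref{intervalod} of $d_*$. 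This contradicts the standing hypothesis $d < d_*$. The case $\limsup I_c^{(n)} = 0$ is handled identically with $b$ and $c$ interchanged.

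The main obstacle is the initial bookkeeping step of extracting boundedness of the Palais-Smale sequence from the two scalar identities without any weak-limit information (in contrast to Proposition~\ref{convergenciapara0}, where weak convergence to zero is assumed). Once boundedness is secured, the remainder is a short calculation combining the Sobolev embedding constant $K(N,p,\mu,a,c)$ with the precise form of the threshold $d_*$.
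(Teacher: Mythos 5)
Your proposal is correct and follows essentially the same route as the paper: argue by contradiction assuming one critical integral vanishes, use the identity $\varphi(u_n)-\tfrac1p\langle\varphi'(u_n),u_n\rangle$ (the paper's~\eqref{16}) to pin down the limit of the other integral, and then combine the Maz'ya--Sobolev constant $K(N,p,\mu,a,c)$ with the definition~\eqref{intervalod} of $d_*$ to reach a contradiction. The only differences are cosmetic: you verify boundedness of the Palais--Smale sequence explicitly before discarding the $o(1)\|u_n\|$ remainders (a point the paper glosses over here and only addresses later in Proposition~\ref{fprclaim4.3}), and you phrase the contradiction as $d\geqslant d_*$ rather than driving the remaining integral to zero and concluding $d=0$.
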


\begin{proof}
Without loss of generality and arguing by contradiction, we can suppose that  
\begin{align}\label{23}
0 = \limsup_{n\to +\infty} \int_{\mathbb{R}^N}\frac{(u_n)_+^{p*(a,b)}}{|y|^{bp^*(a,b)}}\, dz
\leqslant \limsup_{n\to +\infty}\int_{\mathbb{R}^N}\frac{(u_n)_+^{p*(a,c)}}{|y|^{bp^*(a,c)}}\, dz.
\end{align}
Therefore, from the fact that 
$\left\langle \varphi'(u_n), u_n \right\rangle \to 0$ as \( n \to \infty \), we obtain
\begin{align*}
\int_{\mathbb{R}^N}\frac{|\nabla u_n|^p}{|y|^{ap}} \, dz -\mu \int_{\mathbb{R}^N}\frac{|u_n|^p}{|y|^{p(a+1)}}\, dz
= \int_{\mathbb{R}^N}\frac{(u_n)_+^{p^*(a,c)}}{|y|^{cp^*(a,c)}}\, dz + o(1).
\end{align*}
Using this inequality and  the definition of the infimum~\eqref{imersaolp*(a,b)}, it follows that
\begin{align*}
&\left(\int_{\mathbb{R}^N}\frac{(u_n)_+^{p^*(a,c)}}{|y|^{cp^*(a,c)}}\, dz\right)^\frac{p}{p^*(a,c)}
 \leqslant K(N,p,\mu,a,c) \int_{\mathbb{R}^N}\frac{(u_n)_+^{p^*(a,c)}}{|y|^{cp^*(a,c)}}\, dz + o(1).
\end{align*}
Consequently,
\begin{align}\label{24}
\Bigg(\int_{\mathbb{R}^N}\frac{(u_n)_+^{p^*(a,c)}}{|y|^{cp^*(a,c)}}\, dz\Bigg)^\frac{p}{p^*(a,c)}
\Bigg(1-K(N,p,\mu,a,c)\Bigg(\int_{\mathbb{R}^N}\frac{(u_n)_+^{p^*(a,c)}}{|y|^{cp^*(a,c)}}\, dz\Bigg)^\frac{p^*(a,c)-p}{p^*(a,c)}\Bigg) & \leqslant o(1).
\end{align}

On the other hand, using equality~\eqref{16} and inequality~\eqref{23}, for $n\in\mathbb{N}$ big enough we have
$$
\int_{\mathbb{R}^N}\frac{(u_n)_+^{p^*(a,c)}}{|y|^{cp^*(a,c)}}\, dz
=d\left(\frac{1}{p}-\frac{1}{p^*(a,c)}\right)^{-1} + o(1).
$$
Replacing this equality in~\eqref{24}, it follows that
\begin{align}\label{24b}
&\left(\int_{\mathbb{R}^N}\frac{(u_n)_+^{p^*(a,c)}}{|y|^{cp^*(a,c)}}\, dz\right)^\frac{p}{p^*(a,c)}\nonumber\\
& \qquad \times\Bigg(1-K(N,p,\mu,a,c)
\Bigg(d \Bigg(\frac{1}{p}-
\frac{1}{p^*(a,c)}\Bigg)^{-1}
+ o(1)\Bigg)^\frac{p^*(a,c)-p}{p^*(a,c)}\Bigg) 
\leqslant o(1).
\end{align}
Using once again the definition~\eqref{intervalod} of $d_*$, we conclude that
$$
\lim_{n \to +\infty} \int_{\mathbb{R}^N}\frac{(u_n)_+^{p^*(a,c)}}{|y|^{cp^*(a,c)}}\, dz = 0.
$$
And this equality, together with~\eqref{16} imply that 
$d=0$, which is a contradiction with the hypothesis that 
$d>0$. This concludes the proof of the lemma.
\end{proof}

Using the value of 
$\epsilon_0 >0$ identified in Proposition~\ref{convergenciapara0} and also 
Lemma~\ref{claim4.1}, we can establish one more result that guarantees that the action of the group of transformations defined in~\eqref{eq:uhtilde} preserves the Palais-Smale sequences at the level \( d \).

\begin{lema}\label{claim4.2}
Let $(u_n)_{n\in\mathbb{N}} 
\subset \mathcal{D}_a^{1,p}(\mathbb{R}^N\backslash\{|y|=0\})$ be a Palais-Smale sequence for the functional
$\varphi$ at a level 
\( d \in (0,d_*)\) verifying the hypothesis of Proposition~\ref{convergenciapara0}. 
Then there exists \( \epsilon_1 \in (0,\epsilon_0/2] \) 
and there exist sequences
$(t_n)_{n\in\mathbb{N}} \subset \mathbb{R}$ and $(\eta_n)_{n\in\mathbb{N}} \subset \mathbb{R}^{N-k}$ such that for every $\epsilon \in (0,\epsilon_1)$, 
there exists a sequence 
$(\tilde{u}_n)_{n\in\mathbb{N}} \subset \mathcal{D}_a^{1,p}(\mathbb{R}^N\backslash\{|y|=0\})$, 
defined by~\eqref{eq:uhtilde}
that is also a Palais-Smale sequence for the functional  
$\varphi$ at the level $d$. 
Moreover, this sequence 
verifies the equality
\begin{align}\label{25}
\int_{B_1(0)}\frac{(\tilde{u}_n)_+^{p^*(a,b)}}{|y|^{bp^*(a,b)}}\, dz = \epsilon
\end{align}
for every \( n \in \mathbb{N} \).
\end{lema}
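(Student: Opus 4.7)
The plan is to combine a Lévy-type concentration function argument with the built-in invariance of the energy functional $\varphi$ under the group of transformations \eqref{eq:uhtilde}. I take $\epsilon_1 \equiv \epsilon_0/2$ with $\epsilon_0$ from Proposition~\ref{convergenciapara0}, fix $\epsilon \in (0,\epsilon_1)$, and construct $(t_n,\eta_n)$ so that the rescaled sequence $\tilde{u}_n$ carries mass exactly $\epsilon$ in $B_1(0)$ relative to the first critical density.

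For each $n$, consider the nondecreasing function
\[
\rho_n(r) \equiv \sup_{\eta \in \mathbb{R}^{N-k}} \int_{B_r((\eta,0))} \frac{(u_n)_+^{p^*(a,b)}}{|y|^{bp^*(a,b)}}\, dz.
\]
Absolute continuity of the Lebesgue integral on the $L^1$ density $g_n \equiv (u_n)_+^{p^*(a,b)}/|y|^{bp^*(a,b)}$ (finite by Maz'ya's inequality~\eqref{eq:maz}) yields continuity in $r$ with $\rho_n(0^+)=0$ and $\rho_n(\infty)=\|g_n\|_{L^1}$. By Lemma~\ref{claim4.1}, for all sufficiently large $n$ we have $\|g_n\|_{L^1} > \epsilon_0 > \epsilon$; the intermediate value theorem then provides $t_n > 0$ with $\rho_n(t_n)=\epsilon$. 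For that fixed $t_n$, the map $\eta \mapsto \int_{B_{t_n}((\eta,0))} g_n\, dz$ is continuous and decays to $0$ as $|\eta|\to\infty$ (a consequence of $g_n\in L^1$), so the supremum is attained at some $\eta_n\in\mathbb{R}^{N-k}$, and hence $\int_{B_{t_n}((\eta_n,0))} g_n\, dz = \epsilon$.

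Defining $\tilde{u}_n$ by \eqref{eq:uhtilde}, the change of variables $(\xi,w)=(t_n x + \eta_n, t_n y)$ maps $B_1(0)$ bijectively onto $B_{t_n}((\eta_n,0))$ and, because the scaling exponent $(N-p(a+1))/p$ is tailored by the Maz'ya relation $p^*(a,b)=Np/[N-p(a+1-b)]$, preserves the critical density, so identity~\eqref{25} is immediate. The same change of variables shows that $u \mapsto \tilde{u}$ is an isometry of $\mathcal{D}_a^{1,p}(\mathbb{R}^N\setminus\{|y|=0\})$ and leaves $\varphi$ and each of its three nonlinear terms invariant; therefore $\varphi(\tilde{u}_n)=\varphi(u_n)\to d$ and $\|\varphi'(\tilde{u}_n)\|_\ast = \|\varphi'(u_n)\|_\ast \to 0$, so $\tilde{u}_n$ is again a Palais-Smale sequence at the level $d$. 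The main technical point is verifying this simultaneous scaling invariance of the four integrals in $\varphi$, which reduces to checking the algebraic identity $(N-p(a+1))p^*(a,b)/p + bp^*(a,b) - N = 0$ and its analogs for the gradient, Hardy, and second critical terms; the remaining work is routine.
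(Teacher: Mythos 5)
Your overall strategy is the same as the paper's: use continuity of the concentration function in the radius, the intermediate value theorem to normalize the mass of the first critical term to $\epsilon$, and the scaling/translation invariance of all four integrals (your algebraic identity $(N-p(a+1))p^*(a,b)/p+bp^*(a,b)-N=0$ is exactly the Maz'ya relation and is correct). The refinement of taking a L\'{e}vy-type supremum over translations $\eta$ and showing it is attained is harmless but not needed; the paper simply centers the balls at the origin and takes $\eta_n$ trivial.

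There is, however, one step that does not hold as written: you claim that Lemma~\ref{claim4.1} gives $\|g_n\|_{L^1}>\epsilon_0>\epsilon$ for all sufficiently large $n$. Lemma~\ref{claim4.1} only asserts that $\lambda\equiv\limsup_{n\to\infty}\int_{\mathbb{R}^N}(u_n)_+^{p^*(a,b)}|y|^{-bp^*(a,b)}\,dz$ is strictly positive; it gives no comparison with $\epsilon_0$ (the constant $\epsilon_0$ comes from Proposition~\ref{convergenciapara0}, whose dichotomy concerns the localized integrals over $B_\delta(0)$ and, in its first alternative, is compatible with $\lambda<\epsilon_0$), and a limsup controls only a subsequence, not all large $n$. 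With your choice $\epsilon_1=\epsilon_0/2$ the intermediate value theorem could fail for $\epsilon\in(\lambda,\epsilon_0/2)$. This is precisely why the paper sets $\epsilon_1\equiv\min\{\epsilon_0/2,\lambda\}$ (the statement allows any $\epsilon_1\in(0,\epsilon_0/2]$) and first passes to a subsequence along which the total integral exceeds $\epsilon$; the same passage to a subsequence is also what lets equality~\eqref{25} hold for \emph{every} $n$, as the lemma requires. With that modification your argument is complete and coincides with the paper's.
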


\begin{proof}
Let $\lambda \equiv \displaystyle\limsup_{n \to \infty}\int_{\mathbb{R}^N}\frac{(u_n)_+^{p^*(a,b)}}{|y|^{bp^*(a,b)}}\, dz$. 
It follows from Lemma~\ref{claim4.1} 
that $\lambda > 0$. 
Let $\epsilon_1 \equiv \min\{\epsilon_0/2,\lambda\}$; for the rest of the proof we fix 
$\epsilon \in (0,\epsilon_1)$. 
Passing to a subsequence, 
still denoted by 
$(u_n)_{n\in\mathbb{N}}\subset \mathcal{D}_a^{1,p}(\mathbb{R}^N\backslash\{|y|=0\})$, 
for every $n\in\mathbb{N}$ there exists
$(r_n)_{n\in\mathbb{N}}>0$ such that
$$
\int_{B_{r_n}(0)}\frac{(u_n)_+^{p^*(a,b)}}{|y|^{bp^*(a,b)}}\, dz = \epsilon.
$$
Due to the invariance of this integral under the action of the group of homoteties and translations, it is standard to prove that the sequence 
$(\tilde{u}_n)_{n\in\mathbb{N}}
\subset \mathcal{D}_a^{1,p}
(\mathbb{R}^N\backslash\{|y|=0\})$ verifies the 
equality~\eqref{25} and also the conclusions of 
Proposition~\ref{convergenciapara0}. This concludes the proof of the lemma.
\end{proof}

\begin{prop}
\label{fprclaim4.3}
Let 
$(u_n)_{n\in\mathbb{N}} 
\subset \mathcal{D}_a^{1,p}(\mathbb{R}^N\backslash\{|y|=0\})$ 
be a Palais-Smale for the functional
$\varphi$ at the level $d$ 
and let
$(\tilde{u}_n)_{n\in\mathbb{N}} 
\subset \mathcal{D}_a^{1,p}
(\mathbb{R}^N\backslash\{|y|=0\})$ 
be the sequence defined in~\eqref{eq:uhtilde}. 
Then there exists a function 
$\tilde{u}_\infty \in \mathcal{D}_a^{1,p}(\mathbb{R}^N\backslash\{|y|=0\})$ 
such that 
$\tilde{u}_n \rightharpoonup \tilde{u}_\infty$ 
weakly in 
$\mathcal{D}_a^{1,p}(\mathbb{R}^N\backslash\{|y|=0\})$ as $n \to \infty$, possibly after passage to a subsequence. Furthermore, 
$\tilde{u}_\infty > 0$ in
$\mathbb{R^N}\backslash \{|y|=0\}$ 
and $\tilde{u}_\infty$ is a weak solution to 
problem~\eqref{problema}.
\end{prop}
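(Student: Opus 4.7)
The first step is to verify boundedness of $(\tilde u_n)$ in $\mathcal{D}_a^{1,p}(\mathbb{R}^N\setminus\{|y|=0\})$. Since $(\tilde u_n)$ is a Palais--Smale sequence at the level $d$, combining $\varphi(\tilde u_n)=d+o(1)$ with $\langle\varphi'(\tilde u_n),\tilde u_n\rangle = o(\|\tilde u_n\|)$ in the usual way yields
\begin{align*}
d+o(1)+o(\|\tilde u_n\|)
&= \Bigl(\tfrac{1}{p}-\tfrac{1}{p^*(a,b)}\Bigr)\int_{\mathbb{R}^N}\frac{(\tilde u_n)_+^{p^*(a,b)}}{|y|^{bp^*(a,b)}}\,dz
+\Bigl(\tfrac{1}{p}-\tfrac{1}{p^*(a,c)}\Bigr)\int_{\mathbb{R}^N}\frac{(\tilde u_n)_+^{p^*(a,c)}}{|y|^{cp^*(a,c)}}\,dz,
\end{align*}
and inserting this back in $\varphi(\tilde u_n)=d+o(1)$ bounds $\|\tilde u_n\|^p$. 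Reflexivity of $\mathcal{D}_a^{1,p}$ then provides a subsequence with $\tilde u_n \rightharpoonup \tilde u_\infty$, together with $\tilde u_n \to \tilde u_\infty$ a.e.\ by the compact embedding into $L^p_{a+1}$ on compact sets away from $\{|y|=0\}$.

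The next, and technically hardest, step is to show that $\tilde u_\infty$ satisfies the weak formulation~\eqref{solucaofraca}. Pointwise convergence together with the uniform $L^{p^*(a,b)}_b$ and $L^{p^*(a,c)}_c$ bounds allows one to identify the weak limits of the critical terms $(\tilde u_n)_+^{p^*(a,m)-1}/|y|^{mp^*(a,m)}$ ($m\in\{b,c\}$) via the Brezis--Lieb type argument on bounded sets, and the lower-order term involving $|y|^{-p(a+1)}$ is handled by Maz'ya's inequality~\eqref{eq:maz}. The delicate point is passing to the limit in $\int|\nabla \tilde u_n|^{p-2}\nabla\tilde u_n\cdot\nabla v\, |y|^{-ap}\,dz$; for this I plan to establish $\nabla \tilde u_n\to\nabla\tilde u_\infty$ a.e.\ by the standard Boccardo--Murat trick adapted to our weighted setting: test $\varphi'(\tilde u_n)-\varphi'(\tilde u_\infty)$ against $(\tilde u_n-\tilde u_\infty)\chi_K$ for compact $K\Subset\mathbb{R}^N\setminus\{|y|=0\}$, use the monotonicity inequality
\begin{align*}
(|\xi|^{p-2}\xi-|\eta|^{p-2}\eta)\cdot(\xi-\eta)\geqslant 0,
\end{align*}
together with the local $L^p$-compactness away from $\{|y|=0\}$ to deduce $\nabla\tilde u_n\to\nabla\tilde u_\infty$ a.e. Once this is in hand, Vitali's convergence theorem allows passage to the limit in all terms of $\langle\varphi'(\tilde u_n),v\rangle$ for $v\in C_c^\infty(\mathbb{R}^N\setminus\{|y|=0\})$, hence for every $v\in\mathcal{D}_a^{1,p}$ by density.

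The second substantial step is to rule out $\tilde u_\infty\equiv 0$. If this were the case, Proposition~\ref{convergenciapara0} applied to $(\tilde u_n)$ would give the dichotomy: either both critical integrals on $B_1(0)$ vanish, or both exceed $\epsilon_0$. But Lemma~\ref{claim4.2} was tuned so that $\int_{B_1(0)}(\tilde u_n)_+^{p^*(a,b)}/|y|^{bp^*(a,b)}\,dz=\epsilon$ with $\epsilon\in(0,\epsilon_1]$ and $\epsilon_1\leqslant\epsilon_0/2$, contradicting both alternatives. Hence $\tilde u_\infty\not\equiv 0$.

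Finally, to upgrade $\tilde u_\infty\geqslant 0$ to $\tilde u_\infty>0$, I first test~\eqref{solucaofraca} against $v=(\tilde u_\infty)_-$; since the right-hand side of~\eqref{solucaofraca} uses only the positive part, this yields $\|(\tilde u_\infty)_-\|=0$, so $\tilde u_\infty\geqslant 0$. Strict positivity in $\mathbb{R}^N\setminus\{|y|=0\}$ then follows from V\'azquez's strong maximum principle for the weighted $p$-Laplacian, applicable here because Theorem~\ref{teo:reg} guarantees $\tilde u_\infty\in L^\infty_{\mathrm{loc}}$ away from $\{|y|=0\}$ and the weight $|y|^{-ap}$ is locally smooth and positive on this set. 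The main obstacle, as anticipated in the introduction, is the a.e.\ convergence of the gradients, which must be proved by hand for the cylindrically weighted $p$-Laplacian rather than quoted from standard references.
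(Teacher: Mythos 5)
Your proposal follows essentially the same route as the paper: boundedness from the Palais--Smale condition, weak convergence by reflexivity, non-triviality by playing the normalization of Lemma~\ref{claim4.2} against the dichotomy of Proposition~\ref{convergenciapara0}, and a.e.\ convergence of the gradients via the monotonicity (Boccardo--Murat) argument, which is exactly what Lemmas~\ref{benmouloudlema2} and~\ref{benmouloudlema3} carry out with the truncations $\psi_\epsilon$ and $\psi_m$. Your treatment of strict positivity (testing with $(\tilde u_\infty)_-$ and then invoking the strong maximum principle) is a reasonable completion of a step the paper's proof leaves implicit.
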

\begin{proof}
Let $(u_n)_{n\in\mathbb{N}} 
\subset \mathcal{D}_a^{1,p}(\mathbb{R}^N\backslash\{|y|=0\})$ 
be a Palais-Smale for the functional
$\varphi$ at the level $d \in (0,d_*)$ 
and let 
$(\tilde{u}_n)_{n\in\mathbb{N}} 
\subset \mathcal{D}_a^{1,p}
(\mathbb{R}^N\backslash\{|y|=0\})$ 
be the sequence defined in~\eqref{eq:uhtilde}.

The sequence $(\tilde{u}_n)_{n\in\mathbb{N}}$ 
is bounded in $\mathcal{D}_a^{1,p}(\mathbb{R}^N\backslash\{|y|=0\})$. Indeed, since
$p < p^*(a,c) < p^*(a,b)$, we have
\begin{align*}
c_1 + c_2\|\tilde{u}_n\| &\geqslant \varphi(\tilde{u}_n) - \frac{1}{p^*(a,c)}\left\langle \varphi'(\tilde{u}_n),\tilde{u}_n\right\rangle\\
&=\left(\frac{1}{p}-\frac{1}{p^*(a,c)}\right)\|\tilde{u}_n\|^p-\left(\frac{1}{p^*(a,b)}
-\frac{1}{p^*(a,c)}\right)\|(\tilde{u}_n)_+\|_{L_b^{p^*(a,b)}}^{p^*(a,b)}\\
&\geqslant \left(\frac{1}{p}-\frac{1}{p^*(a,c)}\right)\|\tilde{u}_n\|^p,
\end{align*}
and our claim follows.
Thus, there exists a function
$\tilde{u}_\infty \in \mathcal{D}_a^{1,p}(\mathbb{R}^N\backslash\{|y|=0\})$ such that, up to passage to a subsequence still denoted in the same way, $\tilde{u}_n \rightharpoonup \tilde{u}_\infty$ weakly in 
$\mathcal{D}_a^{1,p}(\mathbb{R}^N\backslash\{|y|=0\})$ as $n \to \infty$. Additionally, we have that
$(|\nabla \tilde{u}_n|^{p-2}\nabla \tilde{u}_n)_{n\in\mathbb{N}} 
\subset (L^{p'}_a(\mathbb{R}^N))^N$, 
$(|\tilde{u}_n|^{p-2} \tilde{u}_n)_{n\in\mathbb{N}} 
\subset L^{p'}_{a+1}(\mathbb{R}^N)$, 
$(|\tilde{u}_n|^{p^*(a,b)-2} \tilde{u}_n)_{n\in\mathbb{N}} \subset L^{(p^*(a,b))'}_{b}(\mathbb{R}^N)$ and
$(|\tilde{u}_n|^{p^*(a,c)-2} \tilde{u}_n)_{n\in\mathbb{N}} \subset L^{(p^*(a,c))'}_{c}(\mathbb{R}^N)$
are bounded sequences in these spaces.
Hence, we have the following convergences:
\begin{enumerate}[label={\upshape(\arabic*)}, 
align=left, widest=3, leftmargin=*]
\item
$(|\nabla \tilde{u}_n|^{p-2}\nabla \tilde{u}_n)_{n\in\mathbb{N}} \rightharpoonup T$ 
weakly in $(L^{p'}_a(\mathbb{R}^N))^N$, 
for some $T \in (L^{p'}_a(\mathbb{R}^N))^N$;
\item
$(|\tilde{u}_n|^{p-2} \tilde{u}_n)_{n\in\mathbb{N}} \rightharpoonup |\tilde{u}_\infty|^{p-2} \tilde{u}_\infty$ 
weakly in $L^{p'}_{a+1}(\mathbb{R}^N)$;
\item
$(|\tilde{u}_n|^{p^*(a,b)-2} \tilde{u}_n)_{n\in\mathbb{N}} \rightharpoonup |\tilde{u}_\infty|^{p^*(a,b)-2} \tilde{u}_\infty$ 
weakly in  $L^{(p^*(a,b))'}_{b}(\mathbb{R}^N)$;
\item
$(|\tilde{u}_n|^{p^*(a,c)-2} \tilde{u}_n)_{n\in\mathbb{N}} \rightharpoonup |\tilde{u}_\infty|^{p^*(a,c)-2} \tilde{u}_\infty$ 
weakly in $L^{(p^*(a,c))'}_{c}(\mathbb{R}^N)$.
\end{enumerate}

Now we show that this weak limit 
$\tilde{u}_\infty$ is not identically zero. We argue by contradiction and we suppose that 
$\tilde{u}_\infty \equiv 0$. 
Applying Proposition~\ref{convergenciapara0}, by equality~\eqref{25} the first case is excluded;  
and since $0<\epsilon < \epsilon_0/2$,
again by equality~\eqref{25} we have
\begin{align*}
\epsilon_0 \leqslant \int_{B_{t_n}(0)}\frac{(u_n)_+^{p^*(a,b)}}{|y|^{bp^*(a,b)}}\, dz = \epsilon < \frac{\epsilon_0}{2},
\end{align*}
which is a contradiction. Hence, 
we necessarily have $\tilde{u}_\infty \not\equiv 0$.

In what follows we show that 
$\tilde{u}_\infty$ is a weak solution to 
problem~\eqref{problemapucciservadei}.
From the convergence 
$\varphi'(\tilde{u}_n) \to 0$ in $\left(\mathcal{D}_a^{1,p}(\mathbb{R}^N\backslash\{|y|=0\})\right)^*$ 
as $n \to +\infty$, we obtain
\begin{align}\label{4.6xuanwang2010}
o(1) 
= \left\langle \varphi'(\tilde{u}_n),v \right\rangle 
&
= \int_{\mathbb{R}^N}\frac{|\nabla \tilde{u}_n|^{p-2}}{|y|^{ap}}\left\langle\nabla \tilde{u}_n, \nabla v \right\rangle \, dz
- \mu \int_{\mathbb{R}^N}\frac{|\tilde{u}_n|^{p-2}\tilde{u}_n v}{|y|^{p(a+1)}}\, dz \nonumber\\
& \qquad - \int_{\mathbb{R}^N}\frac{(\tilde{u}_n)_+^{p^*(a,b)-2}\tilde{u}_n v}{|y|^{bp^*(a,b)}}\, dz
- \int_{\mathbb{R}^N}\frac{(\tilde{u}_n)_+^{p^*(a,c)-2}\tilde{u}_n v}{|y|^{cp^*(a,c)}}\, dz,
\end{align}
for every $v \in \mathcal{D}_a^{1,p}(\mathbb{R}^N\backslash\{|y|=0\})$. 
And from the weak convergences previously determined, it follows that
\begin{align*}
\mu \int_{\mathbb{R}^N}\frac{|\tilde{u}_n|^{p-2}\tilde{u}_n v}{|y|^{p(a+1)}}\, dz
&\to \mu \int_{\mathbb{R}^N}\frac{|\tilde{u}_\infty|^{p-2}\tilde{u}_\infty v}{|y|^{p(a+1)}}\, dz\\
\int_{\mathbb{R}^N}\frac{(\tilde{u}_n)_+^{p^*(a,b)-2}\tilde{u}_n v}{|y|^{bp^*(a,b)}}\, dz
&\to \int_{\mathbb{R}^N}\frac{(\tilde{u}_\infty)_+^{p^*(a,b)-2}\tilde{u}_\infty v}{|y|^{bp^*(a,b)}}\, dz\\
\int_{\mathbb{R}^N}\frac{(\tilde{u}_n)_+^{p^*(a,c)-2}\tilde{u}_n v}{|y|^{cp^*(a,c)}}\, dz
&\to \int_{\mathbb{R}^N}\frac{(\tilde{u}_\infty)_+^{p^*(a,c)-2}\tilde{u}_\infty v}{|y|^{cp^*(a,c)}}\, dz
\end{align*}
as $n \to \infty$ 
for every function 
$v \in \mathcal{D}_a^{1,p}(\mathbb{R}^N\backslash\{|y|=0\})$.
It remains to show that 
\begin{align}\label{4.7xuanwang2010}
\int_{\mathbb{R}^N}\frac{|\nabla \tilde{u}_n|^{p-2}}{|y|^{ap}}\left\langle\nabla \tilde{u}_n, \nabla v \right\rangle \, dz
\to 
\int_{\mathbb{R}^N}\frac{|\nabla \tilde{u}_\infty|^{p-2}}{|y|^{ap}}\left\langle\nabla \tilde{u}_\infty, \nabla v \right\rangle \, dz,
\end{align}
as $n \to +\infty$ 
for every function 
$v \in \mathcal{D}_a^{1,p}(\mathbb{R}^N\backslash\{|y|=0\})$. 
To achieve this goal, we have to show the almost everywhere convergence of the sequence of the gradients of our original sequence. This follows from 
Lemmas~\ref{benmouloudlema2} and~\ref{benmouloudlema3}. 
\end{proof}

The next two lemmas are crucial to prove the almost everywhere convergence of the sequence of the gradients of our original Palais-Smale sequence. In their proofs, we adapted the ideas from the paper by 
Benmouloud, Echarghaoui, and Sba\"{\i}~\cite{MR2524370}
and by Xuan and Wang~\cite{MR2606810}.

\begin{lema}\label{benmouloudlema2}
Suppose that the hypotheses of 
Proposition~\ref{fprclaim4.3} are valid 
and let $\eta \in C_0^\infty(\mathbb{R}^N)$ be a 
cut off function. Then, up to passage to a subsequence, we have
\begin{align*}
\limsup_{n \to \infty} 
\int_{\mathbb{R}^N}
\eta\bigg|\frac{\big\langle|\nabla \tilde{u}_n|^{p-2}\nabla \tilde{u}_n - |\nabla \tilde{u}_\infty|^{p-2}\nabla \tilde{u}_\infty, 
\nabla(\tilde{u}_n-\tilde{u}_\infty)\big\rangle}{|y|^{ap}}\bigg|^\frac{1}{p} \, dz
= 0.
\end{align*}
\end{lema}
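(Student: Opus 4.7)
The plan is to reduce the claim to an $L^1$-type estimate via H\"{o}lder's inequality, then establish that estimate by testing the Palais--Smale relation against $\eta(\tilde{u}_n - \tilde{u}_\infty)$. Set
\begin{align*}
e_n \equiv \langle |\nabla \tilde{u}_n|^{p-2}\nabla \tilde{u}_n - |\nabla \tilde{u}_\infty|^{p-2}\nabla \tilde{u}_\infty,\, \nabla(\tilde{u}_n - \tilde{u}_\infty)\rangle;
\end{align*}
the monotonicity of the field $X\mapsto |X|^{p-2}X$ gives $e_n\geqslant 0$ pointwise, so the absolute-value bars in the statement are redundant. Applying H\"{o}lder's inequality with exponents $p$ and $p/(p-1)$ to $\eta^{1/p}(e_n/|y|^{ap})^{1/p}$ and $\eta^{(p-1)/p}$ gives
\begin{align*}
\int_{\mathbb{R}^N}\eta\left(\frac{e_n}{|y|^{ap}}\right)^{1/p}dz
\leqslant \left(\int_{\mathbb{R}^N}\eta\,dz\right)^{(p-1)/p}\left(\int_{\mathbb{R}^N}\eta\,\frac{e_n}{|y|^{ap}}\,dz\right)^{1/p},
\end{align*}
so it suffices to prove that $J_n \equiv \int_{\mathbb{R}^N}\eta\, e_n/|y|^{ap}\,dz \to 0$ as $n\to\infty$.

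Next, I would exploit the Palais--Smale condition on the test function $\eta(\tilde{u}_n-\tilde{u}_\infty)$, which belongs to $\mathcal{D}_a^{1,p}(\mathbb{R}^N\backslash\{|y|=0\})$ and is bounded in norm uniformly in $n$, so $\langle\varphi'(\tilde{u}_n),\eta(\tilde{u}_n-\tilde{u}_\infty)\rangle = o(1)$. Expanding the gradient term via Leibniz and subtracting the quantity
\begin{align*}
\int_{\mathbb{R}^N} \eta\,\frac{|\nabla \tilde{u}_\infty|^{p-2}\nabla\tilde{u}_\infty\cdot\nabla(\tilde{u}_n-\tilde{u}_\infty)}{|y|^{ap}}\,dz,
\end{align*}
which is $o(1)$ by the weak convergence $\nabla\tilde{u}_n\rightharpoonup\nabla\tilde{u}_\infty$ in $L^p_a$ tested against the fixed integrand in $(L^{p'}_a)^N$, rewrites $J_n$ as the sum of a cross term involving $\nabla\eta$, the lower-order $\mu$-term, and the two critical nonlinearity integrals. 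The cross term vanishes in the limit by H\"{o}lder, using the strong convergence $\tilde{u}_n\to\tilde{u}_\infty$ in $L^p_{a+1,\operatorname{loc}}$ (furnished by the compactness argument already used in Lemma~\ref{claim3.1}) together with the uniform bound on $\|\nabla\tilde{u}_n\|_{L^p_a}^{p-1}$. The $\mu$-term vanishes analogously.

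The main obstacle is the control of the two critical-nonlinearity integrals
\begin{align*}
\int_{\mathbb{R}^N}\frac{(\tilde{u}_n)_+^{p^*(a,m)-1}\eta\,(\tilde{u}_n-\tilde{u}_\infty)}{|y|^{mp^*(a,m)}}\,dz, \qquad m\in\{b,c\},
\end{align*}
for which ordinary weak convergence of both factors is insufficient. Here I would argue as follows: the almost-everywhere convergence $\tilde{u}_n\to\tilde{u}_\infty$ along a subsequence (extracted from the compact embedding into subcritical weighted Lebesgue spaces via a diagonal argument) combined with the boundedness of $((\tilde{u}_n)_+^{p^*(a,m)-1})_{n\in\mathbb{N}}$ in $L^{(p^*(a,m))'}_m$ (items~(3) and~(4) listed in the proof of Proposition~\ref{fprclaim4.3}) yields weak convergence to $(\tilde{u}_\infty)_+^{p^*(a,m)-1}$. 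Splitting
\begin{align*}
(\tilde{u}_n)_+^{p^*(a,m)-1}(\tilde{u}_n-\tilde{u}_\infty) = (\tilde{u}_n)_+^{p^*(a,m)} - (\tilde{u}_n)_+^{p^*(a,m)-1}\tilde{u}_\infty,
\end{align*}
the second piece integrated against $\eta/|y|^{mp^*(a,m)}$ converges to $\int\eta(\tilde{u}_\infty)_+^{p^*(a,m)}/|y|^{mp^*(a,m)}\,dz$ by the weak convergence paired with the fixed $\eta\tilde{u}_\infty\in L^{p^*(a,m)}_m$, while the first piece converges to the same limit via a localized Br\'ezis--Lieb argument, provided the critical mass of $(\tilde{u}_n)_+$ does not concentrate at a point of $\operatorname{supp}\eta$. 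This exclusion of concentration is precisely the content of the normalization in Lemma~\ref{claim4.2}, which pins the $L^{p^*(a,b)}_b$-mass of $(\tilde{u}_n)_+$ on $B_1(0)$ at the prescribed value $\epsilon$, preventing both escape to infinity and collapse onto a single point. Combining all the above estimates yields $J_n\to 0$, and the H\"{o}lder bound of the first paragraph concludes the argument. The principal technical difficulty lies precisely in the handling of the critical-growth pairings, where the concentration-compactness selection afforded by Lemma~\ref{claim4.2} is essential.
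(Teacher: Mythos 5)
There is a genuine gap, and it occurs at the very first step. Your H\"{o}lder reduction is valid as an implication, but it replaces the claim of the lemma by the strictly stronger statement $J_n=\int_{\mathbb{R}^N}\eta\,e_n/|y|^{ap}\,dz\to 0$, which is (local) \emph{strong} convergence of $\nabla\tilde{u}_n$ to $\nabla\tilde{u}_\infty$ in $L^p_a$. In a critical problem this is exactly what one cannot expect to get for free from a Palais--Smale sequence: a concentration bubble inside $\operatorname{supp}\eta$ would keep $J_n$ bounded away from zero while still being compatible with the $\tfrac1p$-power statement of the lemma, because the bubble lives on a set of vanishing measure and the exponent $\tfrac1p<1$ together with H\"{o}lder kills its contribution to $\int\eta\,(e_n/|y|^{ap})^{1/p}\,dz$. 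The whole point of stating the lemma with the power $\tfrac1p$ (and of the Boccardo--Murat-type proof the paper gives) is to avoid having to prove $J_n\to0$.

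The step where your argument for $J_n\to0$ actually breaks is the treatment of the critical pairings. You need $\int\eta\,(\tilde{u}_n)_+^{p^*(a,m)}/|y|^{mp^*(a,m)}\,dz\to\int\eta\,(\tilde{u}_\infty)_+^{p^*(a,m)}/|y|^{mp^*(a,m)}\,dz$, and by Br\'{e}zis--Lieb this requires the defect term $\int\eta\,(\tilde{u}_n-\tilde{u}_\infty)_+^{p^*(a,m)}/|y|^{mp^*(a,m)}\,dz$ to vanish, i.e.\ no loss of critical mass on $\operatorname{supp}\eta$. You attribute this exclusion to the normalization of Lemma~\ref{claim4.2}, but that lemma only fixes the $L^{p^*(a,b)}_b$-mass of $(\tilde{u}_n)_+$ on $B_1(0)$ at the value $\epsilon$; it neither prevents that mass (or the mass outside $B_1(0)$) from concentrating at a point, nor says anything about the support of the arbitrary cutoff $\eta$ in the present lemma. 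The paper's proof circumvents all of this: it decomposes the domain into $E_m$, $A^+_{n,m,\epsilon}$, $A^-_{n,m,\epsilon}$, controls the first two by their small measure (this is where the power $\tfrac1p$ is essential), and on $A^-_{n,m,\epsilon}$ tests $\varphi'(\tilde{u}_n)$ against the \emph{truncated} function $\eta\,\psi_\epsilon(\tilde{u}_n-\psi_m(\tilde{u}_\infty))$, whose uniform bound $|\psi_\epsilon|\leqslant\epsilon$ makes the critical terms $O(\epsilon)$ by H\"{o}lder and boundedness of the norms, with no concentration analysis needed. If you want to salvage your route you must either prove the absence of concentration on $\operatorname{supp}\eta$ (which the paper never does and which is not needed) or introduce the truncations.
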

\begin{proof}
Let us cover the space $\mathbb{R}^N$ 
by a sequence of balls
$(B_{R_j})_{j\in \mathbb{N}} \subset \mathbb{R}^N$ centered at the origin, where the sequence of radii $(R_j)_{j\in \mathbb{N}} \subset \mathbb{R}$ is crescent. Let us also fix 
$j\in \mathbb{N}$ and consider the cut off function
$\eta \in C_0^\infty (B_{R_{j+1}})$ 
such that $0 \leqslant \eta \leqslant 1$ 
and also $\eta (z) = 1$ for every $z \in B_{R_{j}}$.

We recall that
for every
$X, Y \in \mathbb{R}^N$ 
it is valid the inequality
$\langle |X|^{p-2}X-|Y|^{p-2}Y,X-Y\rangle \geqslant 0$, 
which follows from the monotonicity of the function
$t \mapsto |t|^{p-2}t$. Moreover, the equality is valid if, and only if, $X = Y$.  Hence,
\begin{align}\label{4.8xuanwang2010}
\big\langle|\nabla \tilde{u}_n|^{p-2}\nabla \tilde{u}_n - |\nabla \tilde{u}_\infty|^{p-2}\nabla \tilde{u}_\infty, \nabla(\tilde{u}_n-\tilde{u}_\infty)\big\rangle
\geqslant 0.
\end{align}

Given 
$0 < \epsilon < 1$, $n \in \mathbb{N}$ 
and $m \in \mathbb{N}$ with $ m\geqslant 1$, 
we define the subsets
\begin{align*}
E_m &\equiv \left\{z \in \mathbb{R}^N \colon 
|\tilde{u}_\infty(z)| > m \right\}, \\
A^+_{n,m,\epsilon} &\equiv \left\{z \in \mathbb{R}^N
\colon |\tilde{u}_\infty(z)| \leqslant m \textrm{ and } 
|\tilde{u}_n(z)- \tilde{u}_\infty(z)| \geqslant \epsilon  \right\},\\
A^-_{n,m,\epsilon} &\equiv \left\{z \in \mathbb{R}^N
\colon |\tilde{u}_\infty(z)| \leqslant m \textrm{ and } 
|\tilde{u}_n(z)- \tilde{u}_\infty(z)| < \epsilon  \right\}.\\
\end{align*}
In this way, we have
\begin{align}\label{benmouloud2}
&\int_{\mathbb{R}^N}
\eta\bigg|\frac{\big\langle|\nabla \tilde{u}_n|^{p-2}\nabla \tilde{u}_n - |\nabla \tilde{u}_\infty|^{p-2}\nabla \tilde{u}_\infty, 
\nabla(\tilde{u}_n-\tilde{u}_\infty)\big\rangle}{|y|^{ap}}\bigg|^\frac{1}{p} \, dz\nonumber\\
&\qquad = \int_{E_m}
\eta\bigg|\frac{\big\langle|\nabla \tilde{u}_n|^{p-2}\nabla \tilde{u}_n - |\nabla \tilde{u}_\infty|^{p-2}\nabla \tilde{u}_\infty, 
\nabla(\tilde{u}_n-\tilde{u}_\infty)\big\rangle}{|y|^{ap}}\bigg|^\frac{1}{p} \, dz\nonumber\\
&\qquad \qquad +\int_{A^+_{n,m,\epsilon}}
\eta\bigg|\frac{\big\langle|\nabla \tilde{u}_n|^{p-2}\nabla \tilde{u}_n - |\nabla \tilde{u}_\infty|^{p-2}\nabla \tilde{u}_\infty, 
\nabla(\tilde{u}_n-\tilde{u}_\infty)\big\rangle}{|y|^{ap}}\bigg|^\frac{1}{p} \, dz\nonumber\\
&\qquad \qquad + \int_{A^-_{n,m,\epsilon}}
\eta\bigg|\frac{\big\langle|\nabla \tilde{u}_n|^{p-2}\nabla \tilde{u}_n - |\nabla \tilde{u}_\infty|^{p-2}\nabla \tilde{u}_\infty, 
\nabla(\tilde{u}_n-\tilde{u}_\infty)\big\rangle}{|y|^{ap}}\bigg|^\frac{1}{p} \, dz.
\end{align}

Hereafter, we use the letter $C$ to denote positive constants, which are independent  from $n$, from $m$, and from $\epsilon$; besides, these constants can change from one passage to the other in the computations. 

We remark that the sequence $(w_n)_{n\in\mathbb{N}} \subset L^1(\mathbb{R}^N)$ given by
$$
w_n \equiv \frac{\big\langle|\nabla \tilde{u}_n|^{p-2}\nabla \tilde{u}_n - |\nabla \tilde{u}_\infty|^{p-2}\nabla \tilde{u}_\infty, 
\nabla(\tilde{u}_n-\tilde{u}_\infty)\big\rangle}{|y|^{ap}}
$$
is bounded. 
Thus, by H\"{o}lder's inequality we obtain
\begin{align}\label{benmouloud3}
\int_{E_m}
\eta\bigg|\frac{\big\langle|\nabla \tilde{u}_n|^{p-2}\nabla \tilde{u}_n - |\nabla \tilde{u}_\infty|^{p-2}\nabla \tilde{u}_\infty, 
\nabla(\tilde{u}_n-\tilde{u}_\infty)\big\rangle}{|y|^{ap}}\bigg|^\frac{1}{p} \, dz
\leqslant C|E_m|^\frac{p-1}{p}.
\end{align}
Moreover, denoting by
\( \chi_A \colon \mathbb{R}\sp{N} \to \mathbb{R}\) the 
characteristic function on the subset 
\( A \subset \mathbb{R}\sp{N} \),
H\"{o}lder's and Maz'ya's inequalities imply that
\begin{align*}
|E_m| &\leqslant \frac{1}{m} \int_{E_m} \chi_{E_m} \frac{\tilde{u}_\infty(z)}{|y|^{a+1}} \, dz
\leqslant \frac{1}{m} C |E_m|^\frac{p-1}{p},
\end{align*}
that is,
\( |E_m| \leqslant C/m^p \).
Combining this result with inequality~\eqref{benmouloud3} we obtain an estimate 
for the first term on the right-hand side of 
equality~\eqref{benmouloud2}, 
namely,
\begin{align}\label{benmouloud4}
\int_{E_m}
\eta\bigg|\frac{\big\langle|\nabla \tilde{u}_n|^{p-2}\nabla \tilde{u}_n - |\nabla \tilde{u}_\infty|^{p-2}\nabla \tilde{u}_\infty, 
\nabla(\tilde{u}_n-\tilde{u}_\infty)\big\rangle}{|y|^{ap}}\bigg|^\frac{1}{p} \, dz
\leqslant \frac{C}{m^{p-1}}.
\end{align}

Applying H\"{o}lder's inequality to the second term of equality~\eqref{benmouloud2}, we obtain
\begin{align}\label{benmouloud4a}
&\int_{A^+_{n,m,\epsilon}}
\eta\bigg|\frac{\big\langle|\nabla \tilde{u}_n|^{p-2}\nabla \tilde{u}_n - |\nabla \tilde{u}_\infty|^{p-2}\nabla \tilde{u}_\infty, 
\nabla(\tilde{u}_n-\tilde{u}_\infty)\big\rangle}{|y|^{ap}}\bigg|^\frac{1}{p} \, dz\nonumber\\
& \qquad \leqslant C \left|\left\{z \in B_{R_{j+1}} \big| |\tilde{u}_n(z) - \tilde{u}_\infty(z)| \geqslant \epsilon \right\}\right|^\frac{p-1}{p}.
\end{align}
Moreover, we have
$\chi_{\{z \in B_{R_{j+1}} \colon |\tilde{u}_n(z) - \tilde{u}_\infty(z)| \geqslant \epsilon\}} \leqslant \chi_{B_{R_{j+1}}}$
and, 
$\chi_{\{z \in B_{R_{j+1}} \colon |\tilde{u}_n(z) - \tilde{u}_\infty(z)| \geqslant \epsilon\}} \to 0$ 
a.\@ e.\@ $\mathbb{R}^N$ 
as $n \to \infty$.
Since $B_{R_{j+1}}(0) \subset \mathbb{R}\sp{N}$ is bounded for every \( n \in \mathbb{N} \), by the 
Lebesgue dominated convergence theorem it follows that
$
\left|\left\{z \in B_{R_{j+1}} \colon |\tilde{u}_n(z) - \tilde{u}_\infty(z)| \geqslant \epsilon \right\}\right| \to 0
$
as $n\to \infty$. Hence, there exists
$n(\epsilon) \in \mathbb{N}$ independent from $m$, 
such that 
$
\left|\left\{z \in B_{R_{j+1}} \colon |\tilde{u}_n(z) - \tilde{u}_\infty(z)| \geqslant \epsilon \right\}\right| < \epsilon
$ 
for every \( n \in \mathbb{N} \) such that 
$n > n(\epsilon)$.
Inequality~\eqref{benmouloud4a} together with previous one 
imply that the second term on the right-hand side of equality~\eqref{benmouloud2} 
is such that 
\begin{align}\label{benmouloud5}
\int_{\mathbb{R}^N} \chi_{A^+_{n,m,\epsilon}}
\eta\bigg|\frac{\big\langle|\nabla \tilde{u}_n|^{p-2}\nabla \tilde{u}_n - |\nabla \tilde{u}_\infty|^{p-2}\nabla \tilde{u}_\infty, 
\nabla(\tilde{u}_n-\tilde{u}_\infty)\big\rangle}{|y|^{ap}}\bigg|^\frac{1}{p} \, dz
\leqslant C\epsilon^\frac{p-1}{p}
\end{align}
for every \( n \in \mathbb{N} \) such that 
$n \geqslant n(\epsilon)$.

Finally, the third term on the right-hand side of the
equality~\eqref{benmouloud2} can be estimated using H\"{o}lder's inequality. Indeed, 
\begin{align}\label{benmouloud6}
\int_{A^-_{n,m,\epsilon}}
&\eta\bigg|\frac{\big\langle|\nabla \tilde{u}_n|^{p-2}\nabla \tilde{u}_n - |\nabla \tilde{u}_\infty|^{p-2}\nabla \tilde{u}_\infty, 
\nabla(\tilde{u}_n-\tilde{u}_\infty)\big\rangle}{|y|^{ap}}\bigg|^\frac{1}{p} \, dz\nonumber\\
& \qquad = \int_{B_{R_{j+1}}} \chi_{A^-_{n,m,\epsilon}} \eta^\frac{p-1}{p}
\bigg|\frac{\eta\big\langle|\nabla \tilde{u}_n|^{p-2}\nabla \tilde{u}_n - |\nabla \tilde{u}_\infty|^{p-2}\nabla \tilde{u}_\infty, 
\nabla(\tilde{u}_n-\tilde{u}_\infty)\big\rangle}{|y|^{ap}}\bigg|^\frac{1}{p} \, dz\nonumber\\
& \qquad \leqslant \Bigg(\int_{B_{R_{j+1}}} 
\chi_{A^-_{n,m,\epsilon}} \eta \, dz \Bigg)^\frac{p-1}{p}\nonumber\\
& \qquad \qquad \times \Bigg(\int_{B_{R_{j+1}}} 
\chi_{A^-_{n,m,\epsilon}}
\frac{\eta\big\langle|\nabla \tilde{u}_n|^{p-2}\nabla \tilde{u}_n
 - |\nabla \tilde{u}_\infty|^{p-2}\nabla \tilde{u}_\infty,
\nabla(\tilde{u}_n-\tilde{u}_\infty)\big\rangle}{|y|^{ap}} \, dz \Bigg)^\frac{1}{p} \nonumber \\
& \qquad \leqslant C 
\left\{ \splitfrac{\displaystyle\int_{B_{R_{j+1}}} 
\chi_{A^-_{n,m,\epsilon}}
\frac{\eta\big\langle|\nabla \tilde{u}_n|^{p-2}\nabla \tilde{u}_n, 
\nabla(\tilde{u}_n-\tilde{u}_\infty)\big\rangle}{|y|^{ap}} \, dz }
{ - \displaystyle \int_{B_{R_{j+1}}} \chi_{A^-_{n,m,\epsilon}}
\frac{\eta\big\langle|\nabla \tilde{u}_\infty|^{p-2}\nabla \tilde{u}_\infty, 
\nabla(\tilde{u}_n-\tilde{u}_\infty)\big\rangle}{|y|^{ap}} \, dz } \right\}^\frac{1}{p}.
\end{align}

Now we define the function 
$\psi_\epsilon \colon \mathbb{R} \to \mathbb{R}$ by
\begin{align*}
\psi_\epsilon(\sigma) & \equiv
\begin{cases}
\sigma, &\textrm{if $|\sigma|\leq\epsilon$;}\\
\epsilon \operatorname{sign}(\sigma), &\textrm{if $ |\sigma| > \epsilon$.}
\end{cases}
\end{align*}
To estimate the integral on the right-hand side of inequality~\eqref{benmouloud6} we note that the set
$A^-_{n,m,\epsilon}$ can be written as
\begin{align*}
A^-_{n,m,\epsilon} 
& = \left\{z \in B_{R_{j+1}} \colon |\tilde{u}_\infty(z)| \leqslant m \textrm{ and } 
|\tilde{u}_n(z)- \psi_m(\tilde{u}_\infty(z))| < \epsilon  \right\}.
\end{align*}
In this way, we obtain
\begin{align}
\label{benmouloud3a1}
&\int_{B_{R_{j+1}}} \chi_{A^-_{n,m,\epsilon}}
\frac{\eta\big\langle|\nabla \tilde{u}_n|^{p-2}\nabla \tilde{u}_n, 
\nabla(\tilde{u}_n-\tilde{u}_\infty)\big\rangle}{|y|^{ap}} \, dz \nonumber \\
& \qquad = \int_{B_{R_{j+1}}} \chi_{A^-_{n,m,\epsilon}}
\frac{\eta|\nabla \tilde{u}_n|^{p}}{|y|^{ap}} \, dz
 - \int_{B_{R_{j+1}}} \chi_{A^-_{n,m,\epsilon}}
\frac{\eta\big\langle|\nabla \tilde{u}_n|^{p-2}\nabla \tilde{u}_n,\nabla \tilde{u}_\infty\big\rangle}{|y|^{ap}} \, dz \nonumber \\
& \qquad \leqslant \int_{B_{R_{j+1}}} \chi_{\left\{z \in B_{R_{j+1}} \colon |\tilde{u}_n(z)- \psi_m(\tilde{u}_\infty(z))| < \epsilon  \right\}}
\frac{\eta|\nabla \tilde{u}_n|^{p}}{|y|^{ap}} \, dz \nonumber \\
& \qquad \qquad 
- \int_{B_{R_{j+1}}} 
\chi_{\left\{z \in B_{R_{j+1}} \colon |\tilde{u}_n(z)- \psi_m(\tilde{u}_\infty(z))| < \epsilon  \right\}}
\frac{\eta\big\langle|\nabla \tilde{u}_n|^{p-2}\nabla \tilde{u}_n,\nabla \tilde{u}_\infty\big\rangle}{|y|^{ap}} \, dz \nonumber \\
& \qquad = \int_{B_{R_{j+1}}} 
\frac{\eta\big\langle|\nabla \tilde{u}_n|^{p-2}\nabla \tilde{u}_n, 
\nabla(\psi_\epsilon(\tilde{u}_n-\psi_m(\tilde{u}_\infty)))\big\rangle}{|y|^{ap}} \, dz \nonumber \\
& \qquad \leqslant \int_{B_{R_{j+1}}} 
\frac{\big\langle|\nabla \tilde{u}_n|^{p-2}\nabla \tilde{u}_n, 
\nabla(\eta\psi_\epsilon(\tilde{u}_n-\psi_m(\tilde{u}_\infty)))\big\rangle}{|y|^{ap}} \, dz \nonumber \\
& \qquad \qquad + \int_{B_{R_{j+1}}} 
\frac{\big\langle|\nabla \tilde{u}_n|^{p-2}\nabla \tilde{u}_n, 
\psi_\epsilon(\tilde{u}_n-\psi_m(\tilde{u}_\infty))\nabla\eta\big\rangle}{|y|^{ap}} \, dz \nonumber \\
& \qquad \leqslant \int_{B_{R_{j+1}}} 
\frac{\big\langle|\nabla \tilde{u}_n|^{p-2}\nabla \tilde{u}_n, 
\nabla(\eta\psi_\epsilon(\tilde{u}_n-\psi_m(\tilde{u}_\infty)))\big\rangle}{|y|^{ap}} \, dz
+ C\epsilon.
\end{align}

Now we use the function
$\eta\psi_\epsilon(\tilde{u}_n-\psi_m(\tilde{u}_\infty)) \in \mathcal{D}_a^{1,p}(\mathbb{R}^N\backslash\{|y|=0\})$ as a test function in the definition of weak solution to the problem~\eqref{problemapucciservadei} to estimate the integral in~\eqref{benmouloud3a1}. Thus, we obtain
\begin{align}
\label{benmouloud3a2}
{} &\int_{\mathbb{R}^N} \frac{\big\langle|\nabla \tilde{u}_n|^{p-2}\nabla \tilde{u}_n, 
\nabla (\eta \psi_\epsilon (\tilde{u}_n - \psi_m (\tilde{u}_\infty)))\big\rangle}{|y|^{ap}} \, dz \nonumber \\
& \qquad = \left\langle \varphi'(\tilde{u}_n), \eta \psi_\epsilon (\tilde{u}_n - \psi_m (\tilde{u}_\infty))\right\rangle
+ \mu \int_{\mathbb{R}^N} \frac{|\tilde{u}_n|^{p-2} \tilde{u}_n 
(\eta \psi_\epsilon (\tilde{u}_n - \psi_m (\tilde{u}_\infty)))}{|y|^{p(a+1)}} \, dz \nonumber \\
& \qquad \qquad  + \int_{\mathbb{R}^N} \frac{(\tilde{u}_n)_+^{p^*(a,b)-2} (\tilde{u}_n)_+
(\eta \psi_\epsilon (\tilde{u}_n - \psi_m (\tilde{u}_\infty)))}{|y|^{bp^*(a,b)}} \, dz \nonumber \\
& \qquad \qquad + \int_{\mathbb{R}^N} \frac{(\tilde{u}_n)_+^{p^*(a,c)-2} (\tilde{u}_n)_+
(\eta \psi_\epsilon (\tilde{u}_n - \psi_m (\tilde{u}_\infty)))}{|y|^{cp^*(a,c)}} \, dz.
\end{align}
Since the sequences 
$(\|\tilde{u}_n\|_{L_{a+1}^{p}})_{n\in \mathbb{N}} \subset \mathbb{R}$, $(\|\tilde{u}_n\|_{L_b^{p^*(a,b)}})_{n\in \mathbb{N}} \subset \mathbb{R}$ 
and $(\|\tilde{u}_n\|_{L_c^{p^*(a,c)}})_{n\in \mathbb{N}} \subset \mathbb{R}$ 
are bounded, regardless of $n \in \mathbb{N}$, 
it follows from inequality~\eqref{benmouloud3a1} and equality~\eqref{benmouloud3a2} that
\begin{align}\label{benmouloud8}
\int_{B_{R_{j+1}}} \chi_{A^-_{n,m,\epsilon}}
\frac{\eta\big\langle|\nabla \tilde{u}_n|^{p-2}\nabla \tilde{u}_n, 
\nabla(\tilde{u}_n-\tilde{u}_\infty)\big\rangle}{|y|^{ap}} \, dz
& \leqslant \left\langle \varphi'(\tilde{u}_n), \eta \psi_\epsilon (\tilde{u}_n - \psi_m (\tilde{u}_\infty))\right\rangle
+ C\epsilon.
\end{align}

Regarding the second integral on the right-hand side of inequality~\eqref{benmouloud6}, we note that it can be written in the form
\begin{align}\label{benmouloud7}
&\int_{B_{R_{j+1}}} \chi_{A^-_{n,m,\epsilon}}
\frac{\eta\big\langle|\nabla \tilde{u}_\infty|^{p-2}\nabla \tilde{u}_\infty, 
\nabla(\tilde{u}_n-\tilde{u}_\infty)\big\rangle}{|y|^{ap}} \, dz\nonumber\\
& \qquad = \int_{B_{R_{j+1}}} \chi_{\{z \in \mathbb{R}\sp{N} \colon |\tilde{u}_\infty(z)| \leqslant m\}}
\frac{\eta\big\langle|\nabla \tilde{u}_\infty|^{p-2}\nabla \tilde{u}_\infty, 
\nabla(\psi_\epsilon(\tilde{u}_n-\tilde{u}_\infty))\big\rangle}{|y|^{ap}} \, dz.
\end{align}

Now we combine inequalities~\eqref{benmouloud4},~\eqref{benmouloud5},~\eqref{benmouloud8}, 
and~\eqref{benmouloud7} with equality~\eqref{benmouloud2} 
and we deduce that, for every $n > n(\epsilon)$ 
and for every $m \in \mathbb{N}^*$, 
it is valid the inequality
\begin{align}
\label{benmouloud8a}
{} & \int_{\mathbb{R}^N}
\frac{\eta\big\langle|\nabla \tilde{u}_n|^{p-2}\nabla \tilde{u}_n - |\nabla \tilde{u}_\infty|^{p-2}\nabla \tilde{u}_\infty, 
\nabla(\tilde{u}_n-\tilde{u}_\infty)\big\rangle}{|y|^{ap}} \, dz \nonumber \\
& \qquad \leqslant \frac{C}{m^{p-1}} + C\epsilon^\frac{p-1}{p} 
+C\left\{
\splitfrac{-\displaystyle\int_{B_{R_{j+1}}} 
\chi_{\{|\tilde{u}_\infty| \leqslant m\}}
\frac{\eta\big\langle|\nabla \tilde{u}_\infty|^{p-2}\nabla \tilde{u}_\infty, 
\nabla(\psi_\epsilon(\tilde{u}_n-\tilde{u}_\infty))\big\rangle}{|y|^{ap}} \, dz }
{ + \displaystyle\left\langle \varphi'(\tilde{u}_n), \eta \psi_\epsilon (\tilde{u}_n - \psi_m (\tilde{u}_\infty))\right\rangle
+ C\epsilon } \right\}^\frac{1}{p}.
\end{align}

For $n \geqslant n(\epsilon)$ fixed, the sequence
$(\eta \psi_\epsilon (\tilde{u}_n 
- \psi_m (\tilde{u}_\infty)))\sb{n \in \mathbb{N}} \subset 
\mathcal{D}_a^{1,p}(\mathbb{R}^N\backslash\{|y|=0\})$ is bounded.
By the reflexivity of the Sobolev space
$\mathcal{D}_a^{1,p}(\mathbb{R}^N\backslash\{|y|=0\})$, up to passage to a subsequence always denoted in the same way, there exists a function 
\( \eta \psi_\epsilon (\tilde{u}_n - \tilde{u}_\infty) \in 
\mathcal{D}_a^{1,p}(\mathbb{R}^N\backslash\{|y|=0\}) \)
such that
$\eta \psi_\epsilon (\tilde{u}_n - \psi_m (\tilde{u}_\infty)) \rightharpoonup \eta \psi_\epsilon (\tilde{u}_n - \tilde{u}_\infty)$ weakly in
$\mathcal{D}_a^{1,p}(\mathbb{R}^N\backslash\{|y|=0\})$ as $m \to \infty$. 
Passing to the limit as 
$m \to \infty$ in inequality~\eqref{benmouloud8a} it follows that
\begin{align}\label{benmouloud8b}
&\int_{\mathbb{R}^N}
\frac{\eta\big\langle|\nabla \tilde{u}_n|^{p-2}\nabla \tilde{u}_n - |\nabla \tilde{u}_\infty|^{p-2}\nabla \tilde{u}_\infty, 
\nabla(\tilde{u}_n-\tilde{u}_\infty)\big\rangle}{|y|^{ap}} \, dz\nonumber\\
& \qquad \leqslant C\epsilon^\frac{p-1}{p}
+C\left\{ \splitfrac{- \displaystyle\int_{\mathbb{R}^N} 
\frac{\eta\big\langle|\nabla \tilde{u}_\infty|^{p-2}\nabla \tilde{u}_\infty, 
\nabla(\psi_\epsilon(\tilde{u}_n-\tilde{u}_\infty))\big\rangle}{|y|^{ap}} \, dz }
{ + \left\langle \varphi'(\tilde{u}_n), \eta \psi_\epsilon (\tilde{u}_n - \tilde{u}_\infty)\right\rangle
+ C\epsilon } \right\}^\frac{1}{p}.
\end{align}

On the other hand, by the definition of weak solution and by the fact that the space 
$C_0^\infty(B_{R_{j+1}})$ is dense in $L^{p'}_{a+1}(B_{R_{j+1}})$, we deduce that 
\begin{align}
\nabla \psi_\epsilon (\tilde{u}_n - \tilde{u}_\infty) \rightharpoonup 0 \quad  \textrm{weakly in } L^{p}_{a+1}(B_{R_{j+1}})
\end{align}
as $n \to +\infty$. Since the sequence
$( \psi_\epsilon (\tilde{u}_n - \tilde{u}_\infty) )\sb{n \in \mathbb{N}} \subset 
\mathcal{D}_a^{1,p}(\mathbb{R}^N\backslash\{|y|=0\})
$ is also bounded,
up to the passage to a subsequence we have
\begin{align*}
\nabla \psi_\epsilon (\tilde{u}_n - \tilde{u}_\infty) \rightharpoonup 0 \quad \textrm{weakly in } \mathcal{D}_a^{1,p}(\mathbb{R}^N\backslash\{|y|=0\})
\end{align*}
as $n \to \infty$. 
Passing to the limit superior as $n \to \infty$ 
in inequality~\eqref{benmouloud8b}, it follows that
\begin{align*}
\limsup_{n \to \infty} 
\int_{\mathbb{R}^N}
\bigg|\frac{\eta\big\langle|\nabla \tilde{u}_n|^{p-2}\nabla \tilde{u}_n - |\nabla \tilde{u}_\infty|^{p-2}\nabla \tilde{u}_\infty, 
\nabla(\tilde{u}_n-\tilde{u}_\infty)\big\rangle}{|y|^{ap}}\bigg|^\frac{1}{p} \, dz
\leqslant C \big(\epsilon^\frac{p-1}{p} + \epsilon^\frac{1}{p}\big).
\end{align*}
And since $0<\epsilon <1$ is arbitrary, using the Cantor's diagonal argument we conclude that
$$
\limsup_{n \to \infty} 
\int_{\mathbb{R}^N}
\bigg|\frac{\eta\big\langle|\nabla \tilde{u}_n|^{p-2}\nabla \tilde{u}_n - |\nabla \tilde{u}_\infty|^{p-2}\nabla \tilde{u}_\infty, 
\nabla(\tilde{u}_n-\tilde{u}_\infty)\big\rangle}{|y|^{ap}}\bigg|^\frac{1}{p} \, dz
= 0.
$$
The lemma is proved.
\end{proof}

\begin{lema}\label{benmouloudlema3}
Suppose that the hypotheses of
Proposition~\ref{fprclaim4.3} are valid. 
Then, 
as $n \to \infty$ we have
\(\nabla \tilde{u}_n \to \nabla \tilde{u}_\infty \) 
a.\@ e.\@ in $\mathbb{R}^N$.
\end{lema}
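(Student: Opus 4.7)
The plan is to deduce the a.e.\ convergence of the gradients from Lemma~\ref{benmouloudlema2} by combining the strict monotonicity of the vector field $X \mapsto |X|^{p-2}X$ with a standard subsequence-plus-diagonal argument. Cover $\mathbb{R}^N \setminus \{|y|=0\}$ by an exhaustion $(\Omega_j)_{j\in\mathbb{N}}$ of compact subsets avoiding $\{|y|=0\}$, and for each $j$ choose a cut off function $\eta_j \in C_c^\infty(\mathbb{R}^N \setminus \{|y|=0\})$ with $0\leqslant \eta_j \leqslant 1$ and $\eta_j \equiv 1$ on $\Omega_j$. Set
\[
I_n(z) \equiv \big\langle |\nabla \tilde{u}_n|^{p-2}\nabla \tilde{u}_n - |\nabla \tilde{u}_\infty|^{p-2}\nabla \tilde{u}_\infty, \nabla(\tilde{u}_n - \tilde{u}_\infty)\big\rangle \geqslant 0,
\]
where the non-negativity is the pointwise inequality~\eqref{4.8xuanwang2010}.

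By Lemma~\ref{benmouloudlema2}, up to a subsequence $\int_{\mathbb{R}^N}\eta_j (I_n/|y|^{ap})^{1/p}\,dz \to 0$. Since the integrand is non-negative, this is convergence to zero in $L^1(\mathbb{R}^N)$, hence, passing to a further subsequence, $\eta_j (I_n/|y|^{ap})^{1/p} \to 0$ almost everywhere. Because $|y|^{ap}$ is finite and positive on $\Omega_j$, we deduce $I_n(z)\to 0$ for a.e.\ $z \in \Omega_j$. I then invoke the classical pointwise inequalities for the $p$-Laplacian vector field: when $p\geqslant 2$,
\[
I_n(z) \geqslant c_p\,|\nabla(\tilde{u}_n - \tilde{u}_\infty)(z)|^p,
\]
which immediately gives $\nabla \tilde{u}_n(z) \to \nabla \tilde{u}_\infty(z)$ a.e.\ on $\Omega_j$; when $1<p<2$,
\[
I_n(z) \geqslant (p-1)\,\frac{|\nabla(\tilde{u}_n - \tilde{u}_\infty)(z)|^2}{\bigl(|\nabla \tilde{u}_n(z)| + |\nabla \tilde{u}_\infty(z)|\bigr)^{2-p}},
\]
so one first shows that $|\nabla \tilde{u}_n(z)|$ cannot blow up at points where $I_n(z)\to 0$ (otherwise $I_n(z)$ itself would diverge), extracts a pointwise bounded subsequence, and uses strict monotonicity of $X\mapsto |X|^{p-2}X$ to conclude that the only possible limit of $\nabla \tilde{u}_n(z)$ is $\nabla \tilde{u}_\infty(z)$. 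A standard subsequence principle then yields $\nabla \tilde{u}_n(z) \to \nabla \tilde{u}_\infty(z)$ for a.e.\ $z\in \Omega_j$.

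Finally, applying Cantor's diagonal procedure over the exhaustion $(\Omega_j)_{j\in\mathbb{N}}$ produces a single subsequence along which $\nabla \tilde{u}_n \to \nabla \tilde{u}_\infty$ almost everywhere in $\mathbb{R}^N \setminus \{|y|=0\}$, and since the set $\{|y|=0\}$ has Lebesgue measure zero the claim follows.

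The main obstacle is the subquadratic case $1<p<2$: there the monotonicity inequality is degenerate when $|\nabla \tilde{u}_n(z)|$ is large, so one cannot pass from $I_n(z)\to 0$ to $|\nabla(\tilde u_n-\tilde u_\infty)(z)|\to 0$ in one step. Handling it requires the auxiliary step of ruling out blow-up of $|\nabla \tilde{u}_n(z)|$ at a.e.\ point $z$ and then invoking continuity and strict monotonicity of $X\mapsto |X|^{p-2}X$ to identify the pointwise limit.
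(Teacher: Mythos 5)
Your argument is correct, and its skeleton coincides with the paper's: extract from Lemma~\ref{benmouloudlema2} that the nonnegative monotonicity expression tends to zero on each member of an exhaustion, convert that into pointwise information via the coercivity of $X\mapsto|X|^{p-2}X$, and glue with a Cantor diagonal over the exhaustion. The one genuine difference is in how the monotonicity is exploited, and here your version is actually more complete than the paper's. The paper invokes only the inequality $\langle|X|^{s-2}X-|Y|^{s-2}Y,\,X-Y\rangle\geqslant C|X-Y|^{s}$, which it states for $s\geqslant 2$ (and which is in fact false for $1<s<2$: take $Y=Me_1$, $X=(M+1)e_1$ and let $M\to\infty$); it then passes from the $L^{1}$ smallness of $|\nabla\tilde u_n-\nabla\tilde u_\infty|$ to a.e.\ convergence of a subsequence. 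Thus the paper's proof, as written, only covers $p\geqslant 2$, whereas the hypotheses of the theorem allow any $1<p<N$. Your treatment of the subquadratic range --- using the degenerate bound $I_n\geqslant(p-1)|\nabla(\tilde u_n-\tilde u_\infty)|^{2}(|\nabla\tilde u_n|+|\nabla\tilde u_\infty|)^{p-2}$, ruling out pointwise blow-up of $|\nabla\tilde u_n|$ (since blow-up would force $I_n\gtrsim|\nabla\tilde u_n|^{p}\to\infty$), and then identifying the limit by strict monotonicity together with the subsequence principle --- closes exactly the gap the paper leaves open. Your other deviations (working pointwise with $I_n\to 0$ a.e.\ rather than first establishing strong $L^{1}$ convergence of the gradient difference, and choosing the exhaustion to avoid $\{|y|=0\}$ so that the weight $|y|^{-ap}$ is bounded above and below) are harmless cosmetic variants; note that for the pointwise step one only needs $|y|\neq 0$ at the point in question, which already holds a.e., so avoiding the singular set is not strictly necessary. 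One small point of hygiene: the a.e.\ convergences are obtained only along subsequences at each stage, so the final conclusion holds for a subsequence of $(\tilde u_n)$, which is all that is needed (and all that the paper itself claims).
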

\begin{proof}
As in the proof of Lemma~\ref{benmouloudlema2},
let us cover the space $\mathbb{R}^N$ 
by a sequence of balls
$(B_{R_j})_{j\in \mathbb{N}} \subset \mathbb{R}^N$ centered at the origin, where the sequence of radii $(R_j)_{j\in \mathbb{N}} \subset \mathbb{R}$ is crescent. Let us also fix 
$j\in \mathbb{N}$ and consider the cut off function
$\eta \in C_0^\infty (B_{R_{j+1}})$ 
such that $0 \leqslant \eta \leqslant 1$ 
and also $\eta (z) = 1$ for every $z \in B_{R_{j}}$.

We recall that 
there exists a constant $C > 0$ such that 
$
\langle |X|^{s-2}X - |Y|^{s-2}Y, X-Y \rangle 
\geqslant C |X-Y|^s
$ 
for every $X,Y \in \mathbb{R}^N$ with $s \geqslant 2$.

Using this inequality and Lemma~\ref{benmouloudlema2} it follows that
\begin{align*}
\limsup_{n \to \infty} 
\int_{B_{R_j}}
\frac{|\nabla \tilde{u}_n - \nabla \tilde{u}_\infty|^{p}}{|y|^{ap}} \, dz
= 0,
\end{align*}
that is, 
\(|\nabla \tilde{u}_n|^p 
\to  |\nabla \tilde{u}_\infty|^{p} \)
strongly in \( L^1(B_{R_j}) \)
as $n \to \infty$
and up to a subsequence, still denoted in the same way, we deduce that
\(
\nabla \tilde{u}_n \to  \nabla \tilde{u}_\infty
\textrm{a.\@ e.\@ in } B_{R_j}
\) as $n \to \infty$.
Since $j \in \mathbb{N}$ is arbitrary, using Cantor's diagonal argument we conclude that
\(
\nabla \tilde{u}_n \to  \nabla \tilde{u}_\infty 
 \textrm{ a.\@ e.\@ in } \mathbb{R}^N
\) as $n \to \infty$.
The lemma is proved.
\end{proof}

\begin{proof}[Proof of Theorem~\ref{teo:existencia}]
The conclusion follows directly from Propositions~\ref{sequenciaps} 
and~\ref{fprclaim4.3}.
\end{proof}

\bigskip

In the proof of Theorem~\ref{teo:existencia} we used, among other ideas, the existence results by 
Bhakta~\cite[Theorems~1 and~2]{MR2934676},
which do not apply in the case $a = b$ and $\mu < 0$; 
however, problem~\eqref{problema} 
still has solution in this situation. 
As can be expected, to prove this existence result we have to use ideas different from the ones already used. We adapt some arguments by 
Filippucci, Pucci and Robert~\cite{MR2498753}
by making a convenient translation in the variable of the cylindrical singularity and, through an analysis of the asymptotic behavior of this function, we obtain the existence result that can be stated as follows.

\begin{teorema} 
\label{teo:existenciamunegativo}
Let
$1 \leqslant k \leqslant N$, $1 < p < N$, 
$0 = a = b < c < 1$, and
$\mu < 0$.
Then there exists
$u\in \mathcal{D}_a^{1,p}
(\mathbb{R}^N\backslash\{|y|=0\})$ 
%\cap 
%L_{\mathrm{loc}}^\infty
%(\mathbb{R}^N\backslash \{|y|=0\})$ 
such that $u>0$ in $\mathbb{R}^N\backslash \{|y|=0\}$ 
and $u$ is a weak solution to problem~\eqref{problema}.
\end{teorema}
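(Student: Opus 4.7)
The plan is to mirror the proof of Theorem~\ref{teo:existencia}: verify the mountain pass geometry of the functional $\varphi$, produce a Palais-Smale sequence at a level $d_u$ with $0<d_u<d_*$, and conclude via the compactness analysis of Sections~\ref{sec:convergenciafracazero} and~\ref{conclusaoteoexistencia}. Lemma~\ref{claim2.1} and Propositions~\ref{convergenciapara0} and~\ref{fprclaim4.3} make no use of the sign of $\mu$; for $\mu<0$ the term $-\mu\int|u|^p/|y|^p\,dz$ merely reinforces the norm, so those results continue to apply verbatim. The only step that breaks is Lemma~\ref{claim2.2}, which invoked Bhakta's extremal for $1/K(N,p,\mu,0,0)$; with $a=b=0$ and $\mu<0$ that extremal does not exist, because by pushing test functions along the $y$-direction the Hardy term $-\mu\int|u|^p/|y|^p\,dz$ can be made arbitrarily small, whence $1/K(N,p,\mu,0,0)=1/K(N,p,0,0,0)\equiv S$ (the Aubin--Talenti constant) and the infimum is not attained.

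Since Bhakta's extremal is unavailable in this regime, I would build a test function by hand, following the approach of Filippucci, Pucci and Robert. Let $U$ be the standard Aubin--Talenti extremal for the Sobolev constant $S$ on $\mathbb{R}^N$, let $U_\epsilon(z)\equiv\epsilon^{-(N-p)/p}U(z/\epsilon)$ be the concentrating family, and fix $y_0\in\mathbb{R}^k$ with $|y_0|=1$. Translating in the singular variable, set $V_\epsilon(z)\equiv U_\epsilon(z-(0,y_0))$; a smooth cutoff $\eta$ supported in $B_{1/2}(0,y_0)$ with $\eta\equiv 1$ on $B_{1/4}(0,y_0)$ produces $u_\epsilon\equiv\eta V_\epsilon\in\mathcal{D}_0^{1,p}(\mathbb{R}^N\backslash\{|y|=0\})$, a compactly supported bubble whose support stays a positive distance from the axis $\{|y|=0\}$.

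I would then derive the asymptotic expansion of $\sup_{t\geqslant 0}\varphi(tu_\epsilon)$ as $\epsilon\to 0$. Writing
\begin{align*}
A(\epsilon)\equiv -\mu\int_{\mathbb{R}^N}\frac{|u_\epsilon|^p}{|y|^p}\,dz\qquad\text{and}\qquad B(\epsilon)\equiv\int_{\mathbb{R}^N}\frac{u_\epsilon^{p^*(0,c)}}{|y|^{cp^*(0,c)}}\,dz,
\end{align*}
standard bubble computations show that $B(\epsilon)$ is of order $\epsilon^{cp^*(0,c)}$ while $A(\epsilon)=O(\epsilon^p)$ (with possible logarithmic factors in borderline dimensions). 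A short calculation from $p^*(0,c)=Np/[N-p(1-c)]$ reveals that $cp^*(0,c)<p$ whenever $c<1$, so $B(\epsilon)\gg A(\epsilon)$ as $\epsilon\to 0$. Perturbing the maximization argument of Lemma~\ref{claim2.2} yields
\begin{align*}
\sup_{t\geqslant 0}\varphi(tu_\epsilon)=\left(\frac{1}{p}-\frac{1}{p^*}\right)S^{N/p}+O(A(\epsilon))-c_0\,B(\epsilon)+o(B(\epsilon))
\end{align*}
for some $c_0>0$; the negative contribution $-c_0 B(\epsilon)$ therefore dominates for small $\epsilon$, giving $d_u<(1/p-1/p^*)S^{N/p}$. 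In the complementary case where $d_*$ is realized by the $c$-index rather than the $b=0$ index, Bhakta's extremal for $1/K(N,p,\mu,0,c)$ remains available (since then $a=0<c$) and the reasoning of Lemma~\ref{claim2.3} applies unchanged. In either case one obtains a Palais-Smale sequence at a level $d\in(0,d_*)$, and Proposition~\ref{fprclaim4.3} produces a positive weak solution to problem~\eqref{problema}.

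The main obstacle I anticipate is making the bubble asymptotics rigorous, particularly in dimensions where the global $L^p$-integral of the Aubin--Talenti bubble diverges and logarithmic corrections enter $A(\epsilon)$, as well as verifying that the maximizer $t_\epsilon$ of $\varphi(tu_\epsilon)$ remains close to the maximizer $t_*$ of the truncated functional so that the perturbative expansion displayed above is valid. A careful bookkeeping of these corrections, together with the exponent comparison $cp^*(0,c)<p$, should nevertheless confirm that $B(\epsilon)$ dominates $A(\epsilon)$ and hence establish the strict inequality $d_u<d_*$.
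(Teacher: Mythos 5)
Your route is genuinely different from the paper's. The paper does \emph{not} concentrate bubbles: it takes the Aubin--Talenti extremal $v$ for $1/K(N,p,0,0,0)$ at \emph{fixed profile} and translates it in the singular variable, $v_\alpha(z)=v(x,y-\alpha e_2)$, sends $\alpha\to+\infty$ so that the Hardy term $-\mu\int|v_\alpha|^p/|y|^p\,dz$ disappears in the limit, and then deduces $\max_{t}\varphi(tv_\alpha)<\bigl(\tfrac1p-\tfrac1{p^*}\bigr)K(N,p,\mu,0,0)^{-N/p}$ from the asserted strict inequality $1/K(N,p,0,0,0)<1/K(N,p,\mu,0,0)$; it never runs the competition between the Hardy correction and the subcritical-weight nonlinearity. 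Your diagnosis that for $\mu<0$ the infimum $1/K(N,p,\mu,0,0)$ equals the Aubin--Talenti constant $S$ and is not attained is correct, and it is exactly why Lemma~\ref{claim2.2} breaks; note, though, that this same observation is in tension with the strict inequality between the two constants that the paper's Claim~\ref{afirmativaparacasomunegativo} relies on, so your instinct that the negative term $-c_0B$ must do the work is sound.

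The genuine gap is in your asymptotics for $A(\epsilon)$. For a bubble concentrating at scale $\epsilon$ at a point off the axis, $\int|u_\epsilon|^p\,dz=\epsilon^{p}\int_{B_{1/(2\epsilon)}}|U|^p\,dw$, and since $U(w)\sim|w|^{-(N-p)/(p-1)}$ at infinity, the inner integral converges only when $N>p^2$. For $N<p^2$ the correction is not logarithmic but polynomial: $A(\epsilon)\sim\epsilon^{(N-p)/(p-1)}$, a strictly smaller exponent than $p$. The comparison you need then becomes $cp^*(0,c)<(N-p)/(p-1)$, which fails for $c$ close to $1$; e.g.\ for $N=3$, $p=2$ it requires $6c/(1+2c)<1$, i.e.\ $c<1/4$. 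Worse, in that regime $\int U^{p^*(0,c)}$ also diverges, and the tail contributions force $B(\epsilon)\sim\epsilon^{q(N-p)/(p(p-1))}$ with $q=p^*(0,c)>p$, which is then $o(A(\epsilon))$ --- the Hardy penalty dominates and the strict level estimate is lost. So the concentration construction proves the theorem only for $N>p^2$ (or small $c$), not for the full stated range $1<p<N$, $0<c<1$. To cover low dimensions you must suppress the rescaling altogether, as the paper does, by translating a fixed-profile extremal to infinity (or, equivalently, by using an $\alpha$-dependent intermediate scale), so that no $\epsilon$-power corrections from the bubble's slow decay enter the expansion.
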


\begin{proof}
Le $v \in \mathcal{D}_a^{1,p}(\mathbb{R}^N\backslash\{|y|=0\})$ 
be a nonnegative function that assumes the 
optimal constant $1/K(N,p,0,0,0)$.
Let $e_2 \in \mathbb{R}^k$ 
be a unitary vector 
and set 
$v_\alpha(z) \equiv v(x,y-\alpha e_2)$. 
Moreover, let the functional $\varphi$ be defined as in~\eqref{eq:funcenergia}.

\begin{afirmativa}\label{afirmativaparacasomunegativo} It is valid the inequality
\begin{align*}
\max_{t\geqslant 0} \varphi(tv_\alpha) < \left(\frac{1}{p}-\frac{1}{p^*(0,0)}\right)K(N,p,\mu,0,0)^{-\frac{p^*(0,0)}{p^*(0,0)-p}}.
\end{align*}
\end{afirmativa}
\begin{proof} Let the function $f_3 \colon \mathbb{R}^+ \to \mathbb{R}$ be defined by
\begin{align*}
f_3(t)
&\equiv \frac{1}{p}\|v_\alpha\|^p t^p
- \frac{1}{p^*(0,0)}\left(\int_{\mathbb{R}^N}({v_\alpha}_+)^{p^*(0,0)}\, dz\right)t^{p^*(0,0)}.
\end{align*}
Denoting by $t_{\max}$ the point of maximum for $f_3$ and using the invariance of the three integrals involved in the definition of the infimum under the translations, we obtain
\begin{align*}
\lim_{\alpha \to +\infty} f\sb{3}(t\sb{\max}) 
&= \lim_{\alpha \to +\infty}\left(\frac{1}{p}-\frac{1}{p^*(0,0)}\right)\left\{\frac{\displaystyle\int_{\mathbb{R}^N}|\nabla v_\alpha|^p \, dz 
-\mu\int_{\mathbb{R}^N}\frac{|v_\alpha|^p}{|y+\alpha e_2|^{p}} \, dz}
{\left(\displaystyle\int_{\mathbb{R}^N}(v_{\alpha+})^{p^*(0,0)}\, dz\right)^\frac{p}{p^*(0,0)}}\right\}^\frac{p^*(0,0)}{p^*(0,0)-p}\\
&= \left(\frac{1}{p}-\frac{1}{p^*(0,0)}\right)K(N,p,0,0,0)^{-\frac{p^*(0,0)}{p^*(0,0)-p}}.
\end{align*}
And since
$\mu < 0$ implies 
$1/K(N,p,0,0,0) < 1/K(N,p,\mu,0,0)$, 
we deduce that
\begin{align*}
\max_{t\geqslant 0} \varphi (tv_\alpha) 
&\leqslant \sup_{t\geqslant 0} f\sb{3}(t) = f\sb{3}(t\sb{\max})
 < \left(\frac{1}{p}-\frac{1}{p^*(0,0)}\right)K(N,p,\mu,0,0)^{-\frac{p^*(0,0)}{p^*(0,0)-p}},
\end{align*}
for $\alpha \in \mathbb{R}^+$ big enough.
This concludes the proof of the claim.
\end{proof}

Combining the definition~\eqref{eq:du} of $d_u$ 
and Claim~\ref{afirmativaparacasomunegativo}, 
it follows that
\begin{align}\label{dvaplha}
0 < d_{v_\alpha} \leqslant \max_{t\geqslant 0} \varphi(tv_\alpha) < \left(\frac{1}{p}-\frac{1}{p^*(0,0)}\right)K(N,p,\mu,0,0)^{-\frac{p^*(0,0)}{p^*(0,0)-p}},
\end{align}
that is, the conclusion of Lemma~\ref{claim2.2} still holds in the case $a=b=0$ and $\mu < 0$.

Our next goal is to prove that the conclusion of
Lemma~\ref{claim2.3} also holds in this case.
Using the definition~\eqref{intervalod} of $d_*$, we obtain the inequality 
$0 < d_{v} < d_*$.

The conclusions of 
Proposition~\ref{convergenciapara0} 
are still valid in the case
$a=b=0$ and $\mu <0$, with the same proof.
Finally, combining Propositions~\ref{sequenciaps} and~\ref{fprclaim4.3} we obtain the conclusion of the theorem.
\end{proof}

\bigskip

We can unify the proofs of the existence of solution to problem~\eqref{problema} in the cases 
$0 \leqslant a = b < (k-p)/p$ and $\mu < 0$ 
by considering the subspace of
\(
\mathcal{D}_{a}^{1,p}(\mathbb{R}^N\backslash\{|y|=0\})
\)
of the functions that are cylindrically symmetric in the variable of the singularity,
that is,
in the subspace of the functions such that
$u(z) = u(x,y) = u(x,|y|)$ 
for every $y \in \mathbb{R}^k$.
However, in this case the solution obtained is not necessarily a ground state solution, due to some possible breaking of symmetry.
Our result can be stated as follows.

\begin{teorema}\label{teo:existenciamunegativo2}
Let 
$k\geqslant 2$, $1 <p <N$, $0 \leqslant a < \frac{k-p}{p}$, 
$a=b<c<a+1$ and
$\mu < \bar{\mu}$.
Then problem~\eqref{problema}
has a nonnegative solution.
\end{teorema}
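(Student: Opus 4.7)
The plan is to run the variational scheme of Theorem~\ref{teo:existencia} in the closed subspace of cylindrically symmetric functions
\[
X_{\mathrm{cyl}} \equiv \bigl\{ u \in \mathcal{D}_a^{1,p}(\mathbb{R}^N\setminus\{|y|=0\}) : u(x,y)=u(x,|y|)\ \text{for a.e.}~y\in\mathbb{R}^k\bigr\}.
\]
Since the functional $\varphi$ in~\eqref{eq:funcenergia} is invariant under the orthogonal action of $O(k)$ on the $y$-variable, the principle of symmetric criticality of Palais guarantees that every critical point of $\varphi|_{X_{\mathrm{cyl}}}$ is a critical point of $\varphi$ on the whole space, and hence a weak solution of~\eqref{problema}.

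First, I would verify that $\varphi|_{X_{\mathrm{cyl}}}$ inherits the mountain pass geometry of Lemma~\ref{claim2.1} verbatim, since $X_{\mathrm{cyl}}$ is a closed linear subspace on which the Maz'ya inequality~\eqref{eq:maz} still holds. The mountain pass theorem then produces a Palais--Smale sequence in $X_{\mathrm{cyl}}$ at a level $d_{\mathrm{cyl}}>0$.

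The crucial step is to prove $d_{\mathrm{cyl}}<d_*$. Since $a=b$, the relevant threshold in~\eqref{intervalod} is controlled by $K(N,p,\mu,a,a)$. Restricting to $X_{\mathrm{cyl}}$ eliminates the non-compactness coming from translations in $y$, leaving only the dilation group and, when $k<N$, translations in $x\in\mathbb{R}^{N-k}$. Exploiting this gain, I would build a cylindrically symmetric test function $v\in X_{\mathrm{cyl}}$ by adapting the translation argument of Theorem~\ref{teo:existenciamunegativo} to the $x$-direction (or, when $k=N$, by a direct concentration-compactness argument in the symmetric class) so that the quotient defining $1/K(N,p,\mu,a,a)$ is essentially realized along $v$. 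An estimate analogous to the one in the proof of Lemma~\ref{claim2.2} then yields
\[
\sup_{t\geq 0}\varphi(tv) < \left(\frac{1}{p}-\frac{1}{p^*(a,a)}\right) K(N,p,\mu,a,a)^{-p^*(a,a)/(p^*(a,a)-p)},
\]
and combining with the strictly positive contribution of the $c$-critical term, as in the proof of Lemma~\ref{claim2.3}, gives $0<d_{\mathrm{cyl}}<d_*$.

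Finally, the compactness machinery of Sections~\ref{sec:convergenciafracazero} and~\ref{conclusaoteoexistencia} transfers to $X_{\mathrm{cyl}}$ without essential change: the action~\eqref{eq:uhtilde} preserves $X_{\mathrm{cyl}}$ whenever the translations $\eta_n$ lie in $\mathbb{R}^{N-k}$, so Lemma~\ref{claim4.2} yields a rescaled Palais--Smale sequence $(\tilde u_n)\subset X_{\mathrm{cyl}}$; the almost-everywhere convergence of gradients of Lemmas~\ref{benmouloudlema2} and~\ref{benmouloudlema3} and the passage to the limit in the Euler--Lagrange equation in Proposition~\ref{fprclaim4.3} go through verbatim. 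The nontrivial weak limit $\tilde u_\infty\in X_{\mathrm{cyl}}$ then solves~\eqref{problema} in the full space by symmetric criticality. The main obstacle is precisely the construction of the test function $v$ in the regime $\mu\leq\mu^*$, where the extremal produced by Bhakta~\cite{MR2934676} is no longer available: one must exhibit, within $X_{\mathrm{cyl}}$, a minimizing family for $1/K(N,p,\mu,a,a)$ along which the $c$-critical contribution does not vanish in the limit, which is what secures the strict inequality $d_{\mathrm{cyl}}<d_*$.
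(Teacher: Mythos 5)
Your overall strategy---restricting to the cylindrically symmetric subspace and invoking symmetric criticality---matches the paper's, but the proposal has a genuine gap exactly at the step you yourself flag as the ``main obstacle,'' and that step is the whole point of the theorem. You try to keep the full-space constants: you want a symmetric test function $v$ along which the quotient defining $1/K(N,p,\mu,a,a)$ is ``essentially realized,'' and you compare the resulting mountain pass level with the full-space threshold $d_*$ of~\eqref{intervalod}. This cannot work in general: when symmetry breaking occurs, the infimum restricted to the symmetric class, $1/K_{\mathrm{cyl}}(N,p,\mu,a,a)$ as in~\eqref{melhorconstantebcil}, is strictly larger than $1/K(N,p,\mu,a,a)$, so no cylindrically symmetric function can approach the full-space infimum and the inequality $\sup_{t}\varphi(tv)<\left(\tfrac{1}{p}-\tfrac{1}{p^*(a,a)}\right)K(N,p,\mu,a,a)^{-p^*(a,a)/(p^*(a,a)-p)}$ is unobtainable with symmetric $v$. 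Your fallback, adapting the translation argument of Theorem~\ref{teo:existenciamunegativo} to the $x$-direction, also fails for $a>0$: that argument hinges on $a=0$ so that only the potential term carries a $y$-singularity and can be pushed to infinity by translating in $y$; translating in $x$ leaves every $y$-dependent weight (including $|y|^{-ap}$ in the operator) unchanged and gains nothing.

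The paper's resolution is to change the constant, not the test function: it works throughout with the \emph{cylindrical} best constant, citing Bhakta~\cite[Theorem~1.3]{MR2934676} for the existence, for every $\mu<\bar{\mu}$, of a nonnegative cylindrically symmetric extremal attaining $1/K_{\mathrm{cyl}}(N,p,\mu,a,a)$, and it redefines the compactness threshold as $d_{*,\mathrm{cyl}}$ using $K_{\mathrm{cyl}}$ in place of $K$. Since the Palais--Smale sequences now live in the symmetric class, the concentration estimates of Lemma~\ref{claim3.2} can be run with $K_{\mathrm{cyl}}$, so the level produced by the symmetric extremal sits strictly below $d_{*,\mathrm{cyl}}$ and the machinery of sections~\ref{sec:passodamontanha}--\ref{conclusaoteoexistencia} closes. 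In short: the existence of the symmetric extremal in the regime $\mu\leqslant\mu^*$ is not an open obstacle to be overcome by a clever test function---it is supplied by the cited result, provided one consistently replaces $K$ by $K_{\mathrm{cyl}}$, which your proposal does not do. (On the other hand, your explicit appeal to the principle of symmetric criticality to pass from a critical point of $\varphi|_{X_{\mathrm{cyl}}}$ to a weak solution on the whole space is a justification the paper leaves implicit, and is worth stating.)
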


\begin{proof}
Let us define the subspace
$$
\mathcal{D}_{a,\textrm{cyl}}^{1,p}(\mathbb{R}^N\backslash\{|y|=0\})
\equiv\left\{u \in \mathcal{D}_{a}^{1,p}(\mathbb{R}^N\backslash\{|y|=0\}) \colon u(z) = u(x,|y|)\right\}
$$
as the subset of $\mathcal{D}_{a}^{1,p}(\mathbb{R}^N\backslash\{|y|=0\})$ 
consisting of functions cylindrically symmetric in the variable of the singularity. Similarly, we also define the infimum
\begin{align}\label{melhorconstantebcil}
\frac{1}{K_{\textrm{cyl}}(N,p,\mu,a,b)}\equiv \inf_{u\in \mathcal{D}_{a,\text{cyl}}^{1,p}(\mathbb{R}\backslash \{|y|=0\})}
\frac{\displaystyle\int_{\mathbb{R}^N}\frac{|\nabla u|^p}{|y|^{ap}} \, dz -\mu\int_{\mathbb{R}^N}\frac{|u|^p}{|y|^{p(a+1)}} \, dz}
{\left(\displaystyle\int_{\mathbb{R}^N}\frac{(u_+)^{p^*(a,b)}}{|y|^{bp^*(a,b)}}\, dz\right)^\frac{p}{p^*(a,b)}}.
\end{align}

Using the existence result by 
Bhakta~\cite[Teorema 1.3]{MR2934676}, we deduce that there exists a nonnegative, cylindrically symmetric function 
$u(z) = u(x,y) = u(x,|y|) \in \mathcal{D}_{a,\textrm{cyl}}^{1,p}(\mathbb{R}^N\backslash\{|y|=0\})$ that assumes the infimum 
$1/K_{\textrm{cyl}}(N,p,\mu,a,a)$. 
Using a convenient multiple of this function, we obtain a nonnegative, cylindrically symmetric solution to problem 
\begin{align*}
-\operatorname{div}\left[\frac{|\nabla u|^{p-2}}{|y|^{ap}}\nabla u\right]-\mu\frac{|u|^{p-2}u}{|y|^{p(a+1)}}=\frac{|u|^{p^*(a,a)-2}u}{|y|^{bp^*(a,a)}}, \qquad (x,y) \in \mathbb{R}^{N-k}\times\mathbb{R}^k.
\end{align*}

Finally, defining
\begin{align*}
0 < d < d_{*,\textrm{cyl}}
& \equiv  
\min_{m \in \{a,c\}} \bigg\{
\left(\dfrac{1}{p}-\dfrac{1}{p^*(a,m)}\right)
K_{\textrm{cyl}}
(N,p,\mu,a,m)^{-\frac{p^*(a,m)}{p^*(a,m)-p}}\bigg\},
\end{align*}
we can repeat the proofs of the several results in sections~\ref{sec:passodamontanha} and~\ref{sec:convergenciafracazero};
thus, we can extend Theorem~\ref{teo:existencia} 
to the cases 
$0 \leqslant a=b < (k-p)/p$ 
and $\mu <\bar{\mu}$. 
This concludes the proof of the theorem.
\end{proof}

\section{The inductive step of the Moser's iteration scheme}
\label{regularidadesecao1}

In this section we state some auxiliary results that are important to prove our regularity theorem.
It is worth mentioning that our result does not follow directly from the corresponding regularity result by
Filippucci, Pucci, and Robert~\cite{MR2498753}, 
since they applied the conclusions of the theorems by
Druet~\cite{MR1776675}, and by 
Guedda and Ver\'{o}n~\cite{MR1009077}; 
in our case we have to deal, among other things, with the singularity on the differential operator. 
In our proof we apply Moser's iteration scheme and we follow closely the arguments by 
Pucci and Servadei~\cite[Theorem 2.2]{MR2492235}.

Our first result can be stated as follows.

\begin{lema}
\label{phi}
Let $\Omega \subset \mathbb{R}^N\backslash\{|y|=0\}$ 
be a not necessarily bounded domain 
and let
$\eta \colon \Omega \to \mathbb{R}$ 
be a differentiable function with compact support. 
Given the constants $d>0$ and $l>1$, let the functions
$\rho \colon \mathbb{R} \to \mathbb{R}$ and
$\zeta \colon \mathbb{R} \to \mathbb{R}$ be defined by
\begin{align}
\label{2.2}
\rho(t)\equiv t\min\{|t|^{dp},l^{p}\}
\quad \text{and} \quad \zeta(t)\equiv\rho(t)|\eta|^{p}.
\end{align}
Then $\zeta(u) \in \mathcal{D}_{a,0}^{1,p}(\Omega)$.
\end{lema}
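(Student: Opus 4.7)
The plan is to verify the two conditions defining membership in $\mathcal{D}_{a,0}^{1,p}(\Omega)$: that $\zeta(u)$ admits a weak gradient with finite weighted norm $\|\nabla \zeta(u)\|_{L^p_a(\Omega)}$, and that $\zeta(u)$ can be approximated in this norm by functions in $C_c^\infty(\Omega)$.

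First I would unpack $\rho$. A case split shows that $\rho(t) = |t|^{dp} t$ for $|t| \leqslant l^{1/d}$ and $\rho(t) = l^p t$ otherwise, so $\rho$ is globally Lipschitz with constant bounded by $(dp+1)l^p$, satisfies $\rho(0)=0$, and is piecewise $C^1$ with $|\rho'(t)| \leqslant (dp+1)l^p$. In particular, $|\rho(t)| \leqslant l^p|t|$. By the standard chain rule for composition of a Sobolev function with a Lipschitz function, $\rho(u) \in W^{1,p}_{\operatorname{loc}}(\Omega)$ with $\nabla\rho(u) = \rho'(u)\nabla u$ almost everywhere.

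Next I would apply the Leibniz rule to get
\[
\nabla \zeta(u) = \rho'(u)|\eta|^p \nabla u + p\,\rho(u)|\eta|^{p-2}\eta\,\nabla\eta,
\]
and record that $\operatorname{supp}\zeta(u) \subset K \equiv \operatorname{supp}\eta \Subset \Omega$. Using the bounds on $\rho$ and $\rho'$,
\[
\int_{\Omega} \frac{|\nabla \zeta(u)|^p}{|y|^{ap}}\,dz
\leqslant C_1 \|\eta\|_\infty^{p^2} \int_{\Omega}\frac{|\nabla u|^p}{|y|^{ap}}\,dz
+ C_2 \int_{K} \frac{|u|^p |\nabla \eta|^p}{|y|^{ap}}\,dz.
\]
The first term is finite because $u \in \mathcal{D}_a^{1,p}(\Omega)$; for the second, since $K \Subset \Omega \subset \mathbb{R}^N\setminus\{|y|=0\}$, the weight $|y|$ is bounded away from $0$ and $\infty$ on $K$, so after dividing and multiplying by $|y|^{p(a+1)}$ the integral is controlled by a constant times $\int_{\Omega} |u|^p/|y|^{p(a+1)}\,dz$, which is finite by Maz'ya's Hardy-type inequality recalled in the preceding section.

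For the final density step, since $\zeta(u)$ has compact support in $K \Subset \Omega$ where the weight is equivalent to a positive constant, $\zeta(u)$ belongs to the unweighted $W^{1,p}$ with compact support in $\Omega$, and a standard mollification argument produces an approximating sequence in $C_c^\infty(\Omega)$ converging to $\zeta(u)$ in the $\mathcal{D}_a^{1,p}(\Omega)$-norm. The main obstacle is the rigorous justification of the chain rule for $\rho\circ u$ in the weighted setting; however, because the conclusion is local and the weight is equivalent to a constant on $K$, the classical unweighted chain rule for Lipschitz compositions suffices, and the remaining steps are routine.
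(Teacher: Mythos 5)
Your proof is correct and follows essentially the same route as the paper's: both reduce the claim to the chain rule for the composition of a Sobolev function with the piecewise-$C^1$, globally Lipschitz truncation $\rho$ (the paper packages this as a splitting of $\operatorname{supp}\eta$ into the subsets where $|u|^{d}$ is below, equal to, or above $l$ and invokes two Gilbarg--Trudinger lemmas there, while you invoke the global Lipschitz chain rule directly). If anything, your write-up is more complete, since you also supply the finiteness of $\|\nabla\zeta(u)\|_{L^p_a(\Omega)}$ via the boundedness of $|y|^{\pm 1}$ on $\operatorname{supp}\eta$ together with the Maz'ya--Hardy inequality, as well as the mollification step needed for membership in $\mathcal{D}^{1,p}_{a,0}(\Omega)$, both of which the paper leaves implicit.
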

\begin{proof}
We begin by defining the subsets
\[
  \begin{array}{r@{{}\equiv{}}l@{\qquad}r@{{}\equiv{}}l}
    \Omega_0 
    & \{z\in \operatorname{supp} \eta: |u(z)|^d < l\}, 
    & \Omega_1 
    & \{z\in \operatorname{supp} \eta: |u(z)|^d = l\}, \\[\jot]
\multicolumn{4}{c}{\Omega_2 \equiv
\{z\in \operatorname{supp} \eta: |u(z)|^d > l\}.}
  \end{array}
\]
Using a result by 
Gilbarg and Trudinger~\cite[Lemma~7.7]{MR1814364},
it follows that
$\rho \in \mathcal{D}_{a}^{1,p}(\Omega_1)$ 
and
$\rho \in \mathcal{D}_{a}^{1,p}(\Omega_2)$, 
because 
$u \in \mathcal{D}_{a,\textrm{loc}}^{1,p}(\Omega)$. 
It remains to show that
$\rho \in \mathcal{D}_{a}^{1,p}(\Omega_0)$.

To do this we consider the function
$G: \mathbb{R} \to \mathbb{R}$ 
defined by
\begin{align*}
G(t) & \equiv
\begin{cases}
t|t|^{pd}, & \textrm{if $|t|^d \leqslant l$};\\
(pd+1)l^pt - pdl^{(pd+1)/d}, &\textrm{if $t^d>l$},\\
(pd+1)l^pt + pdl^{(pd+1)/d}, &\textrm{if $t<-l^{\frac{1}{d}}$.}
\end{cases}
\end{align*}
It is clear that
$G(0)=0$ and $G \in C^1(\mathbb{R})$; 
moreover,
$G' \in L^\infty(\mathbb{R})$. 
Since $\rho (u) = G\circ u|_{\Omega_0}$, 
using another result by 
Gilbarg and Trudinger~\cite[Lemma~7.5]{MR1814364} we deduce that
$\rho \in \mathcal{D}_{a}^{1,p}(\Omega_0)$. 
In $\Omega_0$ we have
\begin{align*}
\nabla \rho(u) &= \nabla(G(u)) = \nabla(u|u|^{pd})= (1+pd)|u|^{pd}\nabla u.
\end{align*}
In $\Omega_2$ we have $|u(z)|^d > l$; 
hence, $\rho(u) = u \min \{|u|^{pd},l^p\} = ul^p$ 
and $\nabla\rho(u) = \nabla(l^pu)=l^p\nabla u.$ 
Finally, in $\Omega_1$
we have $\nabla\rho(u) = 0$ 
because $|u(z)|=l^{1/d}$. Thus, we obtain
\begin{align}\label{2.3}
\nabla\rho(u) &= 
\begin{cases}
(pd+1)|u|^{pd}\nabla u, 
& \textrm{if $z \in \Omega_0$};\\
0, 
& \textrm{if $z \in \Omega_1$};\\
l^p\nabla u, 
& \textrm{if $z \in\Omega_2$.}
\end{cases}
\end{align}
And since $\eta \in C^1_0(\Omega)$,
it follows that
$\zeta \in \mathcal{D}_{a,0}^{1,p}(\Omega)$.
This completes the proof of the lemma.
\end{proof}

Now we prove a crucial result that has an independent value.

\begin{lema}
\label{lema2.18}
Let
$\Omega \subset \mathbb{R}^N\backslash\{|y|=0\}$ 
be a not necessarily bounded domain.
Consider the parameters in the already specified intervals
and let $u\in\mathcal{D}_a^{1,p}(\Omega)$ 
be a weak solution to 
problem~\eqref{problemapucciservadei}. 
For every \( d > 0 \) it is valid the proposition
\begin{align}
\label{2.18}
\textrm{If } u \in 
\mathcal{D}_{a,\operatorname{loc}}^{1,p}(\Omega) 
 \cap L_{a/(d+1),
 \operatorname{loc}}^{p(d+1)}(\Omega), 
\textrm{ then } 
u \in L_{b/(d+1),
\operatorname{loc}}^{p^*(a,b)(d+1)}(\Omega) 
 \cap L_{c/(d+1),
\operatorname{loc}}^{p^*(a,c)(d+1)}(\Omega).
\end{align}
\end{lema}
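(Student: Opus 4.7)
\medskip

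The plan is to run the standard Moser iteration step in the weighted setting: use the truncated-power test function $\zeta(u)=\rho(u)|\eta|^p$ from Lemma~\ref{phi} in the weak formulation~\eqref{solucaofraca} of problem~\eqref{problemapucciservadei}, translate the resulting control on $\int\rho'(u)|\nabla u|^p\eta^p/|y|^{ap}\,dz$ into $L^{p^*(a,m)}$-integrability of $u^{d+1}$ via Maz'ya's inequality~\eqref{eq:maz}, and pass to the limit $l\to\infty$, with the finiteness of the limit guaranteed by the hypothesis $u\in L^{p(d+1)}_{a/(d+1),\mathrm{loc}}(\Omega)$. Fix a cutoff $\eta\in C_c^\infty(\Omega)$ with $0\leqslant\eta\leqslant 1$ and define the primitive $\Phi(t)\equiv\int_0^t(\rho'(s))^{1/p}\,ds$, chosen so that $|\nabla\Phi(u)|^p=\rho'(u)|\nabla u|^p$ pointwise; a direct computation from~\eqref{2.3} yields $|\Phi(u)|\leqslant C(d,p)\,|u|\min\{|u|^d,l\}\leqslant C|u|^{d+1}$ uniformly in $l$.

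Plugging $\zeta(u)$ into~\eqref{solucaofraca} and expanding via~\eqref{2.3}, the leading term $\int\rho'(u)|\nabla u|^p\eta^p/|y|^{ap}\,dz$ emerges, while the mixed cross term in $\nabla\eta$ is absorbed by Young's inequality into a small multiple of the leading term plus a remainder controlled by $\int_{\operatorname{supp}\eta}|u|^{p(d+1)}|\nabla\eta|^p/|y|^{ap}\,dz<\infty$. The $\mu$-term is handled analogously, using that $|y|\geqslant\delta>0$ on $\operatorname{supp}\eta\Subset\Omega$ to trade the weight $|y|^{-p(a+1)}$ for $|y|^{-ap}$. For each $m\in\{b,c\}$ the corresponding critical term on the right-hand side is estimated by H\"older's inequality with exponents $p^*(a,m)/(p^*(a,m)-p)$ and $p^*(a,m)/p$:
\begin{align*}
\int\frac{(u_+)^{p^*(a,m)-1}\rho(u)|\eta|^p}{|y|^{mp^*(a,m)}}\,dz
&\leqslant C\left(\int_{\operatorname{supp}\eta}\frac{|u|^{p^*(a,m)}}{|y|^{mp^*(a,m)}}\,dz\right)^{\frac{p^*(a,m)-p}{p^*(a,m)}} \\
&\qquad \times \left(\int\frac{|\Phi(u)\eta|^{p^*(a,m)}}{|y|^{mp^*(a,m)}}\,dz\right)^{\frac{p}{p^*(a,m)}},
\end{align*}
where the pointwise bound $(u_+)^{p^*(a,m)-1}|\rho(u)|\leqslant|u|^{p^*(a,m)}\min\{|u|^{pd},l^p\}$ has been used. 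Since $u\in L^{p^*(a,m)}_{m}(\mathbb{R}^N)$ by~\eqref{eq:maz}, the first factor tends to zero as $|\operatorname{supp}\eta|\to 0$ by absolute continuity of the integral.

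Applying Maz'ya's inequality~\eqref{eq:maz} to $\Phi(u)\eta\in\mathcal{D}_a^{1,p}(\mathbb{R}^N\setminus\{|y|=0\})$ for each $m\in\{b,c\}$ yields
\begin{equation*}
A_m\equiv\left(\int\frac{|\Phi(u)\eta|^{p^*(a,m)}}{|y|^{mp^*(a,m)}}\,dz\right)^{p/p^*(a,m)}\leqslant K(N,p,0,a,m)\int\frac{|\nabla(\Phi(u)\eta)|^p}{|y|^{ap}}\,dz.
\end{equation*}
Combining the previous steps, I obtain the coupled system
\[A_b\leqslant K_b\bigl(\epsilon_b(\eta)A_b+\epsilon_c(\eta)A_c+C\bigr),\qquad A_c\leqslant K_c\bigl(\epsilon_b(\eta)A_b+\epsilon_c(\eta)A_c+C\bigr),\]
with $\epsilon_b(\eta),\epsilon_c(\eta)\to 0$ as the diameter of $\operatorname{supp}\eta$ shrinks. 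Choosing $\eta$ so that $(K_b+K_c)(\epsilon_b(\eta)+\epsilon_c(\eta))<1$, both $A_b$ and $A_c$ are bounded uniformly in $l$. Monotone convergence as $l\to\infty$ together with $|\Phi(u)|\to c_{d,p}|u|^{d+1}$ then delivers the desired inclusions on $\{\eta=1\}$, and a standard finite-cover argument extends the conclusion to every compact subset of $\Omega$.

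The main obstacle is the coupling caused by the two critical nonlinearities: one must absorb both the $b$- and $c$-critical terms into the Maz'ya bound \emph{simultaneously}, forcing the cutoff $\eta$ to have support small enough that \emph{both} H\"older small-factors fall below the critical absorption threshold at once. A secondary technical point is the uniformity in the truncation parameter $l$, which relies on the pointwise bound $|\Phi(u)|\leqslant C|u|^{d+1}$ and on the integrability hypothesis $u\in L^{p(d+1)}_{a/(d+1),\mathrm{loc}}(\Omega)$ to keep the remainder terms (from the $\mu$-contribution and from the mixed gradient term) uniformly bounded after absorption.
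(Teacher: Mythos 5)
Your proposal is correct and follows essentially the same route as the paper: the truncated test function $\rho(u)|\eta|^p$, absorption of the gradient cross term by Young's inequality, H\"older plus Maz'ya to produce a coupled pair of inequalities in the two critical norms, decoupling by shrinking $\operatorname{supp}\eta$ so that both absorption coefficients are small (the paper makes them $\leqslant 1/3$ and adds the two inequalities), then $l\to\infty$ and a finite cover. Your auxiliary function $\Phi$ is, up to constants, the paper's $\tilde{\rho}(t)=t\min\{|t|^d,l\}$, so the two arguments coincide in substance.
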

\begin{proof}
Let the function
$u\in \mathcal{D}^{1,p}_{a,\textrm{loc}}(\Omega)$ 
be a weak solution to problem~\eqref{problemapucciservadei}. 
We define the function
\( f \colon \Omega \times
\mathcal{D}^{1,p}_{a,\textrm{loc}}(\Omega)
\to \mathbb{R} \) by
\begin{align*}
f(z,u)= \mu\frac{|u|^{p-2}u}{|y|^{p(a+1)}}+\frac{(u_+)^{p^*(a,b)-1}}{|y|^{bp^*(a,b)}}+\frac{(u_+)^{p^*(a,c)-1}}{|y|^{cp^*(a,c)}},
\end{align*}
where we denote \( z = (x,y) \in \Omega \). And for \( l > 1 \) we define the function
$\tilde\rho \colon \mathbb{R} \to \mathbb{R}$ 
by
$\tilde{\rho}(t)  \equiv t\min\{|t|^d,l\}$.
Arguing as we have already done in the proof of Lemma~\ref{phi}, we obtain
\begin{align}\label{2.7}
\nabla\tilde{\rho}(u) &= 
\begin{cases}
(d+1)|u|^{d}\nabla u, 
& \textrm{if $z \in \Omega_0$};\\
0, 
& \textrm{if $z \in \Omega_1$};\\
l\nabla u,
& \textrm{if $z \in \Omega_2$}.
\end{cases}
\end{align}
Moreover, it is valid the inequality
\begin{align}\label{comparacaogradientespucciservadei}
|\nabla u |^{p-1}|\rho| \leqslant |\nabla \tilde{\rho}|^{p-1}|\tilde{\rho}| \quad \textrm{a.\@ e.\@ in } \Omega.
\end{align}

Using the function 
$\zeta = \rho|\eta|^p$ 
as a test function in equation~\eqref{solucaofraca}, as well the definition of $f$, we obtain
\begin{align}
\label{2.19}
\int_\Omega \frac{|\nabla u|^{p-2}}{|y|^{ap}} \langle \nabla u , \nabla \zeta \rangle \, dz 
& \leqslant \int_\Omega |\tilde{\rho}\eta|^p\left(|\mu|\frac{1}{|y|^{p(a+1)}}+\frac{|u|^{p^*(a,b)-p}}{|y|^{bp^*(a,b)}}
+\frac{|u|^{p^*(a,c)-p}}{|y|^{cp^*(a,c)}}\right)\, dz.
\end{align}

Using the definition of the function $\zeta$, together with Bernoulli's inequality, that is,
$0 < pd+1 < (d+1)^p $, for $d \geqslant -1$ and $p > 1$, we deduce that
\begin{align*}
{} & \int_\Omega \frac{|\nabla u|^{p-2}}{|y|^{ap}} \langle \nabla u , \nabla \zeta \rangle \, dz \nonumber \\
{} & \qquad =(pd+1)\int_{\Omega_0} \frac{|\nabla u|^{p-2}}{|y|^{ap}}|\eta|^p|u|^{pd}\langle\nabla u , \nabla u\rangle\, dz
+ \int_{\Omega_2} \frac{|\nabla u|^{p-2}}{|y|^{ap}}|\eta|^pl^p\langle\nabla u , \nabla u\rangle\, dz \nonumber \\
{} & \qquad \qquad + \int_\Omega \frac{|\nabla u|^{p-2}}{|y|^{ap}}\rho p |\eta|^{p-2}\eta \langle\nabla u , \nabla \eta\rangle\, dz \nonumber \\
{} &\qquad > \frac{(pd+1)}{(d+1)^p}\int_\Omega \frac{|\eta \nabla\tilde{\rho}|^p}{|y|^{ap}}\, dz
- p\int_\Omega \frac{|\nabla u|^{p-1}}{|y|^{ap}}|u|\min\{|u|^{pd},l^p\} |\eta|^{p-1}|\nabla \eta|\,dz.
\end{align*}
Isolating the first term on the right-hand side of the previous inequality, and using
inequalities~\eqref{comparacaogradientespucciservadei}
and~\eqref{2.19}, we deduce that
\begin{align*}
\frac{(pd+1)}{(d+1)^p}\int_\Omega \frac{|\eta \nabla\tilde{\rho}|^p}{|y|^{ap}}\, dz
&<\int_\Omega |\tilde{\rho}\eta|^p\left(\frac{|\mu|}{|y|^{p(a+1)}}+\frac{|u|^{p^*(a,b)-p}}{|y|^{bp^*(a,b)}}
+\frac{|u|^{p^*(a,c)-p}}{|y|^{cp^*(a,c)}}\right)\, dz \nonumber\\
& \qquad +p\int_\Omega \frac{|\eta \nabla \tilde{\rho}|^{p-1}}{|y|^{ap}}|\tilde{\rho}\nabla \eta| \, dz.
\end{align*}

On the other hand, using Young's inequality on the integrand of the last term but without the singularity, we find
\begin{align*}
|\eta\nabla\tilde{\rho}|^{p-1}|\tilde{\rho}\nabla \eta| 
&\leqslant \frac{|\eta \nabla \tilde{\rho}|^p}{p'\epsilon^{p'}} + \frac{\epsilon^p|\tilde{\rho}\nabla\eta|^p}{p};
\end{align*}
and since $1 < pd+1 $, it follows that
\begin{align*}
\frac{1}{(d+1)^p}\int_\Omega \frac{|\eta \nabla \tilde{\rho}|^{p}}{|y|^{ap}} \, dz
&< \int_\Omega |\tilde{\rho}\eta|^p\left(\frac{|\mu|}{|y|^{p(a+1)}}+\frac{|u|^{p^*(a,b)-p}}{|y|^{bp^*(a,b)}}
+\frac{|u|^{p^*(a,c)-p}}{|y|^{cp^*(a,c)}}\right)\, dz \nonumber\\
&\qquad + \frac{p-1}{\epsilon^{p'}}\int_\Omega \frac{|\eta \nabla \tilde{\rho}|^{p}}{|y|^{ap}} \, dz
+ \epsilon^p\int_\Omega \frac{|\tilde{\rho}\nabla\eta|^{p}}{|y|^{ap}} \, dz.
\end{align*}
Consequently,
\begin{align*}
\left(\frac{1}{(d+1)^p}- \frac{p-1}{\epsilon^{p'}}\right)\int_\Omega \frac{|\eta \nabla \tilde{\rho}|^{p}}{|y|^{ap}} \, dz
&< \int_\Omega |\tilde{\rho}\eta|^p\left(\frac{|\mu|}{|y|^{p(a+1)}}+\frac{|u|^{p^*(a,b)-p}}{|y|^{bp^*(a,b)}}
+\frac{|u|^{p^*(a,c)-p}}{|y|^{cp^*(a,c)}}\right)\, dz\\
& \qquad + \epsilon^p\int_\Omega \frac{|\tilde{\rho}\nabla\eta|^{p}}{|y|^{ap}} \, dz.
\end{align*}
Now we choose $\epsilon^{p'}=2(p-1)(d+1)^p$ 
and we denote
$c_1= \max\{2,2^p(p-1)^{p-1}\}$;
hence, 
\begin{align}\label{2.23}
\left\|\eta\nabla \tilde{\rho}\right\|_{L_a^p(\Omega)}^p
&< 2(d+1)^p\int_\Omega |\tilde{\rho}\eta|^p\left(\frac{|\mu|}{|y|^{p(a+1)}}+\frac{|u|^{p^*(a,b)-p}}{|y|^{bp^*(a,b)}}
+\frac{|u|^{p^*(a,c)-p}}{|y|^{cp^*(a,c)}}\right)\, dz\nonumber\\
&\qquad + 2^p(p-1)^{p-1}(d+1)^{p^2}\int_\Omega \frac{|\tilde{\rho}\nabla\eta|^{p}}{|y|^{ap}} \, dz\nonumber\\
&< c_1(d+1)^{p^2} \int_\Omega |\tilde{\rho}|^p\bigg(\frac{|\eta|^p|\mu|}{|y|^{p(a+1)}}+\frac{|\eta|^p|u|^{p^*(a,b)-p}}{|y|^{bp^*(a,b)}}
+\frac{|\eta|^p|u|^{p^*(a,c)-p}}{|y|^{cp^*(a,c)}} +\frac{|\nabla\eta|^p}{|y|^{ap}}\bigg) dz.
\end{align}

Applying H\"{o}lder's inequality to the second and third integrals on the right-hand side of inequality~\eqref{2.23}, we deduce that 
\begin{align}\label{2.24}
\int_\Omega |\tilde{\rho}\eta|^p\frac{|u|^{p^*(a,b)-p}}{|y|^{bp^*(a,b)}}\, dz
&\leqslant \|\tilde{\rho}\eta\|^p_{L^{p^*(a,b)}_b(\Omega)}\|u\|^{p^*(a,b)-p}_{L_b^{p^*(a,b)}(\operatorname{supp}\eta)}
\end{align}
and
\begin{align}\label{2.24a}
\int_\Omega |\tilde{\rho}\eta|^p\frac{|u|^{p^*(a,c)-p}}{|y|^{cp^*(a,c)}}\, dz
&\leqslant \|\tilde{\rho}\eta\|^p_{L^{p^*(a,c)}_c(\Omega)}\|u\|^{p^*(a,c)-p}_{L_c^{p^*(a,c)}(\operatorname{supp}\eta)}.
\end{align}

Applying Maz'ya's inequality, together with inequality
\( (X + Y)\sp{p} \leqslant 
2\sp{p-1}(X\sp{p} + Y\sp{p})\) 
for \(X,Y \in \mathbb{R}\sp{+} \)
and $p>1$, 
we obtain
\begin{align}\label{2.24a-2}
\|\tilde{\rho}\eta\|^p_{L^{p^*(a,b)}_b(\Omega)}
&\leqslant K(N,p,\mu,a,b)\|\nabla(\tilde{\rho}\eta)\|^p_{L^{p}_a(\Omega)}\nonumber \\
&\leqslant 2^{p-1}K(N,p,\mu,a,b)\left(\|\eta\nabla\tilde{\rho}\|^p_{L^{p}_a(\Omega)}
+ \|\tilde{\rho}\nabla\eta\|^p_{L^{p}_a(\Omega)}\right).
\end{align}
Combining inequalities~\eqref{2.23},~\eqref{2.24},~\eqref{2.24a} and~\eqref{2.24a-2}, we deduce that
\begin{align}\label{fpr2.24a}
\|\tilde{\rho}\eta\|^p_{L^{p^*(a,b)}_b(\Omega)}
&\leqslant 2^{p-1}K(N,p,\mu,a,b)[c_1(d+1)^{p^2}+1]\int_\Omega|\tilde{\rho}|^p\left(\frac{|\mu||\eta|^p}{|y|^{p(a+1)}}+\frac{|\nabla\eta|^p}{|y|^{ap}}\right)\, dz \nonumber\\
&\qquad +2^{p-1}K(N,p,\mu,a,b)c_1(d+1)^{p^2}\|\tilde{\rho}\eta\|^p_{L^{p^*(a,b)}_b(\Omega)}
\|u\|^{p^*(a,b)-p}_{L^{p^*(a,b)}_b(\operatorname{supp}\eta)} \nonumber\\
&\qquad +2^{p-1}K(N,p,\mu,a,b)c_1(d+1)^{p^2}\|\tilde{\rho}\eta\|^p_{L^{p^*(a,c)}_c(\Omega)}
\|u\|^{p^*(a,c)-p}_{L^{p^*(a,c)}_c(\operatorname{supp}\eta)}.
\end{align}
Similarly, we obtain
\begin{align}\label{fpr2.24b}
\|\tilde{\rho}\eta\|^p_{L^{p^*(a,c)}_c(\Omega)}
&\leqslant 2^{p-1}K(N,p,\mu,a,c)[c_1(d+1)^{p^2}+1]\int_\Omega|\tilde{\rho}|^p\left(\frac{|\mu||\eta|^p}{|y|^{p(a+1)}}+\frac{|\nabla\eta|^p}{|y|^{ap}}\right)\, dz \nonumber\\
&\qquad +2^{p-1}K(N,p,\mu,a,c)c_1(d+1)^{p^2}\|\tilde{\rho}\eta\|^p_{L^{p^*(a,b)}_b(\Omega)}
\|u\|^{p^*(a,b)-p}_{L^{p^*(a,b)}_b(\operatorname{supp}\eta)} \nonumber\\
&\qquad +2^{p-1}K(N,p,\mu,a,c)c_1(d+1)^{p^2}\|\tilde{\rho}\eta\|^p_{L^{p^*(a,c)}_c(\Omega)}
\|u\|^{p^*(a,c)-p}_{L^{p^*(a,c)}_c(\operatorname{supp}\eta)}.
\end{align}

Since $u\in L^{p^*(a,b)}_{b,\textrm{loc}}(\Omega) 
\cap L^{p^*(a,c)}_{c,\textrm{loc}}(\Omega)$, 
for every
$z_0 \in \operatorname{supp}\eta$ 
there exists $R = R(z_0) > 0$ 
such that 
$B_R(z_0)\Subset \operatorname{supp}\eta$ 
and it is valid the inequality
\begin{align}\label{fpr2.26}
\max\sb{m \in \{ b,c \}}
\left(\int_{B_{2R}(z_0)}\frac{|u|^{p^*(a,m)}}{|y|^{mp^*(a,m)}}\, dz\right)^\frac{p^*(a,m)-p}{p^*(a,m)}
&\leqslant \min\sb{m \in \{ b,c \}} 
\left[2^{p+1}K(N,p,\mu,a,m)c_1(d+1)^{p^2}\right]^{-1}.
\end{align}

Now we choose the cut off function 
$\eta \in C_0^1(\Omega)$ with the following additional properties: the support of 
$\eta$ is such that
$\operatorname{supp}\eta
\subset B_{2R}(z_0)$, 
$0\leqslant \eta \leqslant 1$, 
$\eta \equiv 1$ in $B_R(z_0)$, 
and $|\nabla\eta|\leq 2/R$.

Using this cut off function in 
inequality~\eqref{fpr2.24a}, 
together inequality~\eqref{fpr2.26} with the appropriate values for \(m \in \{b,c\}\) on both sides, as well as 
the fact that
$|\mu||y|^{-1}|\eta|^p\leqslant c_2$ in $\operatorname{supp}\eta$ 
for some positive constant $c_2 >0$, 
we deduce that
\begin{align}
\label{2.26-3}
{} & \|\tilde{\rho}\eta\|^p_{L^{p^*(a,b)}_b(B_{2R}(z_0))} \nonumber \\
&\qquad \leqslant \dfrac{2^{p+1}}{3} K(N,p,\mu,a,b)\left[c_1(d+1)^{p^2}+1\right]\int_{B_{2R}(z_0)}|\tilde{\rho}|^p\left(\frac{|\mu||\eta|^p}{|y|^{p(a+1)}}+
\frac{|\nabla\eta|^p}{|y|^{ap}}\right)\, dz \nonumber\\
&\qquad \qquad 
+ \frac{1}{3}\, \|\tilde{\rho}\eta\|^p_{L^{p^*(a,c)}_c(B_{2R}(z_0))} \nonumber \\
&\qquad = \dfrac{2^{p+1}}{3} K(N,p,\mu,a,b)\left[c_1(d+1)^{p^2}+1\right]
\int_{B_{2R}(z_0)}\frac{\left(|u|\min\{|u|^d,l\}\right)^p}{|y|^{ap}}\left(\frac{|\mu||\eta|^p}{|y|^p}+|\nabla\eta|^p\right)\, dz \nonumber \\
&\qquad \qquad 
+ \frac{1}{3}\, \|\tilde{\rho}\eta\|^p_{L^{p^*(a,c)}_c(B_{2R}(z_0))} \nonumber \\
&\qquad \leqslant \dfrac{2^{p+1}}{3} K(N,p,\mu,a,b)\left[c_1(d+1)^{p^2}+1\right]
\left(c_2+\frac{2^{p}}{R^p}\right)
\int_{\operatorname{supp}\eta}\left(\frac{|u|^{d+1}}{|y|^a}\right)^p\, dz \nonumber \\
&\qquad \qquad 
+ \frac{1}{3}\, \|\tilde{\rho}\eta\|^p_{L^{p^*(a,c)}_c(B_{2R}(z_0))} \nonumber \\
& \qquad = c_3\|u\|^{p(d+1)}_{L^{p(d+1)}_{a/(d+1)}(\operatorname{supp}\eta)}
+ \frac{1}{3} \,\|\tilde{\rho}\eta\|^p_{L^{p^*(a,c)}_c(B_{2R}(z_0))} 
\end{align}
where $c_3 = 2^{p}K(N,p,\mu,a,b)\left[c_1(d+1)^{p^2}+1\right]\left(c_2+\dfrac{2^{p}}{R^p}\right)$.

Similarly, we obtain
\begin{align}
\label{2.26-4}
\|\tilde{\rho}\eta\|^p_{L^{p^*(a,c)}_c(B_{2R}(z_0))}
\leqslant c_4\|u\|^{p(d+1)}_{L^{p(d+1)}_{a/(d+1)}(\operatorname{supp}\eta)}
+ \frac{1}{3}\,\|\tilde{\rho}\eta\|^p_{L^{p^*(a,b)}_b(B_{2R}(z_0))},
\end{align}
where $c_4 = 2^{p}K(N,p,\mu,a,c)\left[c_1(d+1)^{p^2}+1\right]\left(c_2+\dfrac{2^{p}}{R^p}\right)$.

Adding termwise both sides of 
inequalities~\eqref{2.26-3} and~\eqref{2.26-4}, 
it follows that
\begin{align}
\label{fpr2.26-5-6}
\|\tilde{\rho}\eta\|^p_{L^{p^*(a,b)}_b(B_{2R}(z_0))}
+ \|\tilde{\rho}\eta\|^p_{L^{p^*(a,c)}_c(B_{2R}(z_0))}
\leqslant \dfrac{3}{2}(c_3+c_4)\|u\|^{p(d+1)}_{L^{p(d+1)}_{a/(d+1)}(\operatorname{supp}\eta)}.
\end{align}

Passing to the limit as 
$l \to \infty$ in the first term 
on the left-hand side of inequality~\eqref{fpr2.26-5-6},
we obtain
\begin{align*}
\|u\|^{p(d+1)}_{L^{p^*(a,b)(d+1)}_{b/(d+1)}(B_{R}(z_0))}
\leqslant \dfrac{3}{2}(c_3+c_4) \|u\|^{p(d+1)}_{L^{p(d+1)}_{a/(d+1)}(\operatorname{supp}\eta)}.
\end{align*}
Since $\operatorname{supp} \eta$ can be covered by a finite number of balls with these properties, say \( M \) balls, using  the notation $c_5=M[(3/2)(c_3+c_4)]^{1/(pd+p)}$ we infer that
\begin{align}\label{2.26c}
\|u\|_{L^{p^*(a,b)(d+1)}_{b/(d+1)}(\operatorname{supp}\eta)}
\leqslant c_5 \|u\|_{L^{p(d+1)}_{a/(d+1)}(\operatorname{supp}\eta)}.
\end{align}
Similarly, we have
\begin{align}\label{2.26d}
\|u\|_{L^{p^*(a,c)(d+1)}_{c/(d+1)}(\operatorname{supp}\eta)}
\leqslant c_5 \|u\|_{L^{p(d+1)}_{a/(d+1)}(\operatorname{supp}\eta)}.
\end{align}
Finally, since the cut off function 
$\eta \in C^1_0(\Omega)$ is arbitrary, we conclude the proof of the lemma.
\end{proof}

Now we use the iteration scheme to conclude that
$u \in L^m_{\gamma, \textrm{loc}}$
for every $m \in [1,+\infty)$ and some appropriate
weight $\gamma$.
\begin{lema}
\label{iteracao}
Let
$\Omega \subset \mathbb{R}^N\backslash\{|y|=0\}$ 
be a not necessarily bounded domain.
Consider the parameters in the already specified intervals and let $u\in\mathcal{D}_a^{1,p}(\Omega)$ 
be a weak solution
to problem~\eqref{problemapucciservadei}. 
Then
$u\in L^m_{\gamma, \textrm{loc}}$, 
for every $m \in [1,+\infty)$ 
and some appropriate weight $\gamma = \gamma(m)$.
\end{lema}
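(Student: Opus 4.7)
The plan is to iterate Lemma~\ref{lema2.18} along a geometrically growing sequence of parameters $d_n$, starting from the integrability that Maz'ya's inequality provides for free. The recurring technical point\,---\,and the only potential obstacle\,---\,is the transfer of integrability between different weights at each iteration. This is harmless, however, because every compact $K \Subset \Omega \subset \mathbb{R}^N \setminus \{|y|=0\}$ avoids the singular set, so on $K$ the factor $|y|$ is bounded above and below, and therefore any two weighted local spaces $L^q_{\gamma_1,\operatorname{loc}}(\Omega)$ and $L^q_{\gamma_2,\operatorname{loc}}(\Omega)$ coincide as vector spaces of measurable functions.

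For the base step, I would invoke Maz'ya's inequality~\eqref{eq:maz} together with $u \in \mathcal{D}_a^{1,p}(\Omega)$ to obtain $u \in L_{b,\operatorname{loc}}^{p^*(a,b)}(\Omega)$. Setting $d_0 \equiv (p^*(a,b)-p)/p > 0$, one has $p(d_0+1) = p^*(a,b)$, and the weight-equivalence remark identifies this integrability with $u \in L_{a/(d_0+1),\operatorname{loc}}^{p(d_0+1)}(\Omega)$. Hence Lemma~\ref{lema2.18} applies with $d = d_0$ and yields $u \in L_{b/(d_0+1),\operatorname{loc}}^{p^*(a,b)(d_0+1)}(\Omega)$, amplifying the integrability exponent by the factor $\sigma \equiv p^*(a,b)/p > 1$.

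For the inductive step, I would define $d_{n+1}$ by the recurrence $p(d_{n+1}+1) = p^*(a,b)(d_n+1)$, which solves to $d_n+1 = \sigma^{n+1}$. Assuming $u \in L_{b/(d_n+1),\operatorname{loc}}^{p^*(a,b)(d_n+1)}(\Omega)$, the same weight-equivalence remark identifies this with $u \in L_{a/(d_{n+1}+1),\operatorname{loc}}^{p(d_{n+1}+1)}(\Omega)$, so that Lemma~\ref{lema2.18} applies at level $d_{n+1}$ and produces $u \in L_{b/(d_{n+1}+1),\operatorname{loc}}^{p^*(a,b)(d_{n+1}+1)}(\Omega)$, which closes the induction.

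Since $p^*(a,b)(d_n+1) = p\,\sigma^{n+2} \to \infty$ as $n \to \infty$, given any prescribed $m \in [1,+\infty)$ it suffices to choose $n$ so large that $p^*(a,b)(d_n+1) \geqslant m$; a local H\"older inequality on every compact subset of $\Omega$ then delivers $u \in L^m_{\gamma(m),\operatorname{loc}}(\Omega)$ for the weight $\gamma(m) \equiv b/(d_n+1)$, which is the desired conclusion. The $c$-branch of Lemma~\ref{lema2.18} provides the analogous companion integrability with $c$ replacing $b$, to be used as input for the Moser scheme carried out in Section~\ref{regularidadesecao2}.
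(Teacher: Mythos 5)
Your proposal is correct and follows essentially the same route as the paper: both bootstrap Lemma~\ref{lema2.18} along the geometric sequence of exponents $p\,(p^*(a,b)/p)^i$ and then descend to arbitrary $m$ by H\"older's inequality on compact sets. The only real difference is that you make explicit the transfer between the output weight $b/(d_n+1)$ and the input weight $a/(d_{n+1}+1)$ at each step via the observation that $|y|$ is bounded above and below on every compact subset of $\Omega$ (so all local weighted spaces with a fixed exponent coincide); the paper accomplishes the same thing implicitly through its second index sequence $e_i+1=(d_{i-1}+1)a/b$, and your version is arguably cleaner since it avoids the division by $b$.
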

\begin{proof}
Using an index notation, 
we rewrite part of proposition~\eqref{2.18} in the form
\begin{align*}
\textrm{If } u \in 
\mathcal{D}_{a,\operatorname{loc}}^{1,p}(\Omega) \cap L_{a/(e_{i-1}+1),\operatorname{loc}}^{p(d_{i-1}+1)}(\Omega),
\textrm{ then } 
u\in L_{a/(e_{i}+1),\operatorname{loc}}^{p(d_{i}+1)}(\Omega).
\end{align*}

Now we choose these indexes so that
\( d_0 = 0 \), \( e_0 = 0 \),
\( d_{i}+1  =(d_{i-1}+1)p^*(a,b)/p \),
and 
\( e_i+1 =(d_{i-1}+1)a/b \)
for \( i = 1, 2, 3, \dots \); this implies that
\( d_{i}+1 = (p^*(a,b)/p)^i \)
and
\( e_i+1 =(p^*(a,b)/p)^{i-1}(a/b) \)
for \( i = 1, 2, 3, \dots \);
therefore,
\begin{align*}
\mbox{If } 
u\in\mathcal{D}_{a,\operatorname{loc}}^{1,p}(\Omega) 
\cap L_{b(p/p^*(a,b))^{i-2},\operatorname{loc}}^{p(p^*(a,b)/p)^{i-1}}(\Omega),
\text{ then } u\in L_{b(p/p^*(a,b))^{i-1},\operatorname{loc}}^{p(p^*(a,b)/p)^{i}}(\Omega), \qquad (i=2,3,4, \dots).
\end{align*}
Similarly,
\begin{align*}
\mbox{If } u\in\mathcal{D}_{a,\operatorname{loc}}^{1,p}(\Omega) 
\cap L_{c(p/p^*(a,c))^{i-2},\operatorname{loc}}^{p(p^*(a,c)/p)^{i-1}}(\Omega),
\mbox{ then }
u\in L_{c(p/p^*(a,c))^{i-1},\operatorname{loc}}^{p(p^*(a,c)/p)^{i}}(\Omega), \qquad (i=2,3,4, \dots).
\end{align*}
This concludes the proof of the lemma.
\end{proof}

\section{Conclusion of the proof of Theorem~\ref{teo:reg}}
\label{regularidadesecao2}
Now we finish the proof of Theorem~\ref{teo:reg}, 
that is, we show that if the function
$u\in\mathcal{D}_a^{1,p}(\Omega)$ is a weak solution to problem~\eqref{problemapucciservadei},
then $u \in L^\infty_{\textrm{loc}}(\Omega)$.

\begin{proof}[Proof of Theorem~\ref{teo:reg}]
Let $E\subset\mathbb{R}^N$ be a bounded domain with 
boundary of class $C^1$ and let
$\widetilde{\Omega}$ be a bounded set so that
$E \Subset \widetilde{\Omega} \Subset \Omega$. 
Repeating the same arguments of section~\ref{regularidadesecao1} and choosing
the cut off function \( \eta \colon \Omega \to \mathbb{R}\) so that \( 0 \leqslant \eta \leqslant 1 \),
\( \operatorname{supp} \eta \subset \widetilde{\Omega} \) and
\( \eta\equiv 1\) in \( E \), 
by the inequality similar to 
inequality~\eqref{2.23}, we deduce that
\begin{align}\label{2.27}
\left\|\nabla \tilde{\rho}\right\|_{L_a^p(E)}^p
&\leqslant c_1(d+1)^{p^2} \int_E |\tilde{\rho}|^p\left(\frac{|\mu|}{|y|^{p(a+1)}}+\frac{|u|^{p^*(a,b)-p}}{|y|^{bp^*(a,b)}}\right)\, dz.
\end{align}
Using Maz'ya's inequality, we have
\begin{align*}
\mbox{If } u\in\mathcal{D}_{a,\operatorname{loc}}^{1,p}(E) 
\cap L_{a,\operatorname{loc}}^{p}(E),
\mbox{ then } u\in L^{p^*(a,b)}_{b,\operatorname{loc}}(E).
\end{align*}
Hence, by the inductive step proved in 
section~\ref{regularidadesecao1}, we obtain
\begin{align*}
\mbox{If } u\in\mathcal{D}_{a,\operatorname{loc}}^{1,p}(E) 
\cap L_{b,\operatorname{loc}}^{p^*(a,b)}(E),
\mbox{ then } u\in L_{pb/p^*(a,b),\textrm{loc}}^{p(p^*(a,b)/p)^{2}}(E).
\end{align*}

Applying H\"{o}lder's inequality to the second term in inequality~\eqref{2.27}, we deduce that
\begin{align}\label{2.28}
\int_E \frac{|\tilde\rho|^p|u|^{p^*(a,b)-p}}{|y|^{bp^*(a,b)}}\,dz
&\leq\|\tilde\rho\|_{L_{br(a,b)}^{p^*(a,b)/r(a,b)}(E)}^p\|u\|_{L_{bp/p^*(a,b)}^{p^*(a,b)^2/p}(E)}^{p^*(a,b)-p},
\end{align}
where 
$r(a,b)=(p^*(a,b)^2-pp^*(a,b)+p^2)/pp^*(a,b) > 1 $
because 
\( a \leqslant b < a + 1 \).

By the choice of the set $E$, by the continuity of the embedding  
$\mathcal{D}_a^{1,p} \hookrightarrow L_b^{p^*(a,b)}(E)$ and by inequalities~\eqref{2.27} and~\eqref{2.28}, 
it follows that
\begin{align}
\label{2.28a}
\|\tilde{\rho}\|_{L_b^{p^*(a,b)}(E)}^p
&\leqslant K(N,p,\mu,a,b)\left(\int_E\left|\frac{\nabla\tilde{\rho}}{|y|^{a}}\right|^{p}\, dz
-\mu\int_E\left|\frac{\tilde{\rho}}{|y|^{a+1}}\right|^{p}\, dz\right)\nonumber\\
&\leqslant K(N,p,\mu,a,b)c_1(d+1)^{p^2}|\mu|\|\tilde\rho\|_{L_{a+1}^{p}(E)}^p\nonumber\\
&\qquad +K(N,p,\mu,a,b)c_1(d+1)^{p^2}\|\tilde\rho\|_{L_{br(a,b)}^{p^*(a,b)/r(a,b)}(E)}^p\|u\|_{L_{bp/p^*(a,b)}^{p^*(a,b)^2/p}(E)}^{p^*(a,b)-p}\nonumber\\
&\qquad -\mu K(N,p,\mu,a,b)\|\tilde\rho\|_{L_{a+1}^p(E)}^p.
\end{align}

Passing to the limit as
$l \to \infty$ and using the definition of  
$\tilde{\rho}$, we obtain 
\begin{align}\label{2.29}
\|u\|_{L_\frac{b}{d+1}^{p^*(a,b)(d+1)}(E)}^{p(d+1)} 
&\leqslant K(N,p,\mu,a,b)c_1(d+1)^{p^2}|\mu|\|u\|_{L_\frac{a+1}{d+1}^{p(d+1)}(E)}^{p(d+1)}\nonumber\\
&\qquad +K(N,p,\mu,a,b)c_1(d+1)^{p^2}\|u\|_{L_\frac{br(a,b)}{d+1}^{\frac{p^*(a,b)(d+1)}{r(a,b)}}(E)}^{p(d+1)} 
\|u\|_{L_\frac{bp}{p^*(a,b)}^{p^*(a,b)^2/p}(E)}^{p^*(a,b)-p} \nonumber\\
&\qquad -\mu K(N,p,\mu,a,b)\|u\|_{L_\frac{a+1}{d+1}^{p(d+1)}(E)}^{p(d+1)}.
\end{align}

Now we are going to estimate the first and the last  terms on the right-hand side of inequality~\eqref{2.29}. Since $u \in L_\textrm{loc}^m(\Omega)$ for every
$m \in [1,+\infty)$ and $E \Subset \widetilde{\Omega} \Subset \Omega$ is a bounded set, 
we have $|y|\geqslant \frac{1}{M_E}$ 
for every $z=(x,y) \in E$.
Hence, applying H\"{o}lder's inequality, we obtain
\begin{align}\label{2.30a}
\|u\|_{L_\frac{a+1}{d+1}^{p(d+1)}(E)}^{p(d+1)} 
&\leqslant \left(\int_E\frac{1}{|y|^\frac{pp^*(a,b)((a+1)-br(a,b))}{p^*(a,b)-pr(a,b)}}\, dz\right)^\frac{p^*(a,b)-pr(a,b)}{p^*(a,b)} 
\left(\int_E\left|\frac{u}{|y|^\frac{br(a,b)}{d+1}}\right|^\frac{p^*(a,b)(d+1)}{r(a,b)}\, dz\right)^\frac{pr(a,b)}{p^*(a,b)}\nonumber\\
&= M_E^{s(a,b)}|E|^{t(a,b)}
\|u\|_{L_\frac{br(a,b)}{d+1}^\frac{p^*(a,b)(d+1)}{r(a,b)}(E)}^{p(d+1)},
\end{align}
where $|E|$ denotes the measure of the subset $E$, 
$s(a,b)=p(a+1)-b(p^*(a,b)-p)-bp^2/p^*(a,b)$ 
and $t(a,b) = p/p^*(a,b)-p^2/(p^*(a,b)^2)$.

Substituting inequality~\eqref{2.30a} in~\eqref{2.29} 
and using the fact that $(d+1)^{p^2}>1$, we obtain
\begin{align*}
&\|u\|_{L_\frac{b}{d+1}^{p^*(a,b)(d+1)}(E)}^{p(d+1)}
\nonumber\\
&\qquad \leqslant K(N,p,\mu,a,b)c_1(d+1)^{p^2}|\mu|M_E^{s(a,b)}|E|^{t(a,b)}
\max\{\|u\|_{L_\frac{br(a,b)}{d+1}^{\frac{p^*(a,b)(d+1)}{r(a,b)}}(E)}^{p(d+1)},1\} \nonumber \\
&\qquad \qquad +K(N,p,\mu,a,b)c_1(d+1)^{p^2}\|u\|_{L_\frac{bp}{p^*(a,b)}^{p^*(a,b)^2/p}(E)}^{p^*(a,b)-p}
\max\{\|u\|_{L_\frac{br(a,b)}{d+1}^{\frac{p^*(a,b)(d+1)}{r(a,b)}}(E)}^{p(d+1)},1\} \nonumber \\
&\qquad \qquad+(d+1)^{p^2}|\mu| K(N,p,\mu,a,b)M_E^{s(a,b)}|E|^{t(a,b)}
\max\{\|u\|_{L_\frac{br(a,b)}{d+1}^{\frac{p^*(a,b)(d+1)}{r(a,b)}}(E)}^{p(d+1)},1\}, \nonumber \\
\end{align*}
that is,
\begin{align}\label{2.33}
\|u\|_{L_\frac{b}{d+1}^{p^*(a,b)(d+1)}(E)}
&\leqslant \left(A(d+1)^{p}\right)^\frac{1}{d+1}\max\{\|u\|_{L_\frac{br(a,b)}{d+1}^{\frac{p^*(a,b)(d+1)}{r(a,b)}}(E)},1\},
\end{align}
where
\begin{align*}
A^p=A(u)^p&\equiv \max\Bigg\{
\splitfrac{3K(N,p,\mu,a,b)c_1|\mu|M_E^{s(a,b)}|E|^{t(a,b)},}
           {3K(N,p,\mu,a,b)c_1\|u\|_{L_\frac{bp}{p^*(a,b)}^{p^*(a,b)^2/p}(E)}^{p^*(a,b)-p}, 
					3|\mu| K(N,p,\mu,a,b)M_E^{s(a,b)}|E|^{t(a,b)} } \Bigg\},
\end{align*}

Now we use Moser's iteration scheme. Choosing 
$d+1=r(a,b) = r > 1$ in inequality~\eqref{2.33}, 
we get
\begin{align}\label{2.33a}
\|u\|_{L_\frac{b}{r}^{p^*(a,b)r}(E)}
&\leqslant \left(Ar^{p}\right)^\frac{1}{r}\max\{\|u\|_{L_b^{p^*(a,b)}(E)},1\}.
\end{align}
Choosing $d+1=r(a,b)\sp{2} = r\sp{2}$ in inequality~\eqref{2.33} and using inequality~\eqref{2.33a}, we get
\begin{align}\label{2.33b}
\|u\|_{L_\frac{b}{r^2}^{p^*(a,b)r^2}(E)}
&\leqslant A^{\frac{1}{r^2}+\frac{1}{r}}\left(r^p\right)^{\frac{2}{r^2}+\frac{1}{r}}
\max\{\|u\|_{L_b^{p^*(a,b)}(E)}
,1,(Ar^p)^{-\frac{1}{r}}\}.
\end{align}
In general, choosing
$(d+1)=r(a,b)^j = r\sp{j}$ for 
$j\in\mathbb{N}$, we get
\begin{align*}
\|u\|_{L_\frac{b}{r^j}^{p^*(a,b)r^j}(E)} 
&\leqslant \left(A^{\frac{1}{r}+
\cdots +\frac{1}{r^j}}\right)
\left(r^p\right)^{\frac{1}{r}+
\cdots +\frac{j}{r^j}}\\
& \qquad
\times \max \Bigg\{
\splitfrac{\|u\|_{L_b^{p^*(a,b)}(E)},1, }
{ \max\left\{A^{-\frac{1}{r}}\left(r^p\right)^{-\frac{1}{r}},
\dots, A^{-\left(\frac{1}{r}+
\cdots +\frac{1}{r^{j-1}}\right)}
\left(r^p\right)^{-\left(\frac{1}{r}+
\cdots +\frac{j-1}{r^{j-1}}\right)}\right\} }
\Bigg\}\\
&\qquad \leqslant A^{\sigma_i}r^{p\sigma'_i}\max\left\{\|u\|_{L_b^{p^*(a,b)}(E)},1,M_{j-1}\right\},
\end{align*}
where
$M_1 \equiv 1$
and
$M_{j-1} \equiv \max_{1 \leqslant i \leqslant j-1}\left\{A^{-\sigma_i}r^{-p\tau_i} \right\}$
for $j>1$, with
$
\sigma_i 
= \sum_{k=1}^i r^{-k}
$ 
and
$
\tau_i 
= \sum_{k=1}^i kr^{-k}.
$
And since $ r = r(a,b) > 1$, we have
\begin{align*}
M_{j-1} & =
\begin{cases}
A^{-\sigma_{j-1}}r^{-\frac{p}{r}}, 
& \textrm{if $A \leqslant 1$};\\
A^{-\frac{1}{r}}r^{-\frac{p}{r}}, 
&\textrm{if $A>1$}.
\end{cases}
\end{align*}
Passing to the limit as $i \to \infty$, 
it follows that
\begin{alignat*}{3}
\lim_{i\to +\infty} \sigma_i & = \frac{1}{r-1}, 
&\qquad
\lim_{i\to +\infty} \tau_i & = \frac{r}{(r-1)^2}, 
&\quad \textrm{and} \quad
\lim_{i\to +\infty} M_{j-1} \equiv
M=
\begin{cases}
A^{-\frac{1}{r-1}}r^{-\frac{p}{r}}, & \textrm{if $A \leqslant 1$ };\\
A^{-\frac{1}{r}}r^{-\frac{p}{r}}, &\textrm{if $A>1$}.
\end{cases}
\end{alignat*}
Therefore,
\begin{align*}
\|u\|_{L^\infty(E)} 
\leqslant A^\frac{1}{r-1}r^\frac{pr}{(r-1)^2}\max\left\{
\|u\|_{L_b^{p^*(a,b)}(E)},1,M\right\}.
\end{align*}
Finally, since the subset $E \Subset \Omega$ is arbitrary, the proof of the theorem is complete.
\end{proof}
 
\section{A Pohozaev-type identity}
\label{naoexistenciasecao1}

In this section we prove 
Theorem~\ref{teo:naoexistencia} by using 
a Pohozaev-type identity, whose underlying principle is the comparison of two different variations of the energy functional at a critical point.
For more details on these types of identities, see
Ekeland and Ghoussoub~\cite[Remark~2.1]{MR1886088}.

Before we prove our first lemma, however, we note that in the case
$1 < q < p^*(a,b)$, 
if $u \in \mathcal{D}_{a}^{1,p}(\mathbb{R}^N\backslash\{|y|=0\})$,
then
$u \in L_{\textrm{loc}}^q(\mathbb{R}^N\backslash\{|y|=0\})$; consequently, the definition of weak solution makes sense. On the other hand, in the case
\( q > p\sp{*}(a,b) \)
the same conclusion is valid if we suppose additionally that
$u \in 
L\sb{bp^*(a,b)/q, \operatorname{loc}}\sp{q}
(\mathbb{R}^N\backslash\{|y|=0\}) 
\cap
L_{\mathrm{loc}}^\infty(\mathbb{R}^N
\backslash\{|y|=0\})$.

\begin{lema}\label{fprclaim5.1}
Let $\eta$, $u\in C_c^\infty(\mathbb{R}^N)$. 
Then
\begin{align}\label{fpr28}
\int_{\mathbb{R}^N}\frac{|\nabla u|^{p-2}}{|y|^{ap}}\big\langle \nabla u, \nabla(\langle z, \nabla(\eta u)\rangle)\big\rangle \,dz
	+\frac{N-p(a+1)}{p}\int_{\mathbb{R}^N}\eta\frac{|\nabla u|^p}{|y|^{ap}}\,dz
	= B(u,\eta)
\end{align}
where
\begin{align*}
B(u,\eta) &= \int_{\mathbb{R}^N}\frac{|\nabla u|^{p-2}}{|y|^{ap}}u\langle \nabla u, \nabla\eta \rangle \, dz
	+\int_{\mathbb{R}^N}\frac{|\nabla u|^{p-2}}{|y|^{ap}}u \sum_{i,j}^N z_i \frac{\partial u}{\partial z_j} \frac{\partial \eta}{\partial z_j\partial z_i}\,dz\nonumber\\
& \qquad + \int_{\mathbb{R}^N}\frac{|\nabla u|^{p-2}}{|y|^{ap}}\langle \nabla u, \nabla \eta \rangle \langle z, \nabla u \rangle \,dz
	+ \frac{p-1}{p}\int_{\mathbb{R}^N}\frac{|\nabla u|^{p}}{|y|^{ap}} \langle z, \nabla \eta \rangle\, dz.
\end{align*}
\end{lema}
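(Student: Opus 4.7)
The plan is a direct computation: expand the inner bracket by the product rule, reduce the only second-derivative-of-$u$ term to divergence form, and integrate by parts against the weight $|y|^{-ap}$.

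First I would split $\langle z, \nabla(\eta u)\rangle = u\langle z,\nabla\eta\rangle + \eta\langle z,\nabla u\rangle$ and differentiate, using the elementary identity $\nabla\langle z,\nabla f\rangle = \nabla f + (z\cdot\nabla)\nabla f$ (valid because $\partial_j\sum_i z_i\partial_i f = \partial_j f + \sum_i z_i\partial_{ij} f$). Taking the inner product with $\nabla u$ and multiplying by $|\nabla u|^{p-2}/|y|^{ap}$ produces six terms. Four of them, after integration, are exactly the four summands that make up $B(u,\eta)$ with coefficient $1$ (once the factor $(p-1)/p$ appearing in $B$ is accounted for below); a fifth term gives $\int_{\mathbb{R}^N}\eta|\nabla u|^p/|y|^{ap}\,dz$ with coefficient $1$.

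The main step concerns the sixth term,
$$
T \;\equiv\; \int_{\mathbb{R}^N}\frac{\eta}{|y|^{ap}}\,|\nabla u|^{p-2}\langle\nabla u,(z\cdot\nabla)\nabla u\rangle\,dz.
$$
Using the pointwise identity $|\nabla u|^{p-2}\langle\nabla u,\partial_i\nabla u\rangle = \tfrac{1}{p}\partial_i|\nabla u|^p$, one rewrites $T = \tfrac{1}{p}\int_{\mathbb{R}^N}(\eta/|y|^{ap})\langle z,\nabla|\nabla u|^p\rangle\,dz$. Since $\eta u$ has compact support (and the computation is local, away from $\{|y|=0\}$), integration by parts gives
$$
T \;=\; -\frac{1}{p}\int_{\mathbb{R}^N} |\nabla u|^p \operatorname{div}\!\left(\frac{\eta\, z}{|y|^{ap}}\right) dz.
$$
Writing $z=(x,y)$ and using $\nabla|y|^{-ap} = -ap\,|y|^{-ap-2}(0,y)$, so that $\langle z,\nabla|y|^{-ap}\rangle = -ap\,|y|^{-ap}$, one finds
$$
\operatorname{div}\!\left(\frac{\eta\, z}{|y|^{ap}}\right) \;=\; \frac{1}{|y|^{ap}}\Bigl[(N-ap)\eta + \langle z,\nabla\eta\rangle\Bigr],
$$
so that
$$
T \;=\; -\frac{N-ap}{p}\int_{\mathbb{R}^N}\eta\,\frac{|\nabla u|^p}{|y|^{ap}}\,dz \;-\; \frac{1}{p}\int_{\mathbb{R}^N}\frac{|\nabla u|^p}{|y|^{ap}}\,\langle z,\nabla\eta\rangle\,dz.
$$

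Collecting the six contributions, the coefficient of $\int\eta|\nabla u|^p/|y|^{ap}\,dz$ is $1-(N-ap)/p = -(N-p(a+1))/p$, which moves to the left-hand side and yields the second term of~\eqref{fpr28}; the coefficient of $\int(|\nabla u|^p/|y|^{ap})\langle z,\nabla\eta\rangle\,dz$ becomes $1-1/p = (p-1)/p$, matching the last summand of $B(u,\eta)$; and the three remaining terms are precisely the first three summands of $B(u,\eta)$. This produces~\eqref{fpr28}. The only delicate point is the divergence computation for the cylindrical weight, which is exactly what generates the critical coefficient $N-p(a+1)$; everything else is bookkeeping.
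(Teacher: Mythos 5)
Your proposal is correct and follows essentially the same route as the paper: expand $\big\langle \nabla u, \nabla(\langle z, \nabla(\eta u)\rangle)\big\rangle$ into six terms, rewrite the single term containing second derivatives of $u$ via $|\nabla u|^{p-2}\langle\nabla u,\partial_i\nabla u\rangle=\tfrac1p\partial_i|\nabla u|^p$, and integrate by parts against the weight, where $\operatorname{div}(\eta z/|y|^{ap})$ produces the coefficients $N-ap$ and $\langle z,\nabla\eta\rangle$. The final bookkeeping ($1+a-N/p=-(N-p(a+1))/p$ and $1-1/p=(p-1)/p$) matches the paper's equations exactly.
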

\begin{proof}
To prove equality~\eqref{fpr28} 
we expand the expression 
$\big\langle \nabla u, \nabla(\langle z, \nabla(\eta u)\rangle)\big\rangle$. 
In this way, we obtain
\begin{align}\label{fpr29}
&\int_{\mathbb{R}^N}\frac{|\nabla u|^{p-2}}{|y|^{ap}}\big\langle \nabla u, \nabla(\langle z, \nabla(\eta u)\rangle)\big\rangle \,dz\nonumber\\
& \qquad= \int_{\mathbb{R}^N}\eta\frac{|\nabla u|^{p}}{|y|^{ap}}\, dz 
	+ \int_{\mathbb{R}^N}\eta\frac{|\nabla u|^{p-2}}{|y|^{ap}}\sum_{i,j=1}^N z_i\frac{\partial u}{\partial z_j}\frac{\partial^2 u}{\partial z_j\partial z_i}\,dz		
	\nonumber\\
&	\qquad \qquad + \int_{\mathbb{R}^N}\frac{|\nabla u|^{p-2}}{|y|^{ap}}u\langle \nabla u, \nabla\eta \rangle\, dz
+ \int_{\mathbb{R}^N}\frac{|\nabla u|^{p-2}}{|y|^{ap}}u\sum_{i,j=1}^N z_i\frac{\partial u}{\partial z_j}\frac{\partial^2 \eta}{\partial z_j\partial z_i}\,dz
	\nonumber\\
& \qquad \qquad + \int_{\mathbb{R}^N}\frac{|\nabla u|^{p-2}}{|y|^{ap}}\langle \nabla u, \nabla \eta\rangle \langle z, \nabla u\rangle\,dz + \int_{\mathbb{R}^N}\frac{|\nabla u|^{p}}{|y|^{ap}}\langle z, \nabla \eta \rangle \, dz.
\end{align}

Now we expand the second term on the right-hand side of inequality~\eqref{fpr29}.
Using the divergence theorem and recalling that $\eta \in C_0^\infty$ we obtain
\begin{align}\label{fpr30}
\int_{\mathbb{R}^N}\eta \frac{|\nabla u|^{p-2}}{|y|^{ap}}
\sum_{i,j=1}^N z_i\frac{\partial u}{\partial z_j}\frac{\partial^2 u}{\partial z_i\partial z_j}\,dz
& =\int_{\mathbb{R}^N}\frac{\eta}{|y|^{ap}} \sum_{i=1}^N z_i \frac{\partial}{\partial z_i}\left(\frac{|\nabla u|^p}{p}\right) \,dz\nonumber\\
& = - \frac{1}{p}\int_{\mathbb{R}^N}|\nabla u|^p \sum_{i=1}^N\frac{\partial}{\partial z_i}\left(\frac{\eta}{|y|^{ap}}z_i\right) \,dz\nonumber\\
& = - \frac{1}{p}\int_{\mathbb{R}^N}\frac{|\nabla u|^p}{|y|^{ap}}\langle\nabla\eta,z\rangle \,dz
 + \left(a- \frac{N}{p}\right)\int_{\mathbb{R}^N}\eta\frac{|\nabla u|^p}{|y|^{ap}} \,dz.
\end{align}
Substituing the equalities~\eqref{fpr29} and~\eqref{fpr30} on the left-hand side of equality~\eqref{fpr28}
 we conclude the proof of the lemma.
\end{proof}

\begin{lema}\label{fprclaim5.2}
If  $u \in \mathcal{D}_a^{1,p}(\mathbb{R}^N\backslash\{|y|=0\}) \cap C^1(\mathbb{R}^N\backslash\{|y|=0\}) \cap W^{2,1}_\mathrm{loc}(\mathbb{R}^N\backslash\{|y|=0\})$, then the identity~\eqref{fpr28} is valid.
\end{lema}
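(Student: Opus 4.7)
The plan is to derive the identity for $u$ from Lemma~\ref{fprclaim5.1} by a two-stage approximation: first truncate $u$ away from the cylindrical singularity and from infinity, then mollify the truncated function into $C_c^\infty(\mathbb{R}^N)$, apply Lemma~\ref{fprclaim5.1}, and finally pass to the limit in each step. A direct convolution mollification of $u$ alone will not produce admissible approximations, because $u$ is in general singular on $\{|y|=0\}$; it is for this reason that the cylindrical cutoff must be introduced before smoothing.

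Fix $\eta \in C_c^\infty(\mathbb{R}^N)$ and introduce cutoff families $\phi_\delta \in C^\infty(\mathbb{R}^N)$ with $\phi_\delta \equiv 0$ for $|y|\leqslant\delta$, $\phi_\delta \equiv 1$ for $|y|\geqslant 2\delta$, $|\nabla\phi_\delta|\leqslant C\delta^{-1}$, $|D^2\phi_\delta|\leqslant C\delta^{-2}$; and $\psi_R \in C_c^\infty(\mathbb{R}^N)$ with $\psi_R \equiv 1$ on $B_R(0)$ and $\operatorname{supp}\psi_R \subset B_{2R}(0)$. Set $w_{R,\delta} \equiv \phi_\delta\psi_R u$; thanks to the regularity hypothesis on $u$, this function is compactly supported in $\mathbb{R}^N\setminus\{|y|=0\}$ and lies in $C^1 \cap W^{2,1}$. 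I would then apply Lemma~\ref{fprclaim5.1} to the standard mollification $u_n \equiv \rho_{1/n}\ast w_{R,\delta}$, which belongs to $C_c^\infty(\mathbb{R}^N)$ for $n$ large. Passing $n\to\infty$ first: the weight $|y|^{-ap}$ is bounded on $\operatorname{supp} w_{R,\delta}$ by a constant depending only on $\delta$, while $u_n \to w_{R,\delta}$ in $C^1$ and $D^2 u_n \to D^2 w_{R,\delta}$ in $L^1$, so dominated convergence yields the identity with $w_{R,\delta}$ in place of $u$. The limit $R\to\infty$ is immediate, because once $R$ exceeds the radius of $\operatorname{supp}\eta$ the cutoff $\psi_R$ equals $1$ throughout the integration domain, producing the identity for $w_\delta \equiv \phi_\delta u$.

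The main obstacle is the limit $\delta\to 0^+$. Each integral in the identity for $w_\delta$ must be compared with its counterpart for $u$; the discrepancies are supported in the annular shell $A_\delta \equiv \{\delta\leqslant|y|\leqslant 2\delta\}$ and carry one or two derivatives of $\phi_\delta$. Using $|\nabla w_\delta|\leqslant|\nabla u|+C|u|/|y|$ and $|D^2 w_\delta|\leqslant|D^2 u|+C|\nabla u|/|y|+C|u|/|y|^2$ on $A_\delta$ (where $|y|\sim\delta$), combined with Maz'ya's inequality~\eqref{eq:maz} to control $|u|/|y|^{a+1}$ in $L^p$, each error integral is bounded via H\"older's inequality by a product of powers of $\int_{A_\delta}|\nabla u|^p/|y|^{ap}\,dz$ and $\int_{A_\delta}|u|^p/|y|^{p(a+1)}\,dz$. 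Since $u \in \mathcal{D}_a^{1,p}(\mathbb{R}^N\setminus\{|y|=0\})$ both integrands are globally integrable, and the absolute continuity of the Lebesgue integral together with $|A_\delta|\to 0$ forces every such error to vanish. This recovers~\eqref{fpr28} for the original $u$ and concludes the argument.
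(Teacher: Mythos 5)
Your overall strategy is the same as the paper's: the paper's entire proof of this lemma is the assertion that some $\varphi_n\in C_c^\infty(\mathbb{R}^N\backslash\{|y|=0\})$ converge to $u$ in $C^1_{\mathrm{loc}}\cap W^{2,1}_{\mathrm{loc}}$ and that one may pass to the limit in~\eqref{fpr28}. Your cutoff-plus-mollification construction is precisely how such a sequence is built, and you correctly locate the real difficulty in the limit $\delta\to0^+$ near $\{|y|=0\}$, a point the paper does not address (convergence that is only local in $\mathbb{R}^N\backslash\{|y|=0\}$ does not by itself justify passing to the limit in integrals whose domain meets the singular set).

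There is, however, a gap in your $\delta\to0^+$ error analysis: it does not close for the terms in which two derivatives fall on the approximant $w_\delta=\phi_\delta u$. Inside $\nabla(\langle z,\nabla(\eta w_\delta)\rangle)$ on the left-hand side of~\eqref{fpr28} there appears the contribution $\eta\,u\,z_i\,\partial^2_{ij}\phi_\delta$ paired with $|\nabla w_\delta|^{p-2}\partial_j w_\delta\,|y|^{-ap}$. Since $|D^2\phi_\delta|\sim|y|^{-2}$ on $A_\delta$, this error has size $|\nabla w_\delta|^{p-1}|u|\,|y|^{-ap-2}$; writing it as $\bigl(|\nabla w_\delta|^{p-1}|y|^{-a(p-1)}\bigr)\cdot\bigl(|u|\,|y|^{-(a+1)}\bigr)\cdot|y|^{-1}$ and applying H\"{o}lder as you propose leaves an uncompensated factor $\delta^{-1}$ against quantities that tend to zero with no rate, so the bound is \emph{not} a product of powers of $\int_{A_\delta}|\nabla u|^p/|y|^{ap}\,dz$ and $\int_{A_\delta}|u|^p/|y|^{p(a+1)}\,dz$, and there is no Hardy--Maz'ya control of $|u|/|y|^{a+2}$ in $L^p$ under the standing assumption $a<(k-p)/p$. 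The same defect affects the term $\eta\,|\nabla w_\delta|^{p-2}\sum z_i\partial_j w_\delta\,\partial^2_{ij}w_\delta\,|y|^{-ap}$, whose cross terms carry $\nabla\phi_\delta\otimes\nabla u$ and whose $D^2u$ part need not be integrable against $|y|^{-ap}$ up to $\{|y|=0\}$. The standard repair is to never let two derivatives hit the approximant: perform the integration by parts of~\eqref{fpr30} already at the level of the smooth truncations $w_{R,\delta}$, so that the identity being carried to the limit involves only $|\nabla w_{R,\delta}|^p$ and $|\nabla w_{R,\delta}|^{p-2}\nabla w_{R,\delta}\otimes\nabla w_{R,\delta}$ tested against $\eta$, $\nabla\eta$ and $z\otimes D^2\eta$; then every error term on $A_\delta$ carries at most one factor $|\nabla\phi_\delta|\lesssim|y|^{-1}$ against one factor of $u$, and your Maz'ya-plus-absolute-continuity argument does apply. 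Alternatively, one selects a sequence $\delta_j\to0$ along which the surface fluxes on $\{|y|=\delta_j\}$ vanish, using $\int_{\mathbb{R}^N}|\nabla u|^p|y|^{-ap}\,dz<\infty$.
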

\begin{proof}
By a density argument, there exists a sequence 
$(\varphi_n)_{n\in\mathbb{N}} \in C_c^\infty(\mathbb{R}^N\backslash\{|y|=0\})$ 
such that
$\lim_{n \to \infty} \varphi_n = u$ in $C_\textrm{loc}^1(\mathbb{R}^N\backslash\{|y|=0\}) \cap W^{2,1}_\mathrm{loc}(\mathbb{R}^N\backslash\{|y|=0\})$. Applying Lemma~\ref{fprclaim5.1} 
to the functions $\eta$ and $\varphi_n$,
and passing to the limit as $n \to \infty$ in equality~\eqref{fpr28}
we conclude the proof of the lemma.
\end{proof}

\section{Conclusion of the proof of 
Theorem~\ref{teo:naoexistencia}}
\label{naoexistenciasecao2}

Now we can use the Pohozaev-type identity~\eqref{fpr28} to show that there exists no nontrivial solution to 
problem~\eqref{fprteorema3}. 

\begin{lema}\label{fprclaim5.3}
Let $f \in C^0((\mathbb{R}^N\backslash\{|y|=0\})\times \mathbb{R})$ 
and let 
$u \in \mathcal{D}_a^{1,p}(\mathbb{R}^N\backslash\{|y|=0\}) \cap C^1(\mathbb{R}^N\backslash\{|y|=0\}) \cap W^{2,1}_\mathrm{loc}(\mathbb{R}^N\backslash\{|y|=0\})$ be a solution to problem
\begin{align}\label{fpr31}
-\operatorname{div} \left[\frac{|\nabla \xi |^{p-2}}{|y|^{ap}}\nabla \xi \right] = f(z,\xi ) \quad  z\in\mathbb{R}^N.
\end{align}
Suppose that the function 
$F(z,\xi ) \equiv \displaystyle \int^{\xi }_{0}f(z,v)\, dv$ 
is such that
$F \in C^1((\mathbb{R}^N\backslash\{|y|=0\})\times \mathbb{R})$; suppose also that 
$\xi f(\cdot,\xi )$, $F(\cdot,\xi )$, $\sum_{i=1}^Nz_i\dfrac{\partial F}{\partial z_i}(\cdot,\xi ) \in L_a^1(\mathbb{R}^N)$. 
Then
\begin{align}\label{fpr32}
\int_{\mathbb{R}^N} \left[ \frac{N-p(a+1)}{p}\xi f(z,\xi ) - NF(z,\xi ) - \sum_{i=1}^Nz_i\frac{\partial F}{\partial z_i}(z,\xi )\right] \,dz = 0
\end{align}
\end{lema}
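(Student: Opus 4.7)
The plan is to test the equation~\eqref{fpr31} against a cutoff version of the dilation field $\langle z,\nabla\xi\rangle$ and then send the cutoff to $1$. Fix $\eta_R\in C_c^\infty(\mathbb{R}^N)$ with $\eta_R\equiv 1$ on $B_R(0)$, $\operatorname{supp}\eta_R\subset B_{2R}(0)$, $|\nabla\eta_R|\leq C/R$, $|\nabla^2\eta_R|\leq C/R^2$, so that $\langle z,\nabla(\eta_R\xi)\rangle\in \mathcal{D}_a^{1,p}(\mathbb{R}^N\backslash\{|y|=0\})$ is admissible. Using this test function in~\eqref{fpr31} and invoking Lemma~\ref{fprclaim5.2} on the left-hand side yields
\[
B(\xi,\eta_R) - \frac{N-p(a+1)}{p}\int_{\mathbb{R}^N}\eta_R\frac{|\nabla\xi|^p}{|y|^{ap}}\,dz = \int_{\mathbb{R}^N} f(z,\xi)\,\langle z,\nabla(\eta_R\xi)\rangle\,dz.
\]
The weak formulation with test $\eta_R\xi$ gives the weighted energy identity
\[
\int_{\mathbb{R}^N}\eta_R\frac{|\nabla\xi|^p}{|y|^{ap}}\,dz = \int_{\mathbb{R}^N}\eta_R\,\xi f(z,\xi)\,dz - \int_{\mathbb{R}^N}\frac{|\nabla\xi|^{p-2}}{|y|^{ap}}\,\xi\,\langle\nabla\xi,\nabla\eta_R\rangle\,dz,
\]
which I insert in the preceding equation to remove the bulk Dirichlet integral.

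Next I would process the right-hand side via $\langle z,\nabla(\eta_R\xi)\rangle=\xi\langle z,\nabla\eta_R\rangle+\eta_R\langle z,\nabla\xi\rangle$. The chain rule $f(z,\xi)\partial_{z_i}\xi=\partial_{z_i}[F(z,\xi(z))]-(\partial_{z_i}F)(z,\xi)$ together with the divergence relation $\langle z,\nabla G\rangle=\operatorname{div}(zG)-NG$ (applicable because $\eta_R$ is compactly supported) rewrites the $\eta_R\langle z,\nabla\xi\rangle$ piece as
\[
\int \eta_R f(z,\xi)\langle z,\nabla\xi\rangle\,dz = -N\!\int \eta_R F\,dz - \!\int F\langle z,\nabla\eta_R\rangle\,dz - \!\int \eta_R\!\sum_{i=1}^N z_i(\partial_{z_i}F)(z,\xi)\,dz.
\]
Collecting every term and rearranging gives, for each $R$,
\[
\frac{N-p(a+1)}{p}\!\int\!\eta_R\,\xi f\,dz - N\!\int\!\eta_R F\,dz - \!\int\!\eta_R\!\sum_{i=1}^N z_i\partial_{z_i}F\,dz = \mathcal{R}(R),
\]
where the remainder $\mathcal{R}(R)$ collects exactly those terms that carry a factor $\nabla\eta_R$ or $\nabla^2\eta_R$, hence is supported in the annulus $A_R=B_{2R}\setminus B_R$.

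Passing $R\to\infty$, the left-hand side converges to the integral in~\eqref{fpr32} by Lebesgue dominated convergence against the $L^1_a$ majorants $\xi f(\cdot,\xi)$, $F(\cdot,\xi)$ and $\sum_i z_i(\partial_{z_i}F)(\cdot,\xi)$ provided by the hypotheses; so the proof reduces to showing $\mathcal{R}(R)\to 0$. Contributions of $\mathcal{R}(R)$ involving only $F$ or $\xi f$ paired with $\langle z,\nabla\eta_R\rangle$ vanish via the uniform bound $|z||\nabla\eta_R|\leq C$ and absolute continuity of $L^1_a$ integrals on $A_R$. The contributions from $B(\xi,\eta_R)$ and from the residual energy-identity term $\int(|\nabla\xi|^{p-2}/|y|^{ap})\xi\langle\nabla\xi,\nabla\eta_R\rangle\,dz$ are handled by H\"older's inequality against the finite global Dirichlet integral $\int|\nabla\xi|^p/|y|^{ap}\,dz<\infty$. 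I anticipate that the main obstacle will be the $\nabla^2\eta_R$-term $\int(|\nabla\xi|^{p-2}/|y|^{ap})\,\xi\sum z_i\partial_j\xi\,\partial^2_{ij}\eta_R\,dz$: on $A_R$ one has the pointwise bound $|z||\nabla^2\eta_R|\leq C/R$, which is only barely enough. The closing trick is to write $1/|y|^{ap}=|y|/(|y|^{a(p-1)}|y|^{a+1})$, absorbing the extra $|y|\leq 2R$ against the factor $|z||\nabla^2\eta_R|\leq C/R$; H\"older with conjugate exponents $p/(p-1)$ and $p$ then yields the bound
\[
C\left(\int_{A_R}\!\frac{|\nabla\xi|^p}{|y|^{ap}}\,dz\right)^{(p-1)/p}\!\left(\int_{A_R}\!\frac{|\xi|^p}{|y|^{p(a+1)}}\,dz\right)^{1/p},
\]
where the second factor is finite globally by Maz'ya's inequality~\eqref{eq:maz}, and both tend to $0$ by absolute continuity. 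Once $\mathcal{R}(R)\to 0$ is verified, identity~\eqref{fpr32} follows.
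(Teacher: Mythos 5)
Your argument is correct and follows essentially the same route as the paper's: test the equation against the cut-off dilation field $\langle z,\nabla(\eta\xi)\rangle$, combine the resulting identity with the Rellich--Pohozaev formula of Lemma~\ref{fprclaim5.2} and with the energy identity obtained from the test function $\eta\xi$, and then show that every term carrying $\nabla\eta$ or $\nabla^{2}\eta$ vanishes in the limit by pairing $|z||\nabla\eta|\leqslant C$ (resp.\ $\|\nabla\eta\|_{L^N}\leqslant C$) with the absolute continuity of the globally finite integrals. The only harmless deviations are that you truncate with a single annulus escaping to infinity while the paper's cut-off $\eta_\epsilon$ also excises a shrinking neighbourhood of the origin, and that you close the Hessian term via the Hardy-type pairing $\|\nabla\xi\|_{L^{p}_{a}(A_R)}^{p-1}\,\|\xi/|y|^{a+1}\|_{L^{p}(A_R)}$ instead of the paper's H\"older triple $\|\nabla\xi\|_{L^{p}_{a}}^{p-1}\,\|\xi\|_{L^{p^*}_{a}}\,\bigl\||z|\nabla^{2}\eta\bigr\|_{L^{N}}$; both estimates are valid.
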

\begin{proof}
Let us consider the test function $\eta \in C_c^\infty(\mathbb{R}^N\backslash\{|y|=0\})$; 
let us also consider the sequence
$(\varphi_n)_{n\in\mathbb{N}} \in C_c^\infty(\mathbb{R}^N\backslash\{|y|=0\})$ 
such that 
$\lim_{n \to \infty} \varphi_n = \xi $ 
in $C_\textrm{loc}^1(\mathbb{R}^N\backslash\{|y|=0\}) \cap W^{2,1}_\mathrm{loc}(\mathbb{R}^N\backslash\{|y|=0\})$. 

Multiplying both sides of equation~\eqref{fpr31}
by $\langle z, \nabla(\eta \varphi_n) \rangle$ 
and using the divergence theorem, we obtain
\begin{align}\label{fpr33}
&\int_{\mathbb{R}^N}\frac{|\nabla \xi |^{p-2}}{|y|^{ap}} \big\langle \nabla \xi , \nabla \langle z,\nabla(\eta \xi ) \rangle \big\rangle \,dz
= \lim_{n \to +\infty} \int_{\mathbb{R}^N}\frac{|\nabla \xi |^{p-2}}{|y|^{ap}}
	\big\langle \nabla \xi , \nabla \langle z,\nabla(\eta \varphi_n) \rangle \big\rangle \,dz\nonumber\\
& \qquad = \lim_{n \to +\infty} \int_{\mathbb{R}^N}f(z,\xi ) \langle z,\nabla(\eta \varphi_n) \rangle  \,dz
= \int_{\mathbb{R}^N}f(z,\xi ) \langle z,\nabla(\eta \xi ) \rangle  \,dz\nonumber\\
& \qquad = \int_{\mathbb{R}^N}\xi f(z,\xi ) \langle z,\nabla\eta \rangle  \,dz
	+ \int_{\mathbb{R}^N}\eta \sum_{i=1}^Nz_i\frac{\partial F(z,\xi )}{\partial z_i}  \,dz
 - \int_{\mathbb{R}^N}\eta \sum_{i=1}^Nz_i\frac{\partial F}{\partial z_i}(z,\xi )\, dz\nonumber\\
& \qquad = \int_{\mathbb{R}^N}\xi f(z,\xi ) \langle z,\nabla\eta \rangle  \,dz
	- \int_{\mathbb{R}^N} \sum_{i=1}^N\frac{\partial (\eta z_i)}{\partial z_i}F(z,\xi )  \,dz
 - \int_{\mathbb{R}^N}\eta \sum_{i=1}^Nz_i\frac{\partial F}{\partial z_i}(z,\xi )\, dz.
\end{align}

On the other hand, multiplying both sides of equation~\eqref{fpr31} by the test function $\eta\varphi_n$, we obtain
\begin{align*}
\int_{\mathbb{R}^N}\frac{|\nabla \xi |^{p-2}}{|y|^{ap}} \langle \nabla \xi , \nabla (\eta \xi ) \rangle \,dz
& = \lim_{n \to +\infty} \int_{\mathbb{R}^N}\frac{|\nabla \xi |^{p-2}}{|y|^{ap}} \langle \nabla \xi , \nabla (\eta \varphi_n) \rangle \,dz\\
& = \lim_{n \to +\infty} \int_{\mathbb{R}^N}f(z,\xi ) \eta \varphi_n \,dz\\
& = \int_{\mathbb{R}^N}f(z,\xi ) \eta \xi  \,dz.
\end{align*}
Expanding the left-hand side of the previous equality, it follows that
\begin{align}\label{fpr34}
\int_{\mathbb{R}^N}\eta\frac{|\nabla \xi |^{p}}{|y|^{ap}} \,dz
= \int_{\mathbb{R}^N}f(z,\xi ) \eta \xi  \,dz
	-\int_{\mathbb{R}^N}\xi \frac{|\nabla \xi |^{p-2}}{|y|^{ap}} \langle \nabla \xi , \nabla \eta \rangle \,dz.
\end{align}
Substituting equalities~\eqref{fpr33} and~\eqref{fpr34} in equality~\eqref{fpr28}, we find that 
\begin{align}\label{fpr34-2}
&\int_{\mathbb{R}^N}\xi f(z,\xi ) \langle z,\nabla\eta \rangle  \,dz
	- \int_{\mathbb{R}^N} \sum_{i=1}^N\frac{\partial (\eta z_i)}{\partial z_i}F(z,\xi )  \,dz
	- \int_{\mathbb{R}^N}\eta \sum_{i=1}^Nz_i\frac{\partial F}{\partial z_i}(z,\xi )\, dz\nonumber\\
&\qquad \qquad +\frac{N-p(a+1)}{p}\int_{\mathbb{R}^N}f(z,\xi ) \eta \xi  \,dz
	-\frac{N-p(a+1)}{p}\int_{\mathbb{R}^N}\xi \frac{|\nabla \xi |^{p-2}}{|y|^{ap}} \langle \nabla \xi , \nabla \eta \rangle \,dz\nonumber\\
&\qquad = \int_{\mathbb{R}^N}\frac{|\nabla \xi |^{p-2}}{|y|^{ap}}\xi \langle \nabla \xi , \nabla\eta \rangle \, dz
	+\int_{\mathbb{R}^N}\frac{|\nabla \xi |^{p-2}}{|y|^{ap}}\xi  \sum_{i,j}^N z_i \frac{\partial \xi }{\partial z_j} 
	\frac{\partial^2 \eta}{\partial z_j\partial z_i}\,dz\nonumber\\
& \qquad \qquad + \int_{\mathbb{R}^N}\frac{|\nabla \xi |^{p-2}}{|y|^{ap}}\langle \nabla \xi , \nabla \eta \rangle \langle z, \nabla \xi  \rangle \,dz
	+ \frac{p-1}{p}\int_{\mathbb{R}^N}\frac{|\nabla \xi |^{p}}{|y|^{ap}} \langle z, \nabla \eta \rangle\, dz.
\end{align}

Now we expand the second term on the left-hand side of the previous equality, and we obtain
\begin{align*}
\int_{\mathbb{R}^N} \sum_{i=1}^N\frac{\partial (\eta z_i)}{\partial z_i}F(z,\xi )  \,dz
& = \int_{\mathbb{R}^N} \langle \nabla\eta, z \rangle F(z,\xi )  \,dz
	+ \int_{\mathbb{R}^N}N\eta F(z,\xi )  \,dz.
\end{align*}
Substituting the previous equality in equality~\eqref{fpr34-2}
and reordering the terms, it follows that
\begin{align}\label{fpr34-3}
&	\frac{N-p(a+1)}{p}\int_{\mathbb{R}^N}f(z,\xi ) \eta \xi  \,dz
	- \int_{\mathbb{R}^N}N\eta F(z,\xi )  \,dz
	- \int_{\mathbb{R}^N}\eta \sum_{i=1}^Nz_i\frac{\partial F}{\partial z_i}(z,\xi )\, dz\nonumber\\
& \qquad = \frac{N-ap}{p}\int_{\mathbb{R}^N}\xi \frac{|\nabla \xi |^{p-2}}{|y|^{ap}} \langle \nabla \xi , \nabla \eta \rangle \,dz
	+\int_{\mathbb{R}^N}\frac{|\nabla \xi |^{p-2}}{|y|^{ap}}\xi  \sum_{i,j}^N z_i \frac{\partial \xi }{\partial z_j} 
	\frac{\partial^2 \eta}{\partial z_j\partial z_i}\,dz\nonumber\\
& \qquad \qquad + \int_{\mathbb{R}^N}\frac{|\nabla \xi |^{p-2}}{|y|^{ap}}\langle \nabla \xi , \nabla \eta \rangle \langle z, \nabla \xi  \rangle \,dz	
	+ \frac{p-1}{p}\int_{\mathbb{R}^N}\frac{|\nabla \xi |^{p}}{|y|^{ap}} \langle z, \nabla \eta \rangle\, dz\nonumber\\
& \qquad \qquad -\int_{\mathbb{R}^N}\xi f(z,\xi ) \langle z,\nabla\eta \rangle  \,dz
	+ \int_{\mathbb{R}^N} \langle \nabla\eta, z \rangle F(z,\xi )  \,dz.
\end{align}

Now we must estimate each one of the terms on the right-hand side of equality~\eqref{fpr34-3}. For the first and second terms, using H\"{o}lder's inequality we obtain
\begin{align*}
\left|\frac{N-ap}{p}\int_{\mathbb{R}^N}\xi \frac{|\nabla \xi |^{p-2}}{|y|^{ap}} \langle \nabla \xi , \nabla \eta \rangle \,dz\right|
\leqslant \frac{N-ap}{p}\|\nabla \xi \|_{L_a^p(\operatorname{supp}|\nabla \eta|)}^{p-1}\|\xi \|_{L_a^{p^*}(\mathbb{R}^N)}\|\nabla \eta\|_{L^N(\mathbb{R}^N)}
\end{align*}
and
\begin{align*}
&\int_{\mathbb{R}^N}\frac{|\nabla \xi |^{p-2}}{|y|^{ap}}\xi  \sum_{i,j}^N z_i \frac{\partial \xi }{\partial z_j} 
	\frac{\partial^2 \eta}{\partial z_j\partial z_i}\,dz 
\leqslant c\|\nabla \xi \|_{L_a^p(\operatorname{supp}|\nabla \eta|)}^{p-1}\|\xi \|_{L_a^{p^*}(\mathbb{R}^N)}
	\left\||z|\sum_{i=1}^{N}\left|\nabla\left(\frac{\partial\eta}{\partial z_i}\right)\right|\right\|_{L^N(\mathbb{R}^N)}.
\end{align*}
For the third and fourth terms, we find
\begin{align*}
\left|\int_{\mathbb{R}^N}\frac{|\nabla \xi |^{p-2}}{|y|^{ap}} \langle \nabla \xi , \nabla \eta \rangle
	\langle \nabla z, \nabla \xi  \rangle \,dz\right|
& \leqslant \big\||\nabla \eta||z|\big\|_\infty\|\nabla \xi \|_{L_a^p(\operatorname{supp}|\nabla \eta|)}
\end{align*}
and
\begin{align*}
\left|\frac{p-1}{p}\int_{\mathbb{R}^N}\frac{|\nabla \xi |^{p}}{|y|^{ap}} \langle z, \nabla \eta \rangle \,dz\right|
& \leqslant \frac{p-1}{p}\big\||\nabla \eta||z|\big\|_\infty\|\nabla \xi \|_{L_a^p(\operatorname{supp}|\nabla \eta|)}.
\end{align*}
Finally, for the fifth and sixth terms, we find
\begin{align*}
\left|\int_{\mathbb{R}^N} \langle \nabla\eta, z \rangle \left(F(z,\xi )-\xi f(z,\xi )\right)  \,dz\right|
\leqslant \big\||\nabla \eta||z|\big\|_\infty \int_{\operatorname{supp}|\nabla\eta|}|F(z,\xi )-\xi f(z,\xi )| \,dz.
\end{align*}

Our objective now is to study the asymptotic behavior of the bounds from above of these several inequalities. To do this we choose an appropriate cut off function $\eta$. 
Consider the function 
$h \in C^\infty(\mathbb{R})$ such that
 $h\big|_{\{t\leqslant 1\}} \equiv 0$, $h\big|_{\{t\geqslant 2\}} \equiv 1$ and $0 \leqslant h \leqslant 1$. 
Given $\epsilon > 0$ small enough, we define $\eta_\epsilon(z) = h(|z|/\epsilon)$ 
if $|z|\leqslant 3\epsilon$, 
$\eta_\epsilon(z) = h(1/(\epsilon|z|))$ 
if $|z| \geqslant (2\epsilon)^{-1}$,
and $\eta_\epsilon(z) = 1$
otherwise.
We also choose the function $h$ such that
$|h'(z)|\leqslant 2$; with these choices, we have $\eta_\epsilon \in C_c^\infty(\mathbb{R}\backslash\{0\})$. 
Using $\eta = \eta_\epsilon$, we show that the bounds from above vanish as $\epsilon \to 0$.

In the first place, we estimate the term $|\nabla\eta|$. Since the function $\eta$ is radial, we use the notation 
$|z|=r$. Hence, for $r<2\epsilon$ we obtain
\begin{align*}
\eta_\epsilon(r) = h\left(\frac{r}{\epsilon}\right)
\quad \textrm{and} \quad
|\eta'_\epsilon(r)| \leqslant \left|h'\left(\frac{r}{\epsilon}\right)\right|\frac{1}{\epsilon} \leqslant \frac{2}{\epsilon}.
\end{align*}
In a similar way, if $r>1/(2\epsilon)$, 
then it follows that
\begin{align*}
\eta_\epsilon(r) = h\left(\frac{1}{\epsilon r}\right)
\quad \textrm{and} \quad
|\eta'_\epsilon(r)| \leqslant \left|h'\left(\frac{1}{\epsilon r}\right)\right|\frac{1}{\epsilon}\frac{1}{r^2} \leqslant \frac{2}{\epsilon}\frac{1}{r^2}.
\end{align*}

Now we estimate the term
$\|\nabla \eta\|_{L^N(\mathbb{R}^N)}^N$. Using the properties of the cut off function $\eta$, we obtain
\begin{align*}
\|\nabla\eta\|_{L^N(\mathbb{R}^N)}^N 
&= \int_{\epsilon<r<2\epsilon}\int_{\partial B_r}|\nabla\eta|^N \,d\nu dr
	+ \int_{\frac{1}{2\epsilon}<r<\frac{1}{\epsilon}}\int_{\partial B_r}|\nabla\eta|^N \,d\nu dr\\
& \leqslant \int_{\epsilon<r<2\epsilon} \frac{2^N}{\epsilon^N}r^{N-1} \,dr 
	+ \int_{\frac{1}{2\epsilon}<r<\frac{1}{\epsilon}}\frac{2^N}{\epsilon^N}\frac{1}{r^{2N}}r^{N-1}\,dr\\
&= \frac{2^{2+1}}{N}(2^N-1).
\end{align*}
Therefore, $\|\nabla \eta\|_{L^N(\mathbb{R}^N)}^N$ 
is finite and does not depend on $\epsilon$.

Moreover, we have
\begin{align*}
\left\||z|\sum_{i=1}^{N}\left|\nabla\left(\frac{\partial\eta}{\partial z_i}\right)\right|\right\|_{L^N(\mathbb{R}^N)} 
& \leqslant c\int_{\mathbb{R}^N}\left(\sum_{i=1}^{N}\left|\nabla\left(\frac{\partial\eta}{\partial z_i}\right)\right|\right)^N\, dz.
\end{align*}
By the properties of the cut off function $\eta$ this term is finite, as well as the term $\|\left|\nabla \eta\right| |z|\|_\infty$.

Following up, 
we show that $\|\nabla \xi \|_{L_a^p(\operatorname{supp}|\nabla\eta|)}^{p-1} \to 0$ as $\epsilon \to 0$. 
Since
$\|\nabla \xi \|_{L_a^p(\mathbb{R}^N)} < +\infty$, we consider balls centered at the origin with radii 
$0 < r_\epsilon < R_\epsilon$ such that $\operatorname{supp}|\nabla\eta|\subset B_{r_\epsilon}(0) \cup \left(\mathbb{R}^N\backslash B_{R_\epsilon}(0)\right)$; for example, we can take 
\( r\sb{\epsilon} = 2\epsilon \) and 
\( R\sb{\epsilon} = 1/2\epsilon \). 
As $\epsilon \to 0$ we obtain $r_\epsilon \to 0$ and $R_\epsilon\to +\infty$; hence,
$$
\int_{\operatorname{supp}|\nabla\eta|} \frac{|\nabla \xi |}{|y|^{ap}} \, dz
\leqslant \int_{B_{r_\epsilon}(0) \cup \left(\mathbb{R}^N\backslash B_{R_\epsilon}(0)\right)} \frac{|\nabla \xi |}{|y|^{ap}} \, dz
\to 0.
$$
Using this same argument, we can show that
$$
\int_{\operatorname{supp}|\nabla\eta|}|F(z,\xi )-\xi f(z,\xi )| \,dz \to 0 
$$ 
as $\epsilon \to 0$.
As a result, we obtain equality~\eqref{fpr32}. The lemma is proved.  
\end{proof}

\begin{lema}
\label{fprclaim5.4}
If $u \in \mathcal{D}_a^{1,p}(\mathbb{R}^N\backslash\{|y|=0\}) \cap C^1(\mathbb{R}^N\backslash\{|y|=0\}) \cap W^{2,1}_\mathrm{loc}(\mathbb{R}^N\backslash\{|y|=0\})$ is a weak solution to problem~\eqref{fprteorema3} 
for $q>1$ and $q \neq p^*(a,b)$, then $u \equiv 0$. 
\end{lema}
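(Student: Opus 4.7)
The plan is to apply the Pohozaev-type identity~\eqref{fpr32} established in Lemma~\ref{fprclaim5.3} to the specific nonlinearity appearing on the right-hand side of~\eqref{fprteorema3}. Accordingly, I set
\[
f(z,\xi) = \mu\,\frac{|\xi|^{p-2}\xi}{|y|^{p(a+1)}} + \frac{|\xi|^{q-2}\xi}{|y|^{bp^*(a,b)}} + \frac{|\xi|^{p^*(a,c)-2}\xi}{|y|^{cp^*(a,c)}},
\]
whose primitive in $\xi$ is
\[
F(z,\xi) = \frac{\mu}{p}\,\frac{|\xi|^p}{|y|^{p(a+1)}} + \frac{1}{q}\,\frac{|\xi|^q}{|y|^{bp^*(a,b)}} + \frac{1}{p^*(a,c)}\,\frac{|\xi|^{p^*(a,c)}}{|y|^{cp^*(a,c)}}.
\]
First I would verify the integrability hypotheses needed in Lemma~\ref{fprclaim5.3}. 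The terms with exponents $p$ and $p^*(a,c)$ are controlled globally by Maz'ya's inequality~\eqref{eq:maz} together with $u\in\mathcal{D}_a^{1,p}(\mathbb{R}^N\backslash\{|y|=0\})$; the term with exponent $q$ is precisely what the alternative hypotheses appearing in Theorem~\ref{teo:naoexistencia} (either $1<q<p^*(a,b)$, via interpolation against $L^p_{a+1}$ and $L^{p^*(a,b)}_b$; or $q>p^*(a,b)$ with $u\in L^q_{bp^*(a,b)/q,\operatorname{loc}}\cap L^\infty_{\operatorname{loc}}$) are designed to provide.

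The key algebraic step is to compute $\sum_{i=1}^N z_i\,\partial F/\partial z_i$. Since the weights $|y|^{-s}$ depend only on the $y$-variables, the derivatives in the $x_i$-directions vanish, while the Euler-type identity $\sum_{i=N-k+1}^{N} y_i\,\partial_{y_i}|y|^{-s} = -s|y|^{-s}$ yields
\[
\sum_{i=1}^N z_i\frac{\partial F}{\partial z_i}(z,u) = -\mu(a+1)\frac{|u|^p}{|y|^{p(a+1)}} - \frac{bp^*(a,b)}{q}\frac{|u|^q}{|y|^{bp^*(a,b)}} - c\,\frac{|u|^{p^*(a,c)}}{|y|^{cp^*(a,c)}}.
\]
Substituting $\xi f(z,\xi)$, $NF(z,\xi)$, and this last expression into~\eqref{fpr32} and grouping by integrand, direct inspection reveals the crucial cancellations: the coefficient of the $\mu$-integral is $(N-p(a+1))/p - N/p + (a+1) = 0$; the coefficient of the $p^*(a,c)$-integral is $(N-p(a+1))/p - N/p^*(a,c) + c$, which vanishes owing to the defining Maz'ya relation $p^*(a,c) = Np/(N-p(a+1-c))$; and the coefficient of the $q$-integral simplifies, using $N-bp^*(a,b) = p^*(a,b)(N-p(a+1))/p$, to
\[
\frac{N-p(a+1)}{p} - \frac{N-bp^*(a,b)}{q} = \frac{N-p(a+1)}{p}\left(1 - \frac{p^*(a,b)}{q}\right),
\]
which is nonzero exactly when $q\neq p^*(a,b)$.

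Consequently~\eqref{fpr32} collapses to
\[
\frac{N-p(a+1)}{p}\left(1-\frac{p^*(a,b)}{q}\right)\int_{\mathbb{R}^N}\frac{|u|^q}{|y|^{bp^*(a,b)}}\,dz = 0,
\]
and since the hypothesis $a < (k-p)/p \leqslant (N-p)/p$ guarantees $N-p(a+1)>0$, the prefactor is nonzero, forcing $\int |u|^q/|y|^{bp^*(a,b)}\,dz = 0$ and hence $u\equiv 0$ a.e.\ in $\mathbb{R}^N\backslash\{|y|=0\}$. The principal technical obstacle I expect is the verification of the global (not merely local) integrability of each of $uf(z,u)$, $F(z,u)$, and $\sum z_i\,\partial F/\partial z_i$ needed to legitimately apply Lemma~\ref{fprclaim5.3}; the term of exponent $q$ is the delicate one, and the two alternative integrability/regularity conditions listed in the theorem are exactly what make this step go through in each of the two regimes $1<q<p^*(a,b)$ and $q>p^*(a,b)$.
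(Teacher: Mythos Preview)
Your overall strategy---apply the Pohozaev identity of Lemma~\ref{fprclaim5.3} with the specific $f$ and $F$, compute $\sum z_i\,\partial F/\partial z_i$ via the Euler relation for $|y|^{-s}$, and observe that the $\mu$- and $p^*(a,c)$-coefficients vanish while the $q$-coefficient does not---is exactly the paper's, and your algebra is correct (your coefficient $\tfrac{N-p(a+1)}{p}\bigl(1-\tfrac{p^*(a,b)}{q}\bigr)$ equals the paper's $\bigl(\tfrac{1}{p^*(a,b)}-\tfrac{1}{q}\bigr)(N-bp^*(a,b))$).

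The gap is exactly where you flag it, but your proposed resolution does not close it. For $1<q<p^*(a,b)$, interpolating between $L^p_{a+1}$ and $L^{p^*(a,b)}_b$ yields $u\in L^q_\gamma(\mathbb{R}^N)$ with weight exponent $\gamma q = \theta q(a+1)+(1-\theta)qb$, which is not $bp^*(a,b)$ in general; for $q>p^*(a,b)$, the extra hypothesis $u\in L^q_{bp^*(a,b)/q,\operatorname{loc}}$ is only local. Lemma~\ref{fprclaim5.3} requires \emph{global} integrability of $|u|^q/|y|^{bp^*(a,b)}$. The paper obtains this not from the ambient function-space hypotheses but from the equation itself: test~\eqref{fprteorema3} with $\eta_\epsilon u$ (the radial cut-off from the proof of Lemma~\ref{fprclaim5.3}), and in the resulting identity bound the gradient term, the $\mu$-term, and the $p^*(a,c)$-term uniformly in $\epsilon$ using only the $\mathcal{D}_a^{1,p}$-norm together with Maz'ya's and H\"older's inequalities. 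This forces $\int_{\mathbb{R}^N}\eta_\epsilon\,|u|^q/|y|^{bp^*(a,b)}\,dz$ to be bounded uniformly in $\epsilon$, and Fatou (as $\eta_\epsilon\uparrow 1$) gives $u\in L^q_{bp^*(a,b)/q}(\mathbb{R}^N)$ globally. With this in hand, Lemma~\ref{fprclaim5.3} applies and your computation finishes the proof.
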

\begin{proof}
To use Lemma~\ref{fprclaim5.3} we need to prove that
$u \in L_{bp^*(a,b)/q}^q(\mathbb{R}^N)$. 
To do this, we use the test function 
$\eta_\epsilon u$ in the weak solution to problem~\eqref{fprteorema3}, where 
$\eta_\epsilon\in C_c^\infty(\mathbb{R}^N\backslash\{|y|=0\})$ is the same function defined in the proof of Lemma~\ref{fprclaim5.3}. Thus, we have
\begin{align}\label{primeiraigualdadedolemafprclaim5.4}
\int_{\mathbb{R}^N}\frac{|\nabla u|^{p-2}}{|y|^{ap}} \langle \nabla u, \nabla (\eta_\epsilon u) \rangle	 \,dz
-\mu \int_{\mathbb{R}^N} \frac{\eta_\epsilon |u|^p}{|y|^{p(a+1)}} \, dz
&= \int_{\mathbb{R}^N} \frac{\eta_\epsilon |u|^{p^*(a,c)}}{|y|^{cp^*(a,c)}} \, dz
+ \int_{\mathbb{R}^N} \frac{\eta_\epsilon |u|^q}{|y|^{bp^*(a,b)}} \, dz.
\end{align}

In what follows we estimate the first term on the right-hand side of equality~\eqref{primeiraigualdadedolemafprclaim5.4}, and we obtain
\begin{align*}
\left|\int_{\mathbb{R}^N} \frac{\eta_\epsilon |u|^{p^*(a,c)}}{|y|^{cp^*(a,c)}} \, dz\right|
& \leqslant \int_{\mathbb{R}^N} \frac{|u|^{p^*(a,c)}}{|y|^{cp^*(a,c)}} \, dz\\
& \leqslant K(N,p,\mu,a,c)^\frac{p^*(a,c)}{p}\left(\int_{\mathbb{R}^N} \frac{|\nabla u|^p}{|y|^{ap}} \, dz
	- \mu\int_{\mathbb{R}^N} \frac{|u|^{p}}{|y|^{p(a+1)}} \, dz\right)^\frac{p^*(a,c)}{p}.
\end{align*}
We also have
\begin{align*}
&\left|\int_{\mathbb{R}^N}\frac{|\nabla u|^{p-2}}{|y|^{ap}} \langle \nabla u, \nabla (\eta_\epsilon u) \rangle	 \,dz\right|\\
& \qquad \leqslant c + \left(\int_{\mathbb{R}^N}\frac{|\nabla u|^{p}}{|y|^{ap}}\,dz\right)^\frac{p-1}{p}
\left(\int_{\mathbb{R}^N} \frac{|u|^{p^*}}{|y|^{ap^*}} \, dz\right)^\frac{1}{p^*}
\left(\int_{\mathbb{R}^N}|\nabla \eta_\epsilon|^N\,dz\right)^\frac{1}{N},
\end{align*}
where we have applied Maz'ya's and H\"{o}lder's inequalities to obtain three factors that are finite and independent from $\epsilon$. 

To estimate the second term on the left-hand side of equality~\eqref{primeiraigualdadedolemafprclaim5.4}, 
we write
\begin{align*}
\left|\mu \int_{\mathbb{R}^N} \frac{\eta_\epsilon |u|^p}{|y|^{p(a+1)}} \, dz\right| 
&\leqslant K(N,p,\mu,a,a+1)\left(\int_{\mathbb{R}^N} \frac{|\nabla u|^p}{|y|^{ap}} \, dz
	- \mu\int_{\mathbb{R}^N} \frac{|u|^{p}}{|y|^{p(a+1)}} \, dz\right),
\end{align*}
which is finite because
$u\in\mathcal{D}_a^{1,p}(\mathbb{R}^N\backslash\{|y|=0\})$. 
Therefore, passing to the limit as 
$\epsilon \to 0$, we deduce that
$u \in L_{bp^*(a,b)/q}^q(\mathbb{R}^N)$. 

Now we can apply Lemma~\ref{fprclaim5.3}
to the functions 
\begin{align*}
f(z,\xi )= \mu\frac{|\xi |^{p-2}}{|y|^{p(a+1)}}\xi  + \frac{|\xi |^{p^*(a,c)-2}}{|y|^{cp^*(a,c)}}\xi  + \frac{|\xi |^{q-2}}{|y|^{bp^*(a,b)}}\xi 
\end{align*}
and
\begin{align*}
F(z,\xi )
 & =\frac{\mu}{p}\frac{|\xi |^{p}}{|y|^{p(a+1)}} + \frac{1}{p^*(a,c)}\frac{|\xi |^{p^*(a,c)}}{|y|^{cp^*(a,c)}} + \frac{1}{q}\frac{|\xi |^{q}}{|y|^{bp^*(a,b)}}.
\end{align*}
In this way, from equality~\eqref{fpr32} we deduce that
\begin{align*}
0
& = \left(\frac{N-p(a+1)}{p} - \frac{N}{p} + (a+1) \right)\mu\int_{\mathbb{R}^N} \frac{|\xi |^{p}}{|y|^{p(a+1)}} \, dz\\
& \qquad+\left(\dfrac{N-p(a+1)}{p} - \dfrac{N}{p^*(a,c)} + c \right)\int_{\mathbb{R}^N} \frac{|\xi |^{p^*(a,c)}}{|y|^{cp^*(a,c)}} \,dz\\
&\qquad +\left(\dfrac{N-p(a+1)}{p} -\dfrac{N}{q} + \dfrac{bp^*(a,b)}{q} \right)\int_{\mathbb{R}^N} \frac{|\xi |^{q}}{|y|^{bp^*(a,b)}} \,dz\\
& = \bigg(\dfrac{1}{p^*(a,b)}-\dfrac{1}{q}\bigg)(N-bp^*(a,b)) \int_{\mathbb{R}^N} \frac{|\xi |^{q}}{|y|^{bp^*(a,b)}} \, dz.
\end{align*}
By hypothesis we have $q \neq p^*(a,b)$; 
moreover, $a < (N-p)/p$ implies that $(N-bp^*(a,b)) \neq 0 $. 
Hence,
\( \| \xi  \|\sb{L_{bp^*(a,b)/q}^q(\mathbb{R}^N)}\sp{q} = 0 \), that is, \( \xi  \equiv 0 \). 
This concludes the proof of the lemma. 
\end{proof}
\begin{lema}\label{fprclaim5.5}
Let $u \in \mathcal{D}_{a}^{1,p}(\mathbb{R}^N\backslash\{|y|=0\})$ 
be a weak solution to problem~\eqref{fprteorema3}, 
where $1<q<p^*(a,b)$. Then
$
u \in \mathcal{D}_a^{1,p}(\mathbb{R}^N\backslash\{|y|=0\}) \cap C^1(\mathbb{R}^N\backslash\{|y|=0\}) \cap W^{2,1}_\mathrm{loc}(\mathbb{R}^N\backslash\{|y|=0\}).
$
\end{lema}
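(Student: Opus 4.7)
The plan is to establish the three regularity properties in sequence, exploiting the fact that, on any compact subset of $\mathbb{R}^N\setminus\{|y|=0\}$, the singular weights $|y|^{-ap}$, $|y|^{-p(a+1)}$, $|y|^{-bp^*(a,b)}$, and $|y|^{-cp^*(a,c)}$ are smooth and bounded above and below by positive constants, so that equation~\eqref{fprteorema3} reduces locally to a quasilinear equation of $p$-Laplacian type with smooth coefficients and bounded right-hand side (once $L^\infty$ bounds are in hand), to which the classical interior regularity theory applies.

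First I would prove that $u\in L^\infty_{\mathrm{loc}}(\mathbb{R}^N\setminus\{|y|=0\})$ by adapting the Moser iteration developed in Section~\ref{regularidadesecao1} and used in the proof of Theorem~\ref{teo:reg}. The critical term $|u|^{p^*(a,c)-2}u/|y|^{cp^*(a,c)}$ is handled exactly as in that proof, by an absorption argument on small balls as in inequality~\eqref{fpr2.26} based on Maz'ya's inequality~\eqref{eq:maz}. The subcritical term $|u|^{q-2}u/|y|^{bp^*(a,b)}$ with $1<q<p^*(a,b)$ is actually easier: interpolating between the $L^p_{a+1}$ and $L^{p^*(a,b)}_b$ bounds (both guaranteed by Maz'ya's inequality) and applying H\"older's inequality on the right-hand side of the analogue of~\eqref{2.23} allows the corresponding term to be absorbed into the left-hand side. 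The inductive step of Lemma~\ref{iteracao} then goes through with obvious modifications, and the Moser scheme of Section~\ref{regularidadesecao2} yields the desired $L^\infty_{\mathrm{loc}}$ bound.

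Next I would prove $u\in C^1(\mathbb{R}^N\setminus\{|y|=0\})$. Fix a compact set $K\Subset\mathbb{R}^N\setminus\{|y|=0\}$, so that $\delta\leqslant|y|\leqslant\Delta$ on $K$ for some $0<\delta<\Delta$. In a neighborhood of $K$ equation~\eqref{fprteorema3} takes the form
\begin{align*}
-\operatorname{div}\left(A(z)|\nabla u|^{p-2}\nabla u\right)=g(z,u(z)),
\end{align*}
where $A(z)=|y|^{-ap}\in C^\infty$ is bounded above and below by positive constants and the right-hand side $g(\cdot,u(\cdot))\in L^\infty_{\mathrm{loc}}$ thanks to the local boundedness of $u$ and the boundedness of the remaining weights on $K$. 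The classical $C^{1,\alpha}_{\mathrm{loc}}$ regularity theory of DiBenedetto, Tolksdorf, and Lieberman for quasilinear equations of $p$-Laplacian type with smooth coefficients and bounded right-hand side then yields $u\in C^{1,\alpha}_{\mathrm{loc}}(\mathbb{R}^N\setminus\{|y|=0\})$ for some $\alpha\in(0,1)$.

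Finally, the $W^{2,1}_{\mathrm{loc}}$ regularity follows from the second-order estimates for quasilinear equations once $C^{1,\alpha}$ is in hand: for $p\geqslant 2$ one has the stronger $W^{2,2}_{\mathrm{loc}}$ bounds, while for $1<p<2$ Tolksdorf's estimates furnish $|\nabla u|^{(p-2)/2}\nabla u\in W^{1,2}_{\mathrm{loc}}$, which combined with the local boundedness of $\nabla u$ produces second distributional derivatives in $L^1_{\mathrm{loc}}$. I expect the main obstacle to be the $L^\infty_{\mathrm{loc}}$ step, because the critical nonlinearity must still be controlled by the delicate absorption on small balls used in Theorem~\ref{teo:reg} and because the initial integrability $u\in L^{p^*(a,b)}_b\cap L^{p^*(a,c)}_c$ coming from Maz'ya's inequality is essential to start the Moser iteration; once $L^\infty_{\mathrm{loc}}$ is secured, the $C^1$ and $W^{2,1}_{\mathrm{loc}}$ conclusions become essentially a black-box application of the classical theory on compacta where the cylindrical weights reduce to smooth bounded coefficients.
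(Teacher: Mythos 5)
Your proposal follows essentially the same route as the paper: first obtain $u\in L^\infty_{\mathrm{loc}}$ via the Moser iteration of Theorem~\ref{teo:reg} (the paper simply invokes that theorem after bounding the right-hand side locally, whereas you spell out the adaptation needed for the subcritical term $|u|^{q-2}u/|y|^{bp^*(a,b)}$, which is a welcome extra precision since Theorem~\ref{teo:reg} is stated for a slightly different nonlinearity), and then apply the classical interior regularity theory for quasilinear $p$-Laplacian-type equations (the paper cites Tolksdorf) on compacta where the cylindrical weights are smooth and bounded. Both the approach and its execution match the paper's proof.
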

\begin{proof}
We begin by writing problem~\eqref{fprteorema3} in the form
$$-\operatorname{div}\left[\frac{|\nabla u|^{p-2}}{|y|^{ap}}\nabla u\right]=f(x,u),
$$
where
$$
f(x,u) = \mu\frac{|u|^{p-2}u}{|y|^{p(a+1)}}
+\frac{|u|^{p^*(a,c)-2}u}{|y|^{cp^*(a,c)}}+\frac{|u|^{q-2}u}{|y|^{bp^*(a,b)}}.
$$
Hence, for every subset
$\omega \Subset \mathbb{R}^N\backslash\{|y|=0\}$ 
there exists a positive constant 
$C(\omega) > 0$ such that 
$$|f(x,u)| 
\leqslant C(\omega)\left(\mu\frac{|u|^{p-2}u}{|y|^{p(a+1)}}+\frac{|u|^{q-2}u}{|y|^{bp^*(a,b)}}\right).
$$
In this way, from Theorem~\ref{teo:reg} it follows that
$u \in L_{\mathrm{loc}}^\infty(\mathbb{R}^N\backslash\{|y|=0\})$. 
Using a result by 
Tolksdorf~\cite{MR727034}, 
we deduce that 
$u\in C^1(\mathbb{R}^N\backslash\{|y|=0\}) \cap W^{2,1}_\mathrm{loc}(\mathbb{R}^N\backslash\{|y|=0\})$.
The lema is proved.
\end{proof}

\begin{proof}[Proof of Theorem~\ref{teo:naoexistencia}]
In the case $1<q< p^*(a,b)$ the proof follows immediately from 
Lemmas~\ref{fprclaim5.4} and~\ref{fprclaim5.5}. 
In the case $q > p^*(a,b)$ we cannot  apply directly 
Lemma~\ref{fprclaim5.5} because the conclusion of
Theorem~\ref{teo:reg} is not valid. To overcome this difficulty, we suppose additionally that
$u \in
L\sb{bp^*(a,b)/q, \operatorname{loc}}\sp{q}
(\mathbb{R}^N\backslash\{|y|=0\}) 
\cap
 L_{\mathrm{loc}}^\infty(\mathbb{R}^N\backslash\{|y|=0\})$; the conclusion follows. 
\end{proof}

\bibliographystyle{abbrv}

\bibliography{bibtese}

\end{document}